\tikzstyle cross=[preaction={draw=white, -, line width=6pt}]
\tikzstyle normal=[thick]
                        \theoremstyle{plain}
\newtheorem{theorem}{Theorem}[section]
\newtheorem{lemma}[theorem]{Lemma}
\newtheorem{coro}[theorem]{Corollary}
\newtheorem{proposition}[theorem]{Proposition}
\newtheorem{prop}[theorem]{Proposition}
\newtheorem*{oquestion}{Open Question}
\newtheorem{claim}[theorem]{Claim}
\newtheorem{defn}[theorem]{Definition}
\theoremstyle{definition}
\newtheorem{rmk}[theorem]{Remark}
\newtheorem{example}[theorem]{Example}
\theoremstyle{definition}
\def\BC{\mathbb C}
\def\BN{\mathbb N}
\def\BZ{\mathbb Z}
\def\BQ{\mathbb Q}
\def\CB{\mathcal B}
\def\CG{\mathcal G}
\def\CH{\mathcal H}
\def\CM{\mathcal M}
\def\CR{\mathcal R}
\def\CU{\mathcal U}
\def\Id{\mathrm{Id}}
\def\fS{\mathfrak S}
\def\be { \begin{equation} }
\def\ee { \end{equation} }
\def\bpm { \begin{pmatrix} }
\def\epm { \end{pmatrix} }
\newcommand{\slt}{{\mathfrak{sl}(2)}}
\newcommand{\Uq}{{U_q\slt}}
\newcommand{\UqhL}{{U^{\frac{L}{2}}_q\slt}}
\newcommand{\End}{\operatorname{End}}
\newcommand{\Hom}{\operatorname{Hom}}
\newcommand{\RR}{\operatorname{R}}
\newcommand{\perm}{\operatorname{perm}}
\newcommand{\Quant}{\operatorname{Quant}}
\newcommand{\Gassner}{\operatorname{Gassner}}
\newcommand{\Vectcat}{\mathcal{V}ect}
\newcommand{\Bn}{\mathcal{B}_n}
\newcommand{\PBn}{\mathcal{PB}_n}
\newcommand{\PB}{\mathcal{PB}}
\newcommand{\Sn}{\mathfrak{S}_n}
\newcommand{\Sk}{\mathfrak{S}}
\newcommand{\Mod}{\mathop{Mod}}
\newcommand{\Forget}{\mathop{Forget}}
\newcommand{\Aut}{\mathop{Aut}}
\newcommand{\Ker}{\mathop{Ker}}
\newcommand{\Vect}{\mathop{Span}}
\newcommand{\qbin}[2]{\left[\begin{array}{c}
      #1 \\
      #2 \end{array}\right]}
\newcommand{\bapp}{\left. \begin{array}{rcl}}
\newcommand{\eapp}{\end{array} \right.}
\newcommand{\bfct}{\left\lbrace \begin{array}{rcl}}
\newcommand{\efct}{\end{array} \right.}
\newcommand{\Habs}{\operatorname{\CH}^{\text{abs}}}
\newcommand{\Hrelm}{\operatorname{\CH}^{\text{rel }-}}
\newcommand{\Laurent}{\CR}
\newcommand{\Enm}{E_{n,m}}
\title{Colored version for Lawrence representations}
\author{Jules Martel}
\date{}
\begin{document}

\maketitle

\begin{abstract}
We give an explicit isomorphism between the Gassner representation and the first weight level of a representation of quantum $\slt$. Then we construct and provide matrices for colored versions of the BKL representation and higher Lawrence's representations. 
\end{abstract}

\tableofcontents

\section{Introduction}


\begin{defn}[Braid groups, Pure braid groups]\label{braidgroupdef}
Let $n\in \BN$, and let $D_n$ be the unit disk with $n$ punctures. It consists in the unit disk with $n$ points denoted $p_1 , \ldots , p_n$ (considered inside and to lie on the real line) removed. The {\em braid group} on $n$ strands is the mapping class group of the punctured disk $D_n$.
\[
\Bn = \Mod(D_n).
\]
It is a group generated by $n-1$ elements satisfying the so called {\em ``braid relations"}:
$$\Bn = \left\langle \sigma_1,\ldots,\sigma_{n-1} \Big| \begin{array}{ll} \sigma_i \sigma_j = \sigma_j \sigma_i & \text{ if } |i-j| \le 2 \\ 
\sigma_i \sigma_{i+1} \sigma_i = \sigma_{i+1} \sigma_i \sigma_{i+1} & \text{ for } i=1,\ldots, n-2 \end{array} \right\rangle$$
where the generator $\sigma_i$ corresponds to the isotopy class of the half Dehn twist swapping $p_i$ and $p_{i+1}$.

The {\em pure braid group} on $n$ strands $\PBn$ consists in the {\em pure mapping class group} of $D_n$, namely mapping classes of homeomorphisms fixing punctures pointwise. Let $\perm : \Bn \to \Sk_n$ be the morphism assigning the permutation it involves to a given braid, then $\PBn$ is the kernel of $\perm$. 
\end{defn}

\begin{prop}[{\cite[Lemma~1.8.2]{Bir}}]
There exists a family of elements denoted $A_{i,j}$ for $1\le i < j \le n$ generating the pure braid group $\PBn$. Their expressions in terms of generators of $\Bn$ is provided in \cite[Lemma~1.8.2]{Bir}. 
\end{prop}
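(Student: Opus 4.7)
The plan is to argue by induction on $n$ using the Fadell--Neuwirth fibration. Forgetting the last puncture of $D_n$ gives a locally trivial fibration of ordered configuration spaces whose fiber is homotopy equivalent to $D_{n-1}$. The long exact sequence of homotopy groups restricts, on pure braid groups, to a split short exact sequence
\[
1 \to \pi_1(D_{n-1}, p_n) \to \PBn \to \PB_{n-1} \to 1,
\]
where $\pi_1(D_{n-1},p_n)$ is free of rank $n-1$, with a free basis given by small loops around each puncture $p_1,\ldots,p_{n-1}$ based at $p_n$.

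The base case $n=2$ is immediate: $\PB_2 \cong \BZ$ is generated by $A_{1,2} = \sigma_1^2$. For the inductive step, assume $\PB_{n-1}$ is generated by $\{A_{i,j}\}_{1\le i<j\le n-1}$; choose a set-theoretic section of the quotient and lift these elements to $\PBn$. Identifying the loop in $D_{n-1}$ that starts at $p_n$, runs once around $p_i$ passing behind the intermediate punctures, and returns with the pure braid $A_{i,n}$ provides the missing free generators $\{A_{i,n}\}_{1\le i\le n-1}$ of the kernel. Since lifts of a generating set of the quotient together with a generating set of the kernel always generate the middle group of a short exact sequence, the full family $\{A_{i,j}\}_{1\le i<j\le n}$ generates $\PBn$.

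For the explicit expressions in terms of the $\sigma_k$, one writes
\[
A_{i,j} = \sigma_{j-1} \sigma_{j-2} \cdots \sigma_{i+1}\, \sigma_i^{\,2}\, \sigma_{i+1}^{-1} \cdots \sigma_{j-2}^{-1} \sigma_{j-1}^{-1}.
\]
This is verified by drawing the braid diagram: the conjugating braid $\sigma_{j-1}\cdots\sigma_{i+1}$ brings strand $j$ adjacent to strand $i$, the middle factor $\sigma_i^{\,2}$ performs one full twist linking those two strands, and the inverse conjugation restores the original endpoint positions so that the resulting braid is pure and links only strands $i$ and $j$.

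The main obstacle is geometric rather than algebraic: matching the loops $A_{i,n}$ in the fiber with concrete braid words, and verifying carefully that they form a free generating set of $\pi_1(D_{n-1},p_n)$ compatible with the chosen lifts from $\PB_{n-1}$. Once this picture-level identification is in place, the surjectivity statement follows formally from the exact sequence, and the word formulas can be read off from the braid diagrams; the bookkeeping, carried out in detail in Birman's reference, is the only source of difficulty.
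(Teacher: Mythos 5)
Your argument is correct and is essentially the proof of the cited reference: the paper itself gives no proof beyond pointing to \cite[Lemma~1.8.2]{Bir}, whose argument is exactly this induction on the split Fadell--Neuwirth sequence $1 \to F_{n-1} \to \PBn \to \PB_{n-1} \to 1$ (which the paper also recalls in Section~\ref{Gassnerrep}), and your word formula for $A_{i,j}$ agrees with Birman's. The only point left implicit is that injectivity of $F_{n-1}\to\PBn$ uses asphericity of the configuration spaces of the disk, but that is standard.
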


In \cite{Law}, R. Lawrence constructs homological representations for the braid groups using the fact that they act by homeomorphisms on punctured disks. 

\begin{defn}[Configuration space of the punctured disk]\label{configspaceofthepunctureddisk0}
Let $n,m$ be integers. The configuration space $C_{n,m}$ of $m$ unordered points in $D_n$ is defined as follows:
\[
C_{n,m} = \{ (z_1, \ldots, z_m) \in (D_n)^m \text{ s.t. } z_i \neq z_j \text{ for }  i \neq j\} / \fS_m
\] 
\end{defn}

Then the construction uses a local system:
\begin{equation}\label{Z2}
L_m: \pi_1(C_{n,m}) \to \BZ^2 
\end{equation}
that turns homology groups $H_m(C_{n,m})$ into modules over the ring of Laurent polynomials in two variables. The braid group $\Bn$ acts on $H_m(C_{n,m})$ and its action commutes with the action of Laurent polynomials. This provides a graded family of Lawrence representations for the braid groups (graded by $m \in \BN$). See \cite[Section~3.1]{Itogarside}. 

The notoriety of Lawrence's representations comes from independent work of S. Bigelow and D. Krammer (\cite{Big1}, \cite{Kra}) showing the faithfulness of braid representations at the second level of the grading ($m=2$), the one we refer to as the {\em BKL representation}. It is the first known linear and finite dimensional representation of the braid groups. 

It was proved that Lawrence's representations can be recovered as submodules of some quantum representations of quantum $\slt$, $\Uq$. In \cite{JK} for the case of BKL representation, and in \cite{Koh} for all the other level of the grading but the isomorphism does not hold on the ring of Laurent polynomials, but only generically when variables are specialized to complex values (this result is summed up in \cite[Theorem~4.5]{Itogarside}). In \cite{Jules1} Lawrence's representations are extended to relative homology modules; this allows to recover the braid group representations on the whole (tensor product of) quantum Verma modules, while over the ring ot Laurent polynomials. This result recovers Kohno's theorem (\cite{Koh}) allowing to remove genericity conditions. A first identification with quantum representation was provided by Zinno in \cite{Zi}, where he identifies the BKL representation with a quantum algebraic object, namely the quotient of the Birman-Wenzl-Murakami algebra. In \cite{MW}, they recover BKL representation as a submodule of some representations of quantum $\mathfrak{sl}(2 | 1)$. 

The goal of the present paper is to provide a concrete approach to the {\em colored Lawrence representations} by providing explicit matrices for the action of the generators of the braid groups. By colored we mean versions with $n+1$ variables instead of two. This is achieved by modifying Morphism (\ref{Z2}) by changing the target group to $\BZ^{n+1}$. These colored Lawrence representations are also defined and treated in \cite{Jules1,Kohmulti}.

The first level of Lawrence representations is known to be the Burau representation (see \cite{JK} for instance), whose colored version is called the {\em Gassner representation}, see \cite{B-N,Mo}. In Section \ref{Gassnertoquantum} we present these representations from the Magnus representations point of view (involving Fox derivatives), we provide matrices obtained from Burau matrices, and we relate them to quantum representations of braid groups arising from quantum algebra $\Uq$. 

In Section \ref{cBKL} we construct a colored version for BKL representations (level two of the grading) using the methods and basis provided in \cite{Big1}. We compute matrices in this context, using Fox calculus and Bigelow's forks--noodles pairing. 

Section \ref{lapartiereine} gives a panorama on different bases for Lawrence representations and the relations with quantum representations. We then compute matrices for colored higher Lawrence representations ($m \ge 2$) using homology techniques developed in \cite{Jules1}. We provide a computation on an integral basis, namely a basis of the entire homology module (which is not the case of the forks' basis introduced by Bigelow). This basis differs from that used in Section \ref{cBKL}, and as explained in Section \ref{lapartiereine}, it has better properties which allow to match the action of the braid group to the quantum representations while working over the ring of Laurent polynomials, see \cite[Theorem~3]{Jules1}. In section \ref{Concretecase3strands} we explain how to operate the change of basis in the case $n=3$, the general case is analogue but the notation is just cumbersome.  

The Appendix (Section \ref{pureandcolored}) explains why passing to colored representations involves restriction to the pure braid group, and suggests point of views on how to consider representations of the whole braid group or representations of the {\em colored braid groupoid}. \\

{\bf Acknowledgment} This work was achieved during the PhD of the author that was held in the {\em Institut de Mathématiques de Toulouse}, in {\em Université Paul Sabatier, Toulouse 3}. The author thanks very much his advisor Francesco Costantino for suggesting this problem, and for fruitful remarks that led to this paper.  The author is also very grateful to Emmanuel Wagner for his valuable comments and advises about this work. 

\section{Gassner representation recovered by $\Uq$ representations.}\label{Gassnertoquantum}
%

In this section we present Gassner representations in Section \ref{Gassnerrep}, then some quantum representations of the braid groups in Section \ref{quantumrep}. Finally we show how to recover one from the other in Section \ref{Gassnerfromquantum}.

\subsection{Gassner representations}\label{Gassnerrep}

In this section we provide a survey about the Gassner representation, that is a ``colored version'' of the Burau representation. By colored, we mean with several variables, such that if one specializes all variables to the same one, it recovers the Burau representation. They both arises from the family of Magnus representations, defined from a representation of the braid group in the automorphism group of the free group, using {\em Fox free differential calculus}. We introduce this family following \cite{Bir}. 

\subsubsection{Magnus representations}

\begin{defn}[Fox free differential calculus]\label{Foxcalculus}
For each $j=1,\ldots ,n$ there is a map:
\[
\frac{\partial}{\partial x_j} : \BZ F_n \to \BZ F_n
\]
given by:
\[
\frac{\partial}{\partial x_j}\left( x_{\mu_1}^{\epsilon_1} \cdots x_{\mu_r}^{\epsilon_r}  \right) = \sum_{i=1}^r \epsilon_i \delta_{\mu_i,j} x_{\mu_1}^{\epsilon_1} \cdots x_{\mu_i}^{(\epsilon_i-1)/2},
\]
and 
\[
\frac{\partial}{\partial x_j}\left( \sum a_g g  \right) = \sum a_g \frac{\partial}{\partial x_j}\left( g  \right) , \text{ } g \in F_n \text{ } a_g \in \BZ,
\]
where $\epsilon_i = \pm 1$, $\delta$ is the Kronecker symbol, and $\BZ F_n$ is the the group ring of $F_n$.  
\end{defn}

Let $\Phi$ be a homomorphism acting on $F_n$ and $A_{\Phi}$ be any group of automorphisms of $F_n$ satisfying:
\[
\Phi(x) = \Phi(a(x))
\]
for each $x \in F_n$ and $a \in A_{\Phi}$. 

\begin{defn}[Magnus representation, {\cite[Theorem~3.9]{Bir}}]\label{Magnusrep}
Let $a \in A_{\Phi}$ and $\left[a \right]^{\Phi}$ be the following $n\times n$ matrix:
\[
\left[ a \right]^{\Phi} = \left[ \Phi\left( \frac{\partial (a(x_i))}{\partial x_j}  \right) \right]_{i,j} .
\]
Then the morphism:
\[
\bfct
A_{\Phi} & \to & \CM \left( n, \BZ F_n \right)\\
a & \mapsto & {\left[ a \right]}^{\Phi}
\efct
\]
is a well defined group homomorphism, called a Magnus representation. 
\end{defn}

Let $Z_n$ be the free abelian group of rank $n$ with free basis $t_1,\ldots ,t_n$ and $\mathfrak{a}$ be the following morphism:
\[
\mathfrak{a} :
\bfct
F_n & \to & Z_n \\
x_i & \mapsto & t_i
\efct .
\]

\begin{defn}[Gassner representation of the pure braid group]\label{GassnerfromMagnus}
Let $1 \le r < s \le n$ and $A_{r,s} \in \PBn$ the corresponding generator of the pure braid group on $n$ strands. Let $\left[ A_{r,s} \right]$ be the following matrix:
\[
\left[ A_{r,s} \right]^{\mathfrak{a}} = \left[ \mathfrak{a}\left( \frac{\partial (\widetilde{A_{r,s}}(x_i))}{\partial x_j}  \right) \right]_{i,j} . 
\]
Then the morphism:
\[
\bapp
\PBn & \to & \CM \left( n , \BZ \lbrack Z_n \rbrack \right) \\
A_{r,s} & \mapsto & \left[ A_{r,s} \right]^{\mathfrak{a}}
\eapp
\]
is a Magnus representation, called the Gassner representation of the pure braid group. 
\end{defn}

\begin{lemma}[{\cite[Lemma~3.11.1]{Bir}}]\label{Gassnerreducible}
The Gassner representation is reducible to an $(n-1) \times (n-1)$ representation. 
\end{lemma}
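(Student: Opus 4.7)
The plan is to exhibit a vector fixed by every Gassner matrix $[A_{r,s}]^{\mathfrak{a}}$, splitting off a trivial one-dimensional sub-representation and leaving the reduced $(n-1)$-dimensional one as a quotient. Two ingredients will go in: the fundamental identity of Fox's free differential calculus, and the observation that pure braids preserve each meridian generator $x_i$ up to conjugation.

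First I would recall (and derive by induction on word length from Definition~\ref{Foxcalculus}) the fundamental formula of Fox calculus,
$$
\sum_{j=1}^n \frac{\partial w}{\partial x_j}\,(x_j - 1) \;=\; w - 1 \quad \text{in } \BZ F_n, \text{ for every } w \in F_n.
$$
Next I would invoke the geometric fact that any pure braid $\widetilde{A_{r,s}} \in \Aut(F_n)$, acting on $\pi_1(D_n) = F_n$ with basepoint on $\partial D_n$, sends each meridian $x_i$ to a conjugate $w_{i}\, x_i\, w_{i}^{-1}$ of itself --- a defining feature of the pure braid subgroup inside $\Aut(F_n)$. After abelianization this yields $\mathfrak{a}(\widetilde{A_{r,s}}(x_i)) = t_i$ for every $i$. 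Plugging $w = \widetilde{A_{r,s}}(x_i)$ into the fundamental formula and applying $\mathfrak{a}$ then gives, for each $i$,
$$
\sum_{j=1}^n \mathfrak{a}\!\left( \frac{\partial \widetilde{A_{r,s}}(x_i)}{\partial x_j} \right)(t_j - 1) \;=\; t_i - 1,
$$
which is, by Definition~\ref{GassnerfromMagnus}, exactly the statement that $v := (t_1 - 1, \ldots, t_n - 1)$ is a common fixed vector for all Gassner matrices. The cyclic sub-module it spans is therefore invariant under the whole representation, and the induced action on the quotient is the promised $(n-1)$-dimensional representation.

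The main subtle point I expect to address carefully is the precise meaning of \emph{reducible to an $(n-1)\times(n-1)$ representation} at the level of modules over $\BZ[Z_n]$: because the entries of $v$ are non-units, $v$ cannot be completed to a basis of $\BZ[Z_n]^n$ over the whole ring. I would either work over a suitable localization (for instance inverting $t_1 - 1$), where $v$ extends to a basis and the reduced representation appears as a genuine $(n-1)\times(n-1)$ block of a triangularized matrix; or, following Birman, fix an explicit complementary free sub-module of rank $n-1$ (e.g.\ the span of $e_2 - e_1,\ldots, e_n - e_1$) and write out the reduced action there.
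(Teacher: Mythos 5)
Your key computation is correct, and your route is genuinely different from the paper's: the paper's sketch changes the free generating set to $g_i=x_1\cdots x_i$ and observes that in the corresponding basis every Gassner matrix has last row and column equal to $(0,\ldots,0,1)$, whereas you produce the common fixed column vector $v=(t_1-1,\ldots,t_n-1)$ from the Fox fundamental identity together with the fact that a pure braid sends each meridian $x_i$ to a conjugate of itself. Both facts are true, and either one exhibits reducibility in the loose sense of ``there is a proper invariant submodule.''

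However, the last step --- actually extracting an $(n-1)\times(n-1)$ representation over $\BZ[Z_n]$ --- has a real gap which you flag but do not close. The entries of $v$ generate the augmentation ideal, which is proper and non-principal, so $v$ is not unimodular: the quotient $\BZ[Z_n]^n/\BZ[Z_n]v$ is not free, and $\BZ[Z_n]v$ admits no free complement at all (invariant or not), since a splitting would produce a functional taking the value $1$ on $v$. Localizing at $t_1-1$ proves a statement over a larger ring, not the lemma as stated; and the complement you suggest, $\operatorname{Span}(e_2-e_1,\ldots,e_n-e_1)=\ker(1,\ldots,1)$, is not invariant, because $(1,\ldots,1)$ is not the fixed row vector. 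The correct fixed covector is $u=(1,\,t_1,\,t_1t_2,\,\ldots,\,t_1\cdots t_{n-1})$: apply the Fox product rule to the identity $a(x_1\cdots x_n)=x_1\cdots x_n$ (pure braids fix the boundary word) and abelianize to get $\sum_i t_1\cdots t_{i-1}\,\mathfrak{a}\bigl(\partial a(x_i)/\partial x_j\bigr)=t_1\cdots t_{j-1}$, i.e.\ $uM=u$. Since $u_1=1$ is a unit, $u$ is unimodular, $\ker(x\mapsto ux)$ is a free invariant direct summand of rank $n-1$, and restricting to it yields the reduced representation. Note that this repaired argument essentially reproduces the paper's: $u$ is precisely the abelianized Jacobian row corresponding to $g_n=x_1\cdots x_n$ in Birman's change of generators, so the two proofs converge at that point.
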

\begin{proof}[Sketch of proof]
Let $g_i = x_1 \cdots x_i \in F_n$, this provides a change of generator basis for $F_n$. The matrices:
\[
\left[ \mathfrak{a}\left( \frac{\partial (\widetilde{A_{r,s}}(g_i))}{\partial g_j}  \right) \right]_{i,j}
\]
correspond to Gassner matrices given in another basis associated to the $g_i$'s. After computation one remarks that the last rows and columns for all these matrices is $(0,\ldots,1)$ so that it can be deleted. 
\end{proof}

\begin{rmk}
Let $t=t_1= \cdots = t_n$, then the Gassner representation becomes the Burau representation. See \cite[Section~3.3]{Bir}. 
\end{rmk}

\subsubsection{Matrices}\label{matricesforGassner}

Now we give a concrete definition of the Gassner representation, with concrete matrices. They were first defined in Definition \ref{GassnerfromMagnus}, but we follow \cite{B-N} from now on to obtain matrices. 
In \cite{B-N}, the Gassner representation is built as a ``multi-color" Burau one. Let $t$ be a formal variable and $U_{n,i}(t)$ be the standard Burau matrix associated to $\sigma_i$, the $i$'th standard generator of $\Bn$. It consists in an $n\times n$ identity matrix where one replaces the $2\times 2$ block obtained with the $i$'th and $i+1$'th rows and columns by the standard block:
\[\begin{pmatrix} 1-t && 1 \\ t && 0 \end{pmatrix}. \]
The following definition for a multivariable Burau representation is due to Morton.

\begin{defn}[\cite{B-N,Mo}]
Let $b=\prod_{\alpha=1}^{k} \sigma_{i_{\alpha}}^{s_{\alpha}}$ be a braid written as a product of standard generators. Let $\Gamma$ be the following product of matrices:
\[\Gamma(b) = \prod_{\alpha = 1}^k U_{n,i_\alpha}(t_{j_{\alpha}})^{s_{\alpha}}\]
where $j_{\alpha}$ is the index of the ``over passing'' strand at the $\# \alpha$ crossing, and $t_1,\ldots,t_n$ are set to be formal variables.
\end{defn}

\begin{prop}[\cite{B-N}]
The map:
\[
\Gamma: \bapp
\Bn & \to & \CM_n(\BZ \left[ t_i^{\pm 1} \right]_{i = 1, \ldots ,n}) \\
b & \mapsto & \Gamma(b)
\eapp
\]
is well defined. 
\end{prop}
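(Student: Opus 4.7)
The plan is to show that $\Gamma$ descends to a well-defined map on $\Bn$ by verifying that it respects the defining relations in the standard presentation of $\Bn$ from Definition \ref{braidgroupdef}. The subtle point here, absent from the classical Burau case, is that the variable $t_{j_\alpha}$ attached to the $\alpha$-th crossing depends on which strand is overpassing, and this depends on the braid word, not only on the underlying braid. Thus one has to track how the indices $j_\alpha$ transform when one applies a braid relation.

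First I would make the combinatorial observation that, for any word $b=\prod\sigma_{i_\alpha}^{s_\alpha}$, the indices $j_\alpha$ are unambiguously determined by reading the strand labels from one end of the braid diagram and tracking the permutation crossing by crossing. This makes $\Gamma(b)$ a well-defined matrix for each given word; one must then check that two words representing the same braid produce the same matrix.

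Next I would dispatch the easy relations. For $\sigma_i\sigma_j=\sigma_j\sigma_i$ with $|i-j|\geq 2$, the two crossings involve disjoint pairs of positions, so the overpassing strand at the $\sigma_i$-crossing (respectively the $\sigma_j$-crossing) is the same in both orderings: one gets the indices $t_i$ and $t_j$ on both sides. Moreover $U_{n,i}(t_i)$ and $U_{n,j}(t_j)$ act non-trivially on disjoint pairs of rows and columns, so they commute and the matrices associated to the two words agree. The trivial cancellation $\sigma_i\sigma_i^{-1}=1$ is immediate, since the overpassing strand is the same at the two canceling crossings and $U_{n,i}(t_j)U_{n,i}(t_j)^{-1}=I$.

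The main work is the braid relation $\sigma_i\sigma_{i+1}\sigma_i=\sigma_{i+1}\sigma_i\sigma_{i+1}$. Tracking the permutation of strand labels step by step, one finds that starting from the trivial permutation, the left-hand word produces the sequence of overpassing-strand indices $(i,i,i+1)$ while the right-hand word produces $(i+1,i,i)$, yielding
\[
\Gamma(\sigma_i\sigma_{i+1}\sigma_i)=U_{n,i}(t_i)\,U_{n,i+1}(t_i)\,U_{n,i}(t_{i+1})
\]
versus
\[
\Gamma(\sigma_{i+1}\sigma_i\sigma_{i+1})=U_{n,i+1}(t_{i+1})\,U_{n,i}(t_i)\,U_{n,i+1}(t_i).
\]
It then suffices to check this equality as an identity of $3\times 3$ blocks (with all other rows/columns being identity), which is a short direct computation. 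The main obstacle I expect is precisely this bookkeeping: the colored braid identity is a genuine strengthening of the monochromatic Burau relation, and the identity succeeds only because the same variable $t_i$ (resp.\ $t_{i+1}$) appears twice on each side in exactly the pattern dictated by which strand is doubly overpassing through the triple crossing. Once these three relations are verified, $\Gamma$ is constant on equivalence classes of braid words, hence descends to a well-defined map on $\Bn$.
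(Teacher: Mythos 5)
The paper itself gives no proof of this proposition: it is stated with a citation to [B-N] and the verification is deferred to that reference. Your argument is the standard one and it is correct. The reduction to checking the defining relations (plus free cancellation) is the right strategy, the bookkeeping of the overpassing indices $(i,i,i+1)$ versus $(i+1,i,i)$ for the two sides of the braid relation is consistent with the convention in which the strand entering position $i$ passes over, and the resulting matrix identity
\[
U_{n,i}(a)\,U_{n,i+1}(a)\,U_{n,i}(b) \;=\; U_{n,i+1}(b)\,U_{n,i}(a)\,U_{n,i+1}(a)
\]
does hold for formal $a,b$ (I checked the $3\times 3$ block computation). Two small points you should make explicit. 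First, a relation may be applied in the middle of a word, where the ambient permutation $\tau$ is not the identity; the overpassing labels are then $\tau(i)$ and $\tau(i+1)$ rather than $i$ and $i+1$ (this also affects your parenthetical ``one gets $t_i$ and $t_j$'' in the commuting case). This costs nothing, because you verify the identity with arbitrary formal variables $a,b$ and because both sides of a relation induce the same permutation, so all subsequent labels $j_\alpha$ agree — but it should be said. Second, in the cancellation $\sigma_i\sigma_i^{-1}=1$ you should spell out that the variable attached to a negative crossing is still that of the geometrically overpassing strand, so that the two canceling crossings carry the same variable; with the opposite convention the product $U_{n,i}(t_j)U_{n,i}(t_{j'})^{-1}$ would not be the identity. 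With these clarifications the proof is complete.
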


The map $\Gamma$ is well defined but not multiplicative, i.e. not an algebra morphism. Namely, $\Gamma(ab) \neq \Gamma(a)\Gamma(b)$ when $a$ and $b$ are braids in general. 

\begin{prop}
The morphism $\Gamma$ becomes multiplicative when restricted to the pure braids, so that it yields a representation of $\PBn$.
\end{prop}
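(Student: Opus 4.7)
The plan is to reduce multiplicativity to a bookkeeping statement about how the ``over-passing strand'' labels transform under concatenation. The reason $\Gamma$ fails to be multiplicative in general is that in the factor $U_{n,i_\alpha}(t_{j_\alpha})$ attached to a crossing, the index $j_\alpha$ is determined by the \emph{starting position} at the top of the braid of the strand that crosses over. When we concatenate two braids $a$ and $b$, the crossings coming from $b$ see a different starting position in $ab$ than they do in $b$ alone, and this discrepancy is precisely controlled by the permutation $\pi_a:=\perm(a)\in \Sk_n$.

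Fix word representatives $a=\prod_{\alpha=1}^k \sigma_{i_\alpha}^{s_\alpha}$ and $b=\prod_{\beta=1}^\ell \sigma_{i_\beta}^{s_\beta}$, and let $j^a_\alpha$ (resp.\ $j^b_\beta$) denote the over-passing index at the $\alpha$-th crossing of $a$ (resp.\ $\beta$-th crossing of $b$) when each braid is read alone. Concatenate to obtain a word representative of $ab$. For crossings in the $a$-portion, the strand histories above are unchanged, so the over-passing indices remain $j^a_\alpha$. For a crossing in the $b$-portion, the strand that passes over occupies position $j^b_\beta$ at the top of $b$ (= bottom of $a$), and therefore originates, at the top of $ab$, from position $\pi_a^{-1}(j^b_\beta)$. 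Consequently
\[
\Gamma(ab) \;=\; \Gamma(a)\cdot \prod_{\beta=1}^{\ell} U_{n,i_\beta}\!\left(t_{\pi_a^{-1}(j^b_\beta)}\right)^{s_\beta}.
\]

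Now specialize to $a\in \PBn$. By definition of $\PBn=\ker(\perm)$ one has $\pi_a=\mathrm{id}$, so the indices $\pi_a^{-1}(j^b_\beta)$ coincide with $j^b_\beta$, and the right-hand product collapses to $\Gamma(b)$. Hence $\Gamma(ab)=\Gamma(a)\Gamma(b)$ whenever $a\in \PBn$; in particular this holds for any pair $a,b\in\PBn$, which is exactly the multiplicativity needed. Combined with the already established well-definedness of $\Gamma$ on $\Bn$, this shows that the restriction
\[
\Gamma|_{\PBn}\colon \PBn \longrightarrow \CM_n\!\left(\BZ\!\left[t_i^{\pm 1}\right]_{i=1,\ldots,n}\right)
\]
is a group homomorphism, i.e.\ a linear representation of the pure braid group.

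The substantive content is really packed into the preceding well-definedness statement for $\Gamma$ (compatibility with the two braid relations in spite of the index-shifting coming from the over-passing rule); once that is granted, the only genuine step for multiplicativity is tracking how strand labels propagate across the concatenation point, which is the observation $j^b_\beta\mapsto \pi_a^{-1}(j^b_\beta)$ above, and this is trivial when $\pi_a=\mathrm{id}$. No further computation beyond this tracking is required.
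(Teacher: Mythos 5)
Your argument is correct. The paper actually states this proposition without any proof, so there is nothing to compare against line by line; your label-tracking argument (the over-passing index of a crossing in the $b$-portion of $ab$ gets transported by $\perm(a)$, which is trivial when $a$ is pure) is exactly the standard justification that the paper leaves implicit, and it even yields the slightly stronger statement that $\Gamma(ab)=\Gamma(a)\Gamma(b)$ whenever $a$ alone is pure. The only caveat, which you flag yourself, is that the whole argument presupposes the well-definedness of $\Gamma$ on word representatives, which the paper takes from the preceding proposition (citing Bar-Natan/Morton).
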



Let $\Laurent = \BZ \left[ t_i^{\pm 1} \right]_{i = 1, \ldots ,n}$. We build an induced representation of $\Bn$ over $\Laurent \lbrack \Sn \rbrack \otimes \Laurent\langle g_1 , \ldots , g_n \rangle $, where $\lbrace g_1 , \ldots , g_n \rbrace$ designates the canonical basis to write matrices in $\CM_n(\BZ \left[ t_i^{\pm 1} \right]_{i = 1, \ldots ,n})$. We define the induced Gassner representation as follows.

\begin{defn}[Gassner representation of $\Bn$]
The {\em induced Gassner} (see Definition \ref{inducedpure}) representation of $\Bn$, denoted $\Gassner_n$ is defined using the following endomorphisms associated to standard generators and extended to all the braids multiplicatively.
\[
Gassner_n(\sigma_i) : \left\lbrace
\begin{array}{ccc} \Laurent \lbrack \Sn \rbrack \otimes \Laurent^n & \to & \Laurent \lbrack \Sn \rbrack \otimes \Laurent^n\\
\tau \otimes v & \mapsto & (i,i+1) \circ \tau \otimes U_{n,i}(t_{\tau^{-1}(i+1)})(v)
\end{array} \right.
\]
where $\sigma_i$ is the $i^{th}$ standard generator of $\Bn$, and $(i,i+1)$ is the permutation of $i$ and $i+1$. It's a representation over a space of dimension $n! \times n$. 
\end{defn}

This representation contains the Gassner representation of pure braids. It also contains the Burau representation as it was already the case for $\Gamma$, we state this in the following remark.

\begin{rmk}
\begin{itemize}
\item If $a$ is a pure braid, $Gassner_n(a)$ is block diagonal and $\Gamma(a)$ is the matrix restricted to $\Laurent \lbrack () \rbrack \otimes \Laurent^n$, $()$ stands for the identity permutation.
\item If all the variables are set to be equal to one variable, namely $ t_1 = \cdots = t_n = t$, then $\Gamma$ is the Burau representation.
\end{itemize}
\end{rmk}

\subsubsection{A word about faithfulness}

The Burau representation is known to be faithful for $n=2,3$, unfaithful for $n \ge 5$, and it remains an open question for $n=4$. The natural question coming from the study of Burau is if the Gassner representation is faithful, as it is richer than Burau in terms of variables. It is in fact still an open question.

This question is entirely contained in the question whether $\Gamma$ is a faithful representation of $\PBn$ or not. The explication is the following remark:

\begin{rmk}\label{faithfulnessispure}
The image of $\Laurent \lbrack () \rbrack \otimes \Laurent^n$ under the action of a braid $a$ is contained in the space $\Laurent \lbrack \perm(a) \rbrack \otimes \Laurent^n$.
This ensures that in order to get the identity matrix from $Gassner_n$, the braid $a$ must be pure.
\end{rmk}

This remark is a direct consequence of Definition \ref{inducedpure}. The faithfulness of the Gassner representation is reduced to the following open question.

\begin{oquestion}
Is $\Gamma$ faithful as a representation of $\PBn$?
\end{oquestion}

We end this presentation with a word about faithfulness of Gassner representations. We recall the Birman exact sequence {\cite[Theorem~4.6]{F-M}} in the case of the punctured disk that involves the pure braid group $\PBn$:
\[
1 \to F_{n-1} \to \PBn \to \PB_{n-1} \to 1,
\]
which is called the {\em Fadell -- Neuwirth} exact sequence. Indeed, let $D_n$ be the disk with $n$ punctures, this exact sequence is the Birman exact sequence (see {\cite[Theorem~4.6]{F-M}}) while remarking that the pure braid group is the pure mapping class group of $D_n$, and that the $\pi_1$ of $D_{n-1}$ is a free group in $n-1$ generators denoted $F_{n-1}$. Moreover this pure Birman exact sequence splits so that $\PBn$ is the semi direct product of $\PB_{n-1}$ with $F_{n-1}$. Let $\Gamma_n$ be the Gassner representation of the pure braid group $\PBn$, then one can check that the following diagram commutes:
\[
\begin{tikzcd}[column sep=small]
\PBn \arrow{r}{\hat{\Forget}}  \arrow{d}{\Gamma_n} 
  & \PB_{n-1} \arrow{d}{\Gamma_{n-1}} \\
  \Gamma_n\left( \PBn \right) \arrow{r} & \Gamma_{n-1}\left( \PB_{n-1} \right)
\end{tikzcd}
\]
where the lower horizontal arrow consists in setting $t_n$ to be 1 and deleting last row and column of the matrix. This fact allows a treatment of the faithfulness question by recursion on $n$. In some sense the Gassner representation commutes with the $\Forget$ map so that the recursion property is reduced to the faithfulness of the induced representation of $\Gamma_n$ over $F_{n-1}$ (\cite[Section~2.2]{Knu} for a presentation of these facts). It was used in a series of articles to refine the kernel of Gassner representations. The theorem giving the finest kernel the author knows is the following:
\begin{theorem}[{\cite[Theorem~3.4]{Knu}}]
The kernel of the action of $\Gamma_n$ over $F_{n-1}$ lies in $\left[ C^3F_{n-1},C^2F_{n-1} \right]$ where $C^{\bullet}F_{n-1}$ stands for the terms of the lower central series of $F_{n-1}$. 
\end{theorem}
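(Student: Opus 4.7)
The plan is to exploit the Fadell--Neuwirth splitting to realize $F_{n-1}$ concretely inside $\PBn$ and then analyze the restricted Gassner matrices via Fox calculus and the Magnus expansion. Fix generators $y_1,\dots,y_{n-1}$ of $F_{n-1}$, identified with pure braids dragging the $n$-th puncture around each of the others; each $y_i$ acts on $F_n = \pi_1(D_n)$ as an explicit automorphism $\widetilde{y_i}$ that fixes most generators and conjugates the remaining one. By Definition~\ref{GassnerfromMagnus} and the chain rule, the matrix $\Gamma_n(w)$ for $w \in F_{n-1}$ is captured by the abelianized Fox derivatives $\mathfrak{a}(\partial w/\partial y_j) \in \BZ[\BZ^{n-1}]$, via the fundamental identity $w-1 = \sum_j (\partial w/\partial y_j)(y_j-1)$.

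The first step is to translate $\Gamma_n(w)=\mathrm{Id}$ into the vanishing of each $\mathfrak{a}(\partial w/\partial y_j)$, and then iterate an $I$-adic analysis, where $I \subset \BZ[\BZ^{n-1}]$ is the augmentation ideal. The classical Magnus embedding identifies dimension subgroups with the lower central series, so $w \in C^k F_{n-1}$ implies $\mathfrak{a}(\partial w/\partial y_j) \in I^{k-1}$. At level $k=1$, vanishing of the constant term forces $w \in C^2 F_{n-1}$. At level $k=2$, testing on Hall-basis commutators via the sample computation $\mathfrak{a}(\partial [y_i,y_j]/\partial y_i) = 1 - t_j$ in $I/I^2$ shows that the leading coefficients detect $C^2/C^3$, so $w \in C^3 F_{n-1}$. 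Continuing through $k=3,4$ produces progressively deeper inclusions along the lower central series.

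The refinement to $[C^3 F_{n-1}, C^2 F_{n-1}]$ is the delicate last step: for $k=4,5$ the induced map $C^k/C^{k+1} \to I^{k-1}/I^k$ is no longer injective, because $\BZ[\BZ^{n-1}]$ is commutative and symmetrizes the ``non-abelian'' commutator data. A careful Hall-basis bookkeeping should show that the commutator classes in $C^5/C^6$ left undetected by these abelianized Fox derivatives are precisely those of bracket shape $[u,v]$ with $u \in C^3$, $v \in C^2$, i.e.\ those in $[C^3 F_{n-1}, C^2 F_{n-1}]$. The main obstacle is this combinatorial identification: one must check that every other iterated commutator class of weight $\le 5$ leaves a nontrivial trace in some $\mathfrak{a}(\partial w/\partial y_j)$. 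This reduces to a non-degeneracy statement for the induced pairing on the associated graded of the lower central series, and it is the quantitative core of Knudson's theorem.
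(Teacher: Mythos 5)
First, a point of reference: the paper does not prove this statement at all --- it is quoted from Knudson \cite{Knu} as an external result, so there is no internal argument to compare yours against. Judged on its own terms, your proposal correctly names the standard framework (realize $F_{n-1}$ inside $\PBn$ via the split Fadell--Neuwirth sequence, analyze the restricted matrices through Fox calculus, and filter by powers of the augmentation ideal), but as written it is a reading plan rather than a proof, and the two places where it is vague are exactly the places where the content of the theorem lives.

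Concretely: (1) your opening reduction, that $\Gamma_n(w)=\mathrm{Id}$ is ``captured by'' the abelianized Fox derivatives $\mathfrak{a}(\partial w/\partial y_j)\in\BZ[\BZ^{n-1}]$, is not justified and, taken literally, cannot yield the theorem. The Gassner matrix of $w$ is built from derivatives of $\widetilde{w}(x_i)$ with respect to the generators $x_j$ of $F_n=\pi_1(D_n)$ (Definition \ref{GassnerfromMagnus}), not from derivatives of $w$ as a word in the point-pushing generators $y_j$; if the restriction really were equivalent to the classical abelianized Magnus map on $F_{n-1}$, its kernel would be the full second derived subgroup $F_{n-1}''=[C^2F_{n-1},C^2F_{n-1}]$, which strictly contains $[C^3F_{n-1},C^2F_{n-1}]$. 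So the restricted Gassner matrices must retain strictly more information than your proposed invariants, and the translation between the two (including the role of the extra variable $t_n$) is a computation you have to carry out, not assume. (2) The step you yourself flag as ``the quantitative core'' --- showing that every class not detected by the chosen invariants lies in $[C^3F_{n-1},C^2F_{n-1}]$ and nowhere shallower --- is the entire theorem; phrases like ``a careful Hall-basis bookkeeping should show'' and ``one must check'' leave it unproved. The low-degree observations ($k=1,2$) only give containment in $C^3F_{n-1}$, which is far weaker than the stated bracket. Until the non-degeneracy statement on the associated graded is actually established (in Knudson's argument this is where a faithfulness result for a Magnus-type embedding of $F_{n-1}/[C^3F_{n-1},C^2F_{n-1}]$ enters), the proposal does not prove the theorem.
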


\subsection{Quantum representations}\label{quantumrep}

In this section, we define some quantum representations for the braid groups, arising from the quantum group $\Uq$. 

\subsubsection{An integral version for $\Uq$}\label{halfLusztigversion}



We give a first definition for $\Uq$.

\begin{defn}\label{Uqnaif}
The algebra $\Uq$ is the algebra over $\BQ(q)$ generated by elements $E,F$ and $K^{\pm 1}$, satisfying the following relations:
\begin{align*}
KEK^{-1}=q^2E & \text{ , } KFK^{-1}=q^{-2}F \\
\left[E, F \right] = \frac{K-K^{-1}}{q-q^{-1}} & \text{ and }
KK^{-1}=K^{-1}K=1 .
\end{align*}
The algebra $\Uq$ is endowed with a coalgebra structure defined by $\Delta$ and $\epsilon$ as follows:
\[
\begin{array}{rl}
\Delta(E)= 1\otimes E+ E\otimes K, & \Delta(F)= K^{-1}\otimes F+ F\otimes 1 \\
\Delta(K) = K \otimes K, & \Delta(K^{-1}) = K^{-1}\otimes K^{-1} \\
\epsilon(E) = \epsilon(F) = 0, & \epsilon(K) = \epsilon(K^{-1}) = 1
\end{array}
\]
and an antipode is defined as follows:
\[
S(E) = EK^{-1}, S(F)=-KF,S(K)=K^{-1},S(K^{-1}) = K.
\]
This provides a {\em Hopf algebra} structure, so that the category of modules over $\Uq$ is monoidal. 
\end{defn}

We define quantum numbers.

\begin{defn}\label{quantumq}
Let $i$ be a positive integer. We define the following elements of $\BZ \left[ q^{\pm 1} \right]$.
\begin{equation*}
\left[ i \right]_q := \frac{q^i-q^{-i}}{q-q^{-1}} , \text{  } \left[ k \right]_q! := \prod_{i=1}^k \left[ i \right]_q , \text{  } \qbin{k}{l}_q := \frac{\left[ k \right]_q!}{\left[ k-l \right]_q! \left[ l \right]_q!} .
\end{equation*}
\end{defn}

We are going to define an integral version for $\Uq$, so that we will obtain integral representations for braid groups. This integral version is similar to the one introduced by Lusztig in \cite{Lus}. The difference is that we consider only the divided powers of $F$ as generators, not those of $E$. This version is introduced in \cite{Hab} \cite{JK} and \cite{Jules1} (with subtle differences in the definitions for divided powers for $F$). We follow \cite{Jules1}, so that we first define the so called divided powers. Let:
\[
F^{(n)} =  \frac{(q-q^{-1})^n}{\left[ n \right]_q!} F^n .
\]
Let $\Laurent_0 = \BZ\left[ q^{\pm 1} \right]$ be the ring of integral Laurent polynomials in the variable $q$. 

\begin{defn}[Half integral algebra, \cite{JK} \cite{Jules1}]\label{Halflusztig}
Let $\UqhL$ be the $\Laurent_0$-subalgebra of $\Uq$ generated by $E$, $K^{\pm 1}$ and $F^{(n)}$ for $n\in \BN^*$. We call it a {\em half integral version} for $\Uq$, the word half to illustrate that we consider only half of divided powers as generators. 
\end{defn}

\begin{rmk}\label{relationsUqhL}
$\UqhL$ inherits a Hopf algebra structure, making its category of modules monoidal. The coproduct is given by:
\[
\Delta(K) = K \otimes K \text{ , } \Delta(E) = E \otimes K + 1 \otimes E , \text{ and } \Delta(F^{(n)}) = \sum_{j=0}^n q^{-j(n-j)}K^{j-n} F^{(j)} \otimes F^{(n-j)}. 
\]
\end{rmk}


\subsubsection{Verma modules and braiding}\label{VermaBraiding}

We define the {\em Verma modules} for $\UqhL$, they are infinite dimensional modules depending on a parameter. To preserve the integral structure of coefficients, we let $\Laurent_1 := \BZ \left[ q^{\pm 1} , s^{\pm 1} \right]$.
%
\begin{defn}[Verma modules for $\UqhL$, {\cite[(18)]{JK}, \cite{Jules1}}]\label{GoodVerma}
Let $V^{s}$ be the Verma module of $\UqhL$. It is the infinite $\Laurent_1$-module, generated by vectors $\lbrace v_0, v_1 \ldots \rbrace$, and endowed with an action of $\UqhL$, generators acting as follows:
\[
K \cdot v_j = s q^{-2j} v_{j} \text{ , } E \cdot v_j = v_{j-1} \text{ and } F^{(n)} v_j = \left( \qbin{n+j}{j}_q \prod_{k=0}^{n-1} \left( sq^{-k-j} - s^{-1}q^{j+k} \right) \right) v_{j+n} .
\]
\end{defn}

\begin{rmk}[Weight vectors]\label{weightdenomination}
We will often make implicitly the change of variable $s := q^{\alpha}$ and denote $V^s$ by $V^{\alpha}$. This choice made to use a practical and usual denomination for eigenvalues for the $K$ action (which is diagonal in the given basis). Namely, we say that vector $v_j$ is {\em of weight $\alpha - 2j$}, as $K \cdot v_j = q^{\alpha - 2j} v_j$. The notation with $s$ shows an integral Laurent polynomials structure strictly speaking. 
\end{rmk}

\begin{defn}[$R$-matrix, {\cite[(21)]{JK}}]\label{goodRmatrix}
Let $s=q^{\alpha}$ , $t=q^{\alpha'}$. The operator $q^{H \otimes H /2}$ is the following:
\[
q^{H \otimes H /2}:
\bfct
V^{s} \otimes V^{t} & \to & V^{s} \otimes V^{t}  \\
v_i \otimes v_j & \mapsto & q^{(\alpha - 2i)(\alpha'-2j)} v_i \otimes v_j 
\efct .
\]
We define the following R-matrix:
\[
R : q^{H \otimes H/2} \sum_{n=0}^\infty q^{\frac{n(n-1)}{2}} E^n \otimes F^{(n)} 
\]
which will be well defined as an operator on Verma modules, see the following proposition. 
\end{defn}

\begin{prop}[{\cite[Theorem~7]{JK}}]\label{UqhLbraiding}
Let $V^s$ and $V^t$ be Verma modules of $\UqhL$ (with $s=q^{\alpha}$ and $t=q^{\alpha'}$). Let $\RR$ be the following operator:
\[
\RR:  T \circ R 
\]
where $T$ is the twist defined by $T(v\otimes w ) = w \otimes v$. Then $\RR$ provides a braiding for $\UqhL$ integral Verma modules. 
Namely, the morphism:
\[
Q: \bfct
\Laurent_1\left[ \Bn \right] & \to & \End_{\Laurent_1, \UqhL} \left({V^s}^{\otimes n}\right)  \\
\sigma_i & \mapsto & 1^{\otimes i-1} \otimes \RR \otimes 1^{\otimes n-i-2}
\efct
\]
is an $\Laurent_1$-algebra morphism. It provides a representation of $\Bn$ such that its action commutes with that of $\UqhL$. 
\end{prop}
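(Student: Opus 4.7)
The plan is to verify three things in sequence: well-definedness of $R$ as an endomorphism of $V^s \otimes V^t$, $\UqhL$-equivariance of $\RR = T \circ R$, and the braid relations for the operators $Q(\sigma_i)$. Together these yield that $Q$ is a well-defined algebra morphism whose image lies in $\End_{\Laurent_1, \UqhL}({V^s}^{\otimes n})$.

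First, for well-definedness, observe that $E$ strictly lowers the index on basis vectors of $V^s$: by Definition \ref{GoodVerma}, $E \cdot v_j = v_{j-1}$ with $v_{-1} := 0$. Hence $E^n \cdot v_i = 0$ whenever $n > i$, and the series $\sum_{n \geq 0} q^{n(n-1)/2} E^n \otimes F^{(n)}$ applied to any basis element $v_i \otimes v_j$ has at most $i+1$ nonzero terms and is therefore a finite sum. The diagonal factor $q^{H \otimes H /2}$ acts by the explicit formula of Definition \ref{goodRmatrix}, and the scalars produced by $F^{(n)} \cdot v_j$ lie in $\Laurent_1$; hence the matrix coefficients of $R$ in the basis $\{v_i \otimes v_j\}$ lie in $\Laurent_1$ (up to a central scalar from $q^{\alpha \alpha'/2}$, which is absorbed by the chosen normalisation).

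Second, the $\UqhL$-equivariance of $\RR$ amounts to $R \cdot \Delta(x) = \Delta^{op}(x) \cdot R$ for $x \in \UqhL$. It suffices to check this on the generators $K^{\pm 1}$, $E$, and $F^{(n)}$. For $K$ it is immediate: $\Delta(K) = K \otimes K = \Delta^{op}(K)$ and $R$ preserves weights because $q^{H\otimes H/2}$ is diagonal and the summand $E^n \otimes F^{(n)}$ shifts weights symmetrically. For $E$ and $F^{(n)}$, one plugs in the coproduct formulas of Remark \ref{relationsUqhL} and their opposites, commutes them past the summation defining $R$ using the $q$-commutators of $E$ with $F^{(n)}$ in $\UqhL$, and rearranges; the resulting combinatorial identity on $q$-binomials is verified by induction on $n$.

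Third, the braid relations for $Q$ split into two parts. Far commutativity $Q(\sigma_i)Q(\sigma_j) = Q(\sigma_j)Q(\sigma_i)$ for $|i-j|\ge 2$ is automatic because the operators act on disjoint tensor factors. The remaining relation $(\RR \otimes 1)(1 \otimes \RR)(\RR \otimes 1) = (1 \otimes \RR)(\RR \otimes 1)(1 \otimes \RR)$ on ${V^s}^{\otimes 3}$ is equivalent to the quantum Yang-Baxter equation $R_{12}R_{13}R_{23} = R_{23}R_{13}R_{12}$, which follows from the intertwining identities of the previous step together with the standard compatibility $(\Delta \otimes \mathrm{id})(R) = R_{13}R_{23}$ and $(\mathrm{id}\otimes \Delta)(R) = R_{13}R_{12}$ for the given $R$.

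The main obstacle is the intertwining identity for the divided powers $F^{(n)}$: the coproduct formula in Remark \ref{relationsUqhL} mixes all lower $F^{(j)}$ with weight-dependent $q$-factors, and one has to show that after commuting past the summation defining $R$ the resulting $q$-binomial identities collapse correctly, term by term in the grading by $n$. Once this core identity is established, the Yang-Baxter equation and hence the braid relation follow by the standard deduction for quasi-triangular structures, and the integrality of coefficients over $\Laurent_1$ is guaranteed by the formulas of Definition \ref{GoodVerma}.
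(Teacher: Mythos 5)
The paper does not prove this proposition at all: it is imported verbatim from Jackson--Kerler \cite[Theorem~7]{JK}, so there is no internal argument to compare yours against. Judged on its own terms, your outline follows the standard route (finiteness of the sum from local nilpotency of $E$, intertwining of $R$ with the coproduct, hexagon identities implying Yang--Baxter, hence the braid relation for $\check{R}=T\circ R$), and that route is the right one. But as written it has two genuine gaps, and they sit exactly where the content of the theorem lives. First, the intertwining identity $R\,\Delta(F^{(n)})=\Delta^{op}(F^{(n)})\,R$ and the hexagon identities $(\Delta\otimes\mathrm{id})(R)=R_{13}R_{23}$, $(\mathrm{id}\otimes\Delta)(R)=R_{13}R_{12}$ are asserted with the phrase ``verified by induction'' and ``standard compatibility,'' but for this truncated integral form --- where only the divided powers of $F$ are generators, the coproduct of $F^{(n)}$ carries the weight-dependent factors $q^{-j(n-j)}K^{j-n}$ of Remark \ref{relationsUqhL}, and the Cartan factor $q^{H\otimes H/2}$ is defined only through its action on weight vectors of Verma modules --- these are nontrivial $q$-binomial computations, not consequences of a quoted quasi-triangularity; they are precisely what \cite{JK} proves. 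A complete proof must either carry out these inductions or reduce honestly to the known universal $R$-matrix of $\Uq$ and then check that every coefficient survives the passage to $\Laurent_1$.

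Second, the integrality over $\Laurent_1=\BZ[q^{\pm1},s^{\pm1}]$ is not ``guaranteed by the formulas of Definition \ref{GoodVerma}'' without further care. The diagonal operator sends $v_i\otimes v_j$ to $q^{(\alpha-2i)(\alpha'-2j)}v_i\otimes v_j = q^{\alpha\alpha'}\,t^{-2i}s^{-2j}q^{4ij}\,v_i\otimes v_j$, and the global factor $q^{\alpha\alpha'}$ (equal to $q^{\alpha^2}$ when both factors are $V^s$) is \emph{not} an element of $\Laurent_1$. You wave at this with ``absorbed by the chosen normalisation,'' but the normalisation is part of the statement: one must fix a rescaled $R$ whose matrix coefficients genuinely lie in $\Laurent_1$, check that the rescaling is compatible with the braid relation (it is, since it multiplies each $Q(\sigma_i)$ by the same unit), and only then conclude that $Q$ lands in $\End_{\Laurent_1,\UqhL}({V^s}^{\otimes n})$. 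Make that normalisation explicit rather than implicit.
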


We now pass to a {\em colored version}, which corresponds to taking different Verma modules in the tensor product instead of the same one. Let $V^{()} = V^{\lambda_1} \otimes \cdots \otimes V^{\lambda_n}$, and by analogy, $V^{\tau} =  V^{\lambda_{\tau(1)}} \otimes \cdots \otimes V^{\lambda_{\tau(n)}}$, for $\tau \in \Sk_n$ ($()$ designates the identity permutation). Let $\Laurent = \BZ \lbrack q^{\pm 1}, s_i^{\pm 1} \rbrack_{i  = 1 , \ldots , n}$, the morphism $Q$ extends as follows:
\[
Q(\sigma_i) \in \Hom_{\Laurent}(V^{()}, V^{(i,i+1)}).  
\]

So that if $\beta$ is pure, $Q(\beta) \in \End(V^{()})$, and:
\[
Q: \Laurent\left[ \PBn \right]  \to  \End_{\Laurent_1, \UqhL} \left(V^{()}\right)
\]
is a representation of $\PBn$. One can consider the induced (colored) representation of $\Bn$ as follows:
\[
\Quant_n : \Laurent \lbrack \Bn \rbrack \to \End_{\Laurent_1, \UqhL} \left( \Laurent \lbrack \Sk_n \rbrack \otimes V^{()} \right)
\]
noticing the following isomorphism:
\[
\bfct \Laurent \lbrack \Sk_n \rbrack \otimes V^{()}   & \to & \bigoplus_{\tau \in \Sk_n} V^{\tau} \\
\tau \otimes v^{()}_i & \mapsto & v^{\tau}_i .
\efct
\]

\subsubsection{Finite dimensional braid representations}

Although braid group representations over products of Verma modules are infinite dimensional, it turns out that they are graded by finite dimensional subrepresentations.

\begin{rmk}\label{GoodsubrepKJ}
For $r \in \BN$, the subspace $W_{n,r} = \Ker( K - \left( \prod_i s_i \right)q^{-2r})$ is such that $\Laurent \lbrack \Sk_n \rbrack \otimes W_{n,r}$ provides a sub-representation of $\Bn$.
We usually call $W_{n,r}$ the space of {\em subweight $r$} vectors. 
\end{rmk}

Let $\lbrace v^{\tau}_0, v^{\tau}_1, \ldots \rbrace$ be the standard basis of $V^{\tau}$. The space $W_{n,1}^{()} := () \otimes W_{n,1} \in V^{()}$ is spanned by $\left( f_1,f_2,\ldots,f_n \right)$, where the $f_i$'s are defined as follows:
\[
() \otimes f_1=v^{\lambda_1}_1 \otimes v^{\lambda_2}_0 \otimes v^{\lambda_3}_0 \otimes\cdots \otimes v^{\lambda_r}_0
\]
\[
() \otimes f_2=v^{\lambda_1}_0 \otimes v^{\lambda_2}_1 \otimes\cdots \otimes v^{\lambda_r}_0
\]
and so on, with:
\[
() \otimes f_i=v^{\lambda_1}_0 \otimes v^{\lambda_2}_0 \otimes \cdots \otimes v^{\lambda_i}_1 \cdots \otimes v^{\lambda_r}_0.
\]
These vectors are built as the tensor products of $n-1$ maximal weight vectors plus one of weight (``sub-maximal''), namely $v^{\lambda_i}_1$, inserted on the $i$-th position of the tensor product. 
%
%

\subsubsection{Computation of generators' actions}

We compute the action of braid groups generators over $W:= \bigoplus_{\tau \in \Sk_n} W_{n,1}^{\tau}$. 

\begin{rmk}\label{rmatrixniveau1}
Since $E(v_0) = 0$, if $i+j \le 1$, then:
\[
R(v_i \otimes v_j) = q^{H\otimes H/2}(\Id \otimes \Id + E \otimes F^{(1)})v_i \otimes v_j .
\]
The space $W$ fulfill the conditions of this formula. 
\end{rmk}

\begin{lemma}
Let $\lbrace v^{\lambda_i}_0 , v^{\lambda_i}_1 , \ldots \rbrace$ and $\lbrace v^{\lambda_j}_0 , v^{\lambda_j}_1 , \ldots \rbrace$ be the standard basis of Verma modules $V^{s_i}$ and $V^{s_j}$ respectively ($i,j \in \lbrace 1, \ldots ,n \rbrace$). Then:
\begin{align*}
\RR(v^{\lambda_1}_0 \otimes v^{\lambda_2}_0 ) & =  v^{\lambda_2}_0 \otimes v^{\lambda_1}_0 \\
\RR(v^{\lambda_1}_1 \otimes v^{\lambda_2}_0 ) & =  s_1 v^{\lambda_2}_0 \otimes v^{\lambda_1}_1 + (s_2^2 - 1) v^{\lambda_2}_1 \otimes v^{\lambda_1}_0 \\
\RR(v^{\lambda_1}_0 \otimes v^{\lambda_2}_1 ) & =  s_2 v^{\lambda_2}_1 \otimes v^{\lambda_1}_0
\end{align*}
\begin{proof}
It is a straightforward computation from Definition \ref{goodRmatrix} and Remark \ref{rmatrixniveau1}. 
\end{proof}
\end{lemma}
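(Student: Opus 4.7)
The plan is to apply the definition $\RR = T \circ R$ from Definition \ref{goodRmatrix} directly on each of the three input vectors, reading off the $\UqhL$-actions from Definition \ref{GoodVerma}. The essential simplification is Remark \ref{rmatrixniveau1}: on any vector $v_i \otimes v_j$ with $i+j \le 1$ the infinite series defining $R$ truncates after the first two terms, since $E$ kills $v_0$ and $E^2$ kills $v_1$. Hence I only need
\[
R(v_i \otimes v_j) = q^{H \otimes H/2}\bigl(v_i \otimes v_j + E(v_i) \otimes F^{(1)}(v_j)\bigr),
\]
followed by the twist $T$ which swaps the tensor factors.

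To carry this out I first tabulate the elementary actions: $E(v_0) = 0$, $E(v_1) = v_0$, and $F^{(1)}(v_0) = (s - s^{-1})\,v_1$ by plugging $n=1$, $j=0$ into the divided-power formula of Definition \ref{GoodVerma}. The Cartan factor is diagonal on the chosen basis with scalar $q^{(\lambda_1 - 2i)(\lambda_2 - 2j)}$, and I systematically substitute $s_k = q^{\lambda_k}$. Case $(i,j) = (0,0)$ keeps only the Cartan contribution and immediately yields the first identity. Cases $(1,0)$ and $(0,1)$ each leave exactly one additional non-Cartan term coming from $E \otimes F^{(1)}$, producing the two summands of the remaining identities once $T$ is applied.

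The step I expect to be the most delicate is the bookkeeping of the $q$-exponents produced by $q^{H \otimes H/2}$, together with recognising that the stated coefficients $s_1$, $s_2$ and $s_2^2 - 1 = s_2(s_2 - s_2^{-1})$ are the compact form taken by the raw expansion after the substitutions $s_k = q^{\lambda_k}$ and after collecting the overall Cartan factor. This bookkeeping is routine but not automatic; any sign or normalisation error would be detected by comparing with the braiding property of Proposition \ref{UqhLbraiding}.
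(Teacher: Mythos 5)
Your proposal follows essentially the same route as the paper's one-line proof: truncate the $R$-series using Remark \ref{rmatrixniveau1}, read off $E v_0=0$, $E v_1=v_0$ and $F^{(1)}v_0=(s-s^{-1})v_1$ from Definition \ref{GoodVerma}, apply the Cartan operator of Definition \ref{goodRmatrix}, then the twist $T$. The only step you defer --- collecting the overall Cartan prefactor $q^{\lambda_1\lambda_2}$ (which is not in $\Laurent$ and must be implicitly normalized away) so that the raw coefficients become $s_1$, $s_2$ and $s_2(s_2-s_2^{-1})=s_2^2-1$ --- is exactly the convention-sensitive bookkeeping the paper also leaves implicit, so your outline matches the intended argument.
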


\begin{coro}
The representation of $\Bn$ over $W = \Laurent \lbrack \Sk_n \rbrack \otimes W_{n,1}$ is defined by the action of generators over the basis as follows:
\[
\begin{array}{lr}
\Quant_n(\sigma_k)(\tau \otimes f_k)  =   (1-s_{\tau^{-1}(k)}^2)((k,k+1)\tau \otimes f_k) - s_{\tau^{-1}(k+1)} ((k,k+1)\tau \otimes f_{k+1})   & \\
\Quant_n(\sigma_k)(\tau \otimes f_{k+1})  =   -s_{\tau^{-1}(k)}(k,k+1)\tau \otimes f_{k}  & \\
\Quant_n(\sigma_k)(\tau \otimes f_{i}) =   (i,i+1)\tau \otimes f_{i} & \text{ if } i \neq k,k+1.
\end{array}
\]
\end{coro}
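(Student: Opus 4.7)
My plan is to unfold the definition of $\Quant_n$ given in Proposition \ref{UqhLbraiding} on each basis vector $\tau \otimes f_i$ and then read off the result using the R-matrix identities of the preceding lemma. Under the isomorphism $\Laurent[\Sk_n] \otimes V^{()} \cong \bigoplus_{\tau \in \Sk_n} V^\tau$, the vector $\tau \otimes f_i$ corresponds to the pure tensor in $V^\tau$ whose only non-maximal-weight entry is $v_1$ in the $i$-th slot. The operator $\Quant_n(\sigma_k)$ acts by inserting $\RR$ at the $k$-th and $(k+1)$-st tensor factors; since $\RR$ swaps the labels of the two Verma modules it acts on, its image lies in $V^{\tau'}$, where $\tau'$ differs from $\tau$ by transposing $k$ and $k+1$, which accounts for the $(k,k+1)\tau$ factor on the right hand side of the stated formulas.

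The computation now splits into three cases according to where the $v_1$ lives. If $i \notin \{k, k+1\}$, slots $k$ and $k+1$ both carry $v_0$'s, and the first identity of the lemma $\RR(v_0 \otimes v_0) = v_0 \otimes v_0$ only permutes the Verma factors, so $v_1$ remains in slot $i$ and the output is exactly $(k,k+1)\tau \otimes f_i$. If $i = k$, the piece of the tensor at slots $k,k+1$ is $v_1 \otimes v_0$, and the second identity of the lemma expresses its image as a linear combination of a term with the sub-maximal vector at slot $k+1$ and a term at slot $k$; these identify with scalar multiples of $(k,k+1)\tau \otimes f_{k+1}$ and $(k,k+1)\tau \otimes f_k$ respectively. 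If $i = k+1$, the piece is $v_0 \otimes v_1$ and the third identity produces a single term proportional to $(k,k+1)\tau \otimes f_k$.

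The scalars that appear are precisely those of the lemma with $\lambda_1, \lambda_2$ replaced by the labels $\lambda_{\tau(k)}, \lambda_{\tau(k+1)}$ of the two Verma modules actually being braided; rewriting them in terms of the strand sitting at position $k$, namely $\tau^{-1}(k)$, yields the coefficients $s_{\tau^{-1}(k)}$ and $s_{\tau^{-1}(k+1)}$ displayed in the statement. There is no genuine obstacle: the argument is a direct substitution once the preceding lemma is in hand, and the only real care required is the bookkeeping of indices between the permutation $\tau$ and the labels of the Verma modules at the braided positions.
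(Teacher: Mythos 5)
Your proposal is correct and is exactly the argument the paper intends: the corollary is stated without proof as an immediate consequence of the preceding lemma, and your case analysis (insert $\RR$ at slots $k,k+1$, note it permutes the Verma labels, and read off the coefficients from the three $\RR$-identities with $s_1,s_2$ replaced by the colors at the braided positions) is that implicit proof. The only caveat is index/sign bookkeeping, which you rightly flag as the sole point of care — note that the paper itself is not fully consistent here (compare the sign of the $f_{k+1}$ term in the corollary with the one used in the proof of Theorem \ref{Gassnerarequantum}).
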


\subsection{Gassner representation from quantum ones}\label{Gassnerfromquantum}
 

We recall the context of both representations, namely:
\begin{itemize}

\item from Section \ref{Gassnerrep} that $\Gassner_n$ is a representation of $\Bn$:
\[
\Gassner_n: \Laurent \lbrack \Bn \rbrack \to \End_{\Laurent} \left( \Laurent \lbrack \Sn \rbrack \otimes \Vect_{\Laurent} (g_1, \ldots , g_n ) \right)
\]
involving formal variables $t_1, \ldots, t_n$.

\item from Section \ref{quantumrep} that $\Quant_n$ is a representation of $\Bn$:
\[
\Quant_n: \Laurent \lbrack \Bn \rbrack \to \End_{\Laurent} \left( \Laurent \lbrack \Sn \rbrack \otimes \Vect_{\Laurent} (f_1, \ldots , f_n ) = W \right)
\]
involving formal variables $s_1, \ldots, s_n$.

\end{itemize}

In order to relate the representations $\Quant$ and $\Gassner$, we need first to connect variables. We use the following identification:
\[
s_i^2 = t_i 
\]

Then let $\Phi$ be the following morphism relating both representations:
\[
\left. \begin{array}{ccc} \Laurent \lbrack \Sn \rbrack \otimes \Vect_{\Laurent} (f_1, \ldots , f_n ) & \to & \Laurent \lbrack \Sn \rbrack \otimes \Vect_{\Laurent} (g_1, \ldots , g_n ) \\
\tau \otimes f_i & \mapsto & \frac{1-s_{\tau^{-1}(i)}^2}{\prod_{j=i}^{n}s_{\tau^{-1}(j)}}\tau \otimes g_i
\end{array} \right.
\]


\begin{theorem}\label{Gassnerarequantum}
Gassner representations are of quantum type, namely the morphism $\Phi$ conjugates $\Quant$ to $\Gassner$, in the sense that for all $n\in \mathbb{N}$, for all $\alpha \in \Bn$ the following relation holds:
\[
\Gassner_n(\alpha)\circ \Phi = \Phi \circ \Quant_n(\alpha)
\]
\end{theorem}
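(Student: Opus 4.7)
The two sides of the claimed intertwining relation are multiplicative in $\alpha$: both $\Gassner_n$ and $\Quant_n$ are, by construction of the induced representations of $\Bn$, $\Laurent$-algebra morphisms out of $\Laurent[\Bn]$, and $\Phi$ is a fixed $\Laurent$-linear isomorphism. It therefore suffices to verify
\[
\Gassner_n(\sigma_k) \circ \Phi = \Phi \circ \Quant_n(\sigma_k)
\]
for every standard generator $\sigma_k$, $k=1,\dots,n-1$, evaluated on each basis vector $\tau \otimes f_i$ of $W$. The verification uses the explicit Burau block of $\Gassner_n(\sigma_k)$, the formulas of the preceding corollary for $\Quant_n(\sigma_k)$, the definition of $\Phi$, and the identification $t_i = s_i^2$.

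The computation splits into three cases according to the position of $i$ relative to $\{k, k+1\}$. When $i \notin \{k, k+1\}$ both operators fix the second tensor factor and only precompose $\tau$ by $(k,k+1)$ on the left; the identity then reduces to the invariance of the scalar
\[
\varphi_i(\tau) := \frac{1 - s_{\tau^{-1}(i)}^2}{\prod_{j=i}^n s_{\tau^{-1}(j)}}
\]
under $\tau \mapsto (k,k+1)\tau$. This is immediate: $\tau^{-1}(i)$ is unchanged because $i \ne k, k+1$, and the denominator is unchanged because either $k, k+1$ both lie strictly below $\{i,\dots,n\}$, or both lie inside this range and the two swapped factors recombine to the same product.

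For the cases $i = k$ and $i = k+1$, set $a = s_{\tau^{-1}(k)}$, $b = s_{\tau^{-1}(k+1)}$, $P = \prod_{j \geq k+2} s_{\tau^{-1}(j)}$, and $\tau' = (k,k+1)\tau$, so that $\tau'^{-1}$ swaps the roles of $a$ and $b$. The values of $\Phi$ on the four relevant vectors $\tau \otimes f_k$, $\tau' \otimes f_k$, $\tau \otimes f_{k+1}$, $\tau' \otimes f_{k+1}$ then become explicit monomials in $a, b, P$. Expanding $\Gassner_n(\sigma_k) \circ \Phi(\tau \otimes f_i)$ via the Burau block at parameter $t = b^2$, and $\Phi \circ \Quant_n(\sigma_k)(\tau \otimes f_i)$ via the corollary, each produces an explicit $\Laurent$-linear combination of $\tau' \otimes g_k$ and $\tau' \otimes g_{k+1}$; the required equality follows by term-by-term comparison of the two pairs of coefficients.

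The main obstacle is the careful bookkeeping of $\varphi_i(\tau)$ under $\tau \mapsto (k,k+1)\tau$ in these two mixing cases. The specific shape of $\Phi$ — with numerator $1 - s_{\tau^{-1}(i)}^2$ matching the R-matrix weight factor, and denominator $\prod_{j=i}^n s_{\tau^{-1}(j)}$ providing the cumulative normalization coming from the product of the $R$-matrix action with the remaining identities on the right — is chosen precisely so that the R-matrix coefficients appearing in $\Quant_n(\sigma_k)$ become the Burau block coefficients after rescaling. Verifying that these cancellations succeed term by term, and not merely up to an overall scalar, is the crux of the proof.
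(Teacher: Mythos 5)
Your proposal follows essentially the same route as the paper: reduce to the Artin generators by multiplicativity, then verify the intertwining relation on each basis vector $\tau \otimes f_i$ by direct computation, splitting into the cases $i \notin \{k,k+1\}$, $i = k$, and $i = k+1$. The paper carries out the two mixing cases explicitly (which is the entire content of the proof) whereas you only assert that the term-by-term comparison succeeds, but your setup --- the Burau block at parameter $t = s_{\tau^{-1}(k+1)}^2$, the corollary's R-matrix coefficients, and the invariance of the scalar $\varphi_i(\tau)$ under $\tau \mapsto (k,k+1)\tau$ in the non-mixing case (a point you actually treat more carefully than the paper does) --- is exactly what that computation requires, and it does go through.
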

\begin{proof}
Let $\sigma_k$ be a standard Artin generator of $\Bn$, $\tau \in \Sn$.

Remark that if $i$ is different from $k$ and $k+1$ then, as $\Quant$ and $\Gassner$ both act by identity over $\tau \otimes f_i$, the equality is trivial on these vectors. It remains two cases.
\begin{itemize}
\item{ \em Case 1: $i=k$.} Let's compute the two sides of the commutation equality. We begin with $\Phi \circ \Quant_n(\sigma_k)(\tau \otimes f_k)$:
%
\[
\begin{array}{rcl}
\Phi \circ \Quant_n(\sigma_k)(\tau \otimes f_k) & = & \Phi \left( (1-s_{\tau^{-1}(k)}^2)((k,k+1)\tau \otimes f_k) + s_{\tau^{-1}(k+1)} ((k,k+1)\tau \otimes f_{k+1}) \right)\\
& = & A\cdot (k,k+1)\tau \otimes g_k + B \cdot (k,k+1)\tau \otimes g_{k+1}
\end{array}
\]

where:
\[
A = (1-s_{\tau^{-1}(k)}^2)\frac{1-s_{\tau^{-1}(k+1)}^2}{\prod_{j=k}^{n}s_{\tau^{-1}(j)}}
\]
\[
B = s_{\tau^{-1}(k+1)}\frac{1-s_{\tau^{-1}(k)}^2}{s_{\tau^{-1}(k)}\prod_{j=k+2}^{n}s_{\tau^{-1}(j)}} =  s_{\tau^{-1}(k+1)}^2\frac{1-s_{\tau^{-1}(k)}^2}{\prod_{j=k}^{n}s_{\tau^{-1}(j)}}
\]

Now we compute $\Gassner_n(\sigma_k)\circ \Phi(\tau \otimes f_k)$:\\
$
\begin{array}{rcl}
\Gassner_n(\sigma_k)\circ \Phi(\tau \otimes f_k) & = & \frac{1-s_{\tau^{-1}(k)}^2}{\prod_{j=k}^{n}s_{\tau^{-1}(j)}} \Gassner_n(\sigma_k)(\tau \otimes g_k)\\
& = & \frac{1-s_{\tau^{-1}(k)}^2}{\prod_{j=k}^{n}s_{\tau^{-1}(j)}} \left( (1-s_{\tau^{-1}(k+1)}^2)(k,k+1)\tau \otimes g_k \right. \\
& & \left. + s_{\tau^{-1}(k+1)}^2 (k,k+1)\tau \otimes g_{k+1}  \right) \\
& = & \Phi \circ \Quant_n(\sigma_k)(\tau \otimes f_k).
\end{array}
$

The last equality comes from the expression of $\Phi \circ \Quant_n(\sigma_k)(\tau \otimes f_k)$ obtained above, and provides the conjugation in this case.
\item{\em Case 2: $i=k+1$.} We begin with the computation of $\Phi \circ \Quant_n(\sigma_k)(\tau \otimes f_{k+1})$:
\[
\begin{array}{rcl}
\Phi \circ \Quant_n(\sigma_k)(\tau \otimes f_{k+1}) &=& \Phi\left( s_{\tau^{-1}(k)}(k,k+1)\tau \otimes f_{k}  \right)\\
&=& s_{\tau^{-1}(k)}\frac{1-s_{\tau^{-1}(k+1)}^2}{\prod_{j=k}^{n}s_{\tau^{-1}(j)}}(k,k+1)\tau \otimes g_{k}\\
&=& \frac{1-s_{\tau^{-1}(k+1)}^2}{\prod_{j=k+1}^{n}s_{\tau^{-1}(j)}}(k,k+1)\tau \otimes g_{k}
\end{array} .
\]
Now we compute $\Gassner_n(\sigma_k)\circ \Phi(\tau \otimes f_{k+1})$:
\[
\begin{array}{rcl}
\Gassner_n(\sigma_k)\circ \Phi(\tau \otimes f_{k+1}) &=& \Gassner_n(\sigma_k) \left( \frac{1-s_{\tau^{-1}(k+1)}^2}{\prod_{j=k+1}^{n}s_{\tau^{-1}(j)}} \tau \otimes f_{k} \right)\\
&=& \frac{1-s_{\tau^{-1}(k+1)}^2}{\prod_{j=k+1}^{n}s_{\tau^{-1}(j)}} \tau \otimes g_{k} \\
&=&\Phi \circ \Quant_n(\sigma_k)(\tau \otimes f_{k+1})
\end{array} .
\]
The last equality coming from the expression of $\Phi \circ \Quant_n(\sigma_k)(\tau \otimes f_{k+1})$ obtained above, and provides the desired equality.
\end{itemize}

We have proved that for any generator $\sigma_k$ of $\Bn$, its representations by $\Quant_n$ and by $\Gassner_n$ are conjugated by $\Phi$. As $\Quant_n$ and $\Gassner_n$ are representations, the theorem is proved for all braids.
\end{proof}

\begin{rmk}
The morphism $\Phi$ is invertible whenever $s_i \neq \pm 1$ for all $i \in \lbrace 1 , \ldots , n \rbrace$. \cite[Theorem~3]{Jules1} provides an isomorphism of modules preserving the integral Laurent polynomials structure of coefficients, by use of an appropriate basis for the Gassner representation.
\end{rmk}

\section{Colored BKL representations}\label{cBKL}

In this section, we construct BKL-like homological representations of braid groups, called {\em colored BKL representations}. We follow the construction of \cite{K-T} and \cite{Big1} for the (uncolored) BKL-representation that inspires a generalization of it. We follow ideas from \cite{Big1} to compute the matrices of this representation. This construction corresponds to the level $r=2$ of the one over modules $\Habs_r$ in \cite{Jules1}. Although the obtained representations are the same, the following construction is different: it involves Fox calculus for the computation of the homology, and uses a pairing to compute matrices. Matrices are given in Bigelow's basis of forks, we follow his work \cite{Big1}.

\subsection{Construction and Faithfulness}\label{constructionandfaithfulness}

The general concept of Lawrence's representations, see \cite{Law,Jules1}, is to make the braid group act on a homology group of a certain covering of the configuration space of several points in the punctured disk. We recall Definition \ref{configspaceofthepunctureddisk0} of the configuration spaces of points.

\begin{defn}[Configuration space of the punctured disk]\label{configspaceofthepunctureddisk}
Let $n,m$ be integers. The configuration space $C_{n,m}$ of $m$ unordered points in $D_n$ is defined as follows:
\[
C_{n,m} = \{ (z_1, \ldots, z_m) \in (D_n)^m \text{ s.t. } z_i \neq z_j \text{ for }  i \neq j \} / \fS_m
\]
where $\fS_m$ acts by permutation on the order of coordinates. 

Let $C : = C_{n,2}$ for $n$ fixed (we omit $n$ in this notation whenever no confusion arises). 
\end{defn}

We denote $\{x,y \}$ an element of $C$ ($\{x,y \} =\{y,x \}$), and $c = \{d_1,d_2\}$ a base point of $C$ with the $d_i$'s lying in the boundary of $D_n$.

\begin{prop}[{\cite[Proposition 1.3]{P-P}}]
The first homology group of $C$, namely $H_1(C, \BZ)$ is isomorphic to $\BZ ^n \oplus \BZ$.
\end{prop}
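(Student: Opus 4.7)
The plan is to exhibit $n+1$ explicit $1$-cycles in $C = C_{n,2}$, prove they are linearly independent by constructing auxiliary maps $C \to S^1$, and then argue they generate all of $H_1(C,\BZ)$ via a Fadell--Neuwirth analysis of the ordered configuration space. Concretely, for each puncture $p_i$ with $i = 1,\ldots,n$ let $\alpha_i$ be the loop in $C$ in which one of the two configuration points stays near $d_1$ while the other, starting at $d_2$, traces a small simple loop encircling $p_i$ and returns; let $\beta$ be the half-twist loop in which the two configuration points swap positions along a short arc avoiding all punctures. This is a closed loop in $C$ precisely because $\{d_1,d_2\} = \{d_2,d_1\}$ in the unordered space.

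To show these $n+1$ classes are linearly independent, I introduce the continuous maps
\[
\phi_i(\{x,y\}) = \frac{(x-p_i)(y-p_i)}{|(x-p_i)(y-p_i)|}, \qquad \phi_\beta(\{x,y\}) = \frac{(x-y)^2}{|x-y|^2},
\]
from $C$ to $S^1$; both descend to the unordered configuration space thanks to the symmetry of the numerators. A direct degree computation gives $(\phi_i)_*[\alpha_j] = \delta_{ij}$, $(\phi_i)_*[\beta] = 0$, $(\phi_\beta)_*[\alpha_j] = 0$, and $(\phi_\beta)_*[\beta] = 1$. The assembled map $\Phi_* = ((\phi_1)_*, \ldots, (\phi_n)_*, (\phi_\beta)_*) : H_1(C,\BZ) \to \BZ^{n+1}$ is therefore surjective and sends $[\alpha_1],\ldots,[\alpha_n],[\beta]$ to the standard basis; in particular, $H_1(C,\BZ)$ contains a free direct summand of rank $n+1$ generated by these classes.

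To establish the matching upper bound, the plan is to pass to the ordered configuration space via the double cover $F_{n,2} \to C$ and then to exploit the Fadell--Neuwirth fibration $F_{n,2} \to D_n$ obtained by forgetting the second point. Its base has the homotopy type of $\bigvee_n S^1$ and its fiber $D_n \setminus \{z_1\}$ the homotopy type of $\bigvee_{n+1} S^1$; since the monodromy fixes the punctures $p_i$ as well as the based loop around $z_1$, the action on $H_1$ of the fiber is trivial. The Serre spectral sequence then collapses in the relevant range (as the base is one-dimensional) and gives $H_1(F_{n,2},\BZ) = \BZ^{2n+1}$, with generators indexed by (point $1$ around $p_i$), (point $2$ around $p_i$), and a twist generator. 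The deck involution pairs up the first two families while fixing the twist generator, so the image of $p_*$ in $H_1(C,\BZ)$ has rank at most $n+1$; an elementary argument using that $p_*$ is an isomorphism after inverting $2$ together with the surjection $\Phi_*$ of the previous paragraph then pins $H_1(C,\BZ)$ down to $\BZ^{n+1}$.

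The main obstacle is the last step: combining the rational calculation from the covering with the integral lower bound to rule out torsion. The cleanest way out is to observe that the explicit basis $[\alpha_1],\ldots,[\alpha_n],[\beta]$ is simultaneously detected by $\Phi_*$ and visibly realizes the rank predicted by the Fadell--Neuwirth analysis, so any torsion subgroup would contradict one of the two computations.
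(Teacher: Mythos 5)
The proposition is quoted from Paoluzzi--Paris and the paper gives no proof of its own, so your argument has to stand on its merits. Your lower bound is correct, and is in effect a construction of the local system the paper uses: the maps $\phi_i$ and $\phi_\beta$ are symmetric in $x,y$, hence descend to the unordered space, and the degree computations $(\phi_i)_*[\alpha_j]=\delta_{ij}$, $(\phi_\beta)_*[\beta]=1$, etc.\ do exhibit a \emph{split} free summand $\BZ^{n+1}\subseteq H_1(C,\BZ)$ with basis $[\alpha_1],\dots,[\alpha_n],[\beta]$. The Fadell--Neuwirth computation $H_1(F_{n,2},\BZ)\cong\BZ^{2n+1}$ with trivial monodromy on $H_1$ of the fibre is also fine (point-pushing is a product of Dehn twists along separating curves of a planar surface, and these act trivially on $H_1$).

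The gap is the final step. Writing $H_1(C,\BZ)\cong\BZ^{n+1}\oplus K$ with $K=\ker\Phi_*$, your two computations only show that $K\otimes\BZ[1/2]=0$, i.e.\ that $K$ is a finite abelian $2$-group: a transfer argument for a double cover says nothing at the prime $2$, and the situation ``$\Phi_*$ is a split surjection onto $\BZ^{n+1}$ and the rank equals $n+1$'' is perfectly consistent with $H_1(C,\BZ)\cong\BZ^{n+1}\oplus\BZ/2$. So the closing sentence ``any torsion subgroup would contradict one of the two computations'' is an assertion, not a deduction. Your setup can be completed: the five-term exact sequence of $1\to\pi_1(F_{n,2})\to\pi_1(C)\to\BZ/2\to1$, together with $H_2(\BZ/2;\BZ)=0$, shows that $p_*$ is injective on the coinvariants $\left(H_1(F_{n,2})\right)_{\BZ/2}\cong\BZ^{n+1}$ with cokernel exactly $\BZ/2$, generated by the image of $[\beta]$; since the twist generator of $H_1(F_{n,2})$ is the transfer of $\beta$, one has $p_*(\mathrm{twist})=2[\beta]$, whence $H_1(C,\BZ)=\langle[\alpha_1],\dots,[\alpha_n],[\beta]\rangle$, which is free of rank $n+1$ by your $\Phi_*$ computation. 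Alternatively, and much more cheaply, abelianize the presentation $\langle \CG\mid\CR\rangle$ of $\pi_1(C)$ recorded in Proposition \ref{pi1C}: every relator $r_{j,k}$ is a commutator, so $H_1(C,\BZ)$ is free abelian on $x_1,\dots,x_n,y$.
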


\begin{defn}\label{localsystemcoloredBKL}
We consider the Hurewicz morphism:
\[
Hurewicz: \pi_1(C) \to H_1(C) = \BZ ^n \oplus \BZ = \langle q_1 \rangle \oplus \cdots \oplus \langle q_n \rangle \oplus \langle t \rangle,
\]
and we denote by $\widetilde{C}$ the covering corresponding to the kernel of this map, namely the maximal abelian cover. 
\end{defn}

The homology group $H_2(\widetilde{C})$ is turned into a $\BZ \left[ q_1^ {\pm1} , \ldots , q_n^ {\pm1} , t^ {\pm1} \right]$-module. We are  going to show that, in Lawrence's construction spirit, this homological module is acted upon by $\PBn$ by  $\BZ \left[ q_1^ {\pm1} , \ldots , q_n^ {\pm1} , t^ {\pm1} \right]$-module automorphisms. This action is the so called {\em colored BKL representation}; it is shown in the general case of Lawrence representations (all level of gradings) in \cite[Lemma~6.34]{Jules1}. Here we follow \cite{Big1}.

\begin{rmk}
In Definition \ref{localsystemcoloredBKL}, we keep the $n+1$ generators of the abelianized group, while the {\em uncolored} version from \cite{Big1} consists in post-composing this Hurewicz map with an augmentation morphism, sending the $n$ generators $q_i$'s to a single one. 
\end{rmk}

A path $\xi: I \to C$ is a pair of paths $\xi = \{ \xi_1 , \xi_2 \}$ where $\xi_1,\xi_2 : I \to D_n$. As we are looking to unordered pairs of points, there are two possibilities for a path $\xi$ to be a loop:
\[
\xi_1(0) = \xi_1(1) \text{ and } \xi_2(0) = \xi_2(1)
\]
so that both $\xi_i$'s are loops, or:
\[
\xi_1(0) = \xi_2(1) \text{ and } \xi_2(0) = \xi_1(1),
\]
where $\xi_1$ and $\xi_2$ permutes their endpoints (they are not loops) but the product $\xi_1 \xi_2$ is a loop. 

We define invariants $w_i$ of homotopy classes of loops in $C$ for all $i \in \{1,\ldots,n\}$ and for the two cases of a loop $\xi = \{ \xi_1 , \xi_2 \}$ of $C$:
\begin{itemize}
\item If $\xi_1$ and $\xi_2$ both are loops, then we define $w_i(\xi)= w_i(\xi_1) + w_i(\xi_2)$ where $w_i(\xi_k)$ is the winding number of $\xi_k$ ($k=1,2$) around the puncture $p_i$.
\item For the case where $\xi_1$ and $\xi_2$ permute base points we define $w_i(\xi) = w_i(\xi_1 \xi_2 ) $ to be the winding number around the puncture $p_i$ of the loop $\xi_1 \xi_2$.
\end{itemize}


We define another invariant $u$, remarking that the map:
\[
\left\lbrace \begin{array}{rcl}
I & \to & S^1 \\
s & \mapsto & \frac{\xi_1(s) - \xi_2(s)}{|\xi_1(s) - \xi_2(s)|}
\end{array} \right.
\]
sends $s=0,1$ to the same points or to opposite ones. Hence, the square of this function provides a loop of $S^1$, $u(\xi)$ is the index of it. Note that $u(\xi)$ is even if the $\xi_i$'s are loops, odd otherwise. These classic invariants are additive with respect to product of loops and preserved under homotopy.

These invariants can equivalently be defined as follows:
\[
w_i(\xi) = \frac{1}{2\pi i}  \left( \int_{\xi_1} \frac{dz}{z-p_i} + \int_{\xi_2} \frac{dz}{z-p_i} \right)
\]
and:
\[
u(\xi) = \frac{1}{\pi i} \int_{\xi_2-\xi_1} \frac{dz}{z} .
\]

The map:
\[
\phi: \xi \to q_1^{w_1(\xi)} \cdots q_n^{w_n(\xi)} t^{u(\xi)}
\]
is a surjective group homomorphism from $\pi_1(C)$ to the free abelian group with $(n+1)$ generators $q_1,\ldots,q_n,t$. It corresponds to the Hurewicz map, see the introduction of \cite{Koh}.

Then $\widetilde{C} \to C$ is the covering map corresponding to the kernel of $\phi$, and $\CH = H^{lf}_2(\widetilde{C}, \BZ)$ is a module over $\Laurent=\BZ \left[ q_1 ^{\pm 1}, \ldots , q_n ^{\pm 1},t ^{\pm 1} \right]$, once we choose a lift $\tilde{c}$ of the base point $c$ to $\widetilde{C}$. The letters $lf$ indicate the locally finite version of the singular homology, see \cite[Appendix]{Jules1} for precisions.

We recall that if $f$ is a self-homeomorphism of $D_n$ (which is the identity on the boundary), it induces a homeomorphism $\hat{f} : C \to C$ by:
\begin{equation}\label{homeocoordinatebycoordinate}
\hat{f} ( \{ x, y \}) = \{ f(x),f(y) \}
\end{equation}
Note that $\hat{f} (c) = c$ as $d_1$ and $d_2$ are picked in the boundary of $D_n$. We define the induced automorphism  $f_{\#}$ of $\pi_1(C,c)$. The following holds.

\begin{lemma}\label{phipreserved2}
Let $f$ be such a self-diffeomorphism but fixing punctures pointwise. Then $\phi \circ f_{\#} = \phi$.
\end{lemma}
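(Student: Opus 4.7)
The plan is to show that $\hat f_*$ acts as the identity on $H_1(C;\BZ)\cong\BZ^{n+1}$. Since $\phi$ coincides with the Hurewicz homomorphism (followed by the identification of the target with $\langle q_1,\ldots,q_n,t\rangle$), this is equivalent to $\phi\circ f_\#=\phi$, and because $\phi(\xi)=q_1^{w_1(\xi)}\cdots q_n^{w_n(\xi)}t^{u(\xi)}$, it is enough to check that each coordinate $w_i$ and $u$ is preserved by $\hat f$.

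For the coordinates $w_i$: in either case of the definition, $w_i(\xi)$ is the winding number around $p_i$ of a concrete loop $\gamma$ in $D_n$ (a summand $\xi_k$ in the two-loop case, or the concatenation $\xi_1\xi_2$ in the swap case), and $w_i(\hat f\xi)$ is the winding number of $f\circ\gamma$ around $p_i$. Extend $f$ continuously to the closed unit disk by setting $p_j\mapsto p_j$ for every $j$; the extension is an orientation-preserving self-homeomorphism of the disk that fixes $p_i$, so it restricts to an orientation-preserving self-homeomorphism of the complement of $\{p_i\}$ in the closed disk. This restriction induces the identity on the first homology of that once-punctured disk, which is $\BZ$, generated by a small loop around $p_i$; composing with the natural map from $H_1(D_n)$ that reads off the winding number around $p_i$ yields $w_i(f\circ\gamma)=w_i(\gamma)$.

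For the coordinate $u$: I will exhibit a representative of the class $t$ that is fixed on the nose by $\hat f$. The identity $\phi\circ f_\#=\phi$ is unchanged if $f$ is replaced by an isotopic representative (because $f_\#$ only depends on the isotopy class of $f$), so one may assume $f$ is the identity on a collar neighborhood $N$ of $\partial D_n$, chosen so that $N$ contains $d_1,d_2$ and meets no puncture. Pick a loop $\tau\colon I\to C$ supported in the unordered pairs inside $N$ that performs a single local half-twist exchange of $d_1$ and $d_2$; then $w_i(\tau)=0$ for all $i$ and $u(\tau)=1$, so $\tau$ represents $t$ in $H_1(C;\BZ)$. Because $f$ is the identity on $N$, we have $\hat f\circ\tau=\tau$ and hence $\hat f_* t=t$; combined with the preceding step this proves that $\hat f_*$ is the identity on $H_1(C;\BZ)$, and therefore $\phi\circ f_\#=\phi$.

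The main obstacle will be the treatment of $u$, since unlike the $w_i$'s it is not directly a winding number inside $D_n$ and does not obviously factor through the action of $f$ on each individual puncture. The trick that unlocks the argument is to realize the generator $t$ by a loop supported in a small collar of $\partial D_n$, where $f$ may be arranged to be trivial by an isotopy; with this choice the invariance of $u$ becomes immediate.
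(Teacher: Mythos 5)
Your treatment of the $w_i$'s is essentially the paper's argument (reduce to the class in the first homology of a once-punctured disk, which is generated by small loops about the punctures); the only imprecision is that ``orientation-preserving'' alone does not force the induced map on $H_1(D^2\setminus\{p_i\})\cong\BZ$ to be $+1$ rather than $-1$ --- what does is that $f$ is the identity on $\partial D_n$, and the boundary circle generates that homology group.

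The genuine gap is in the coordinate $u$. You correctly announce that you must show $u\circ f_{\#}=u$, i.e.\ that the last coordinate of $\hat f_*$ on $H_1(C)\cong\BZ^n\oplus\BZ$ is untouched on \emph{every} class, but what you actually prove is only that the single generator $t=(0,\dots,0,1)$ is fixed. Combined with the first step, this only says that $\hat f_*$ has block form $\left(\begin{smallmatrix} I_n & 0\\ v & 1\end{smallmatrix}\right)$: for a generator $x_j$ with $w_j=1$ and $u=0$ you know the $w_i$-coordinates of $\hat f_*x_j$, but nothing about its $u$-coordinate, so a priori $\hat f_*x_j=x_j+c_jt$. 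The endomorphism $(a_1,\dots,a_n,b)\mapsto(a_1,\dots,a_n,b+a_1)$ preserves every $w_i$ and fixes $t$ yet is not the identity, so your two facts do not suffice; you must also control $u$ on the images of the remaining generators. The paper closes exactly this point by noting that $u$ factors through the inclusion of $C$ into the configuration space of two points in the \emph{unpunctured} disk $D^2$, where $f$ is isotopic to the identity rel boundary by the Alexander trick and hence acts trivially on $\pi_1$. Alternatively, you could finish your own computation by representing $x_j$ as $\{\zeta_j,d_2\}$ with one point constant at $d_2\in\partial D_n$ and checking directly that $u(\{f\circ\zeta_j,d_2\})$ equals twice the winding number of $f\circ\zeta_j$ about the boundary point $d_2$, which vanishes because the loop stays in the open disk.
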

\begin{proof}
We need to verify that the invariants $w_i$ and $u$ are preserved by $f_{\#}$. 

For $u$, $u \circ f_{\#} = u$ holds because this invariant does not ``see" the punctures, i.e. $u$ factors through the embedding $D_n\to D^2$, where $D^2$ designates the (unpunctured) unit disk. Forgetting the punctures, all homeomorphisms are isotopic to the identity (Alexander trick \cite{F-M}), so that $u \circ f_{\#} = u$.

For $w_i$, it comes from the fact that the equality $w_i \circ f_{\#} = w_i$ holds for small loops encircling the punctures, and then for arbitrary loops since it only depends on the homology class in the first homology group of $D_n$ which is generated by these small loops.

Here, the difference with construction in \cite{Big1} is that we need $f$ to fix the punctures. Otherwise, a small circle encircling a puncture transported by $f$ could count $+1$ for a different winding number before and after the application of $f$.

\end{proof}

This lemma implies that, in the case where $f$ does not permute punctures, $\hat{f}$ uniquely lifts to a map $\tilde{f}: \widetilde{C} \to \widetilde{C}$ fixing any lift of $c$, and that $\tilde{f}$ commutes with covering deck-transformations. Therefore it induces an $\Laurent$-linear automorphism $f_{*}$ of $\CH$, that is an invariant of the isotopy class of $f$. Consequently, it defines a representation of the pure braid group.  

\begin{defn}[Colored BKL representation]\label{coloredBKLrep}
The {\em colored Bigelow--Krammer--Lawrence} representation of the pure braid group is:
\[
\Laurent\lbrack \PBn \rbrack \to \Aut _{R}(\CH) \text{ , } \left[ f \right] \mapsto f_{*}
\]
where $\PBn$ refers to the pure mapping class group of the punctured disk, which corresponds exactly to (isotopy classes of) homeomorphisms fixing punctures pointwise.
\end{defn}

What follows immediately, is that by specializing every variables $q_i$ to the same variable $q$ we obtain the BKL representation of $\PBn$ as a subgroup of $\Bn$, so that the following holds.

\begin{proposition}[\cite{Kra} \cite{Big1}]
The colored BKL representation of $\PBn$ is faithful.
\end{proposition}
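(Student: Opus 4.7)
The plan is a direct specialization argument, exactly as suggested by the remark preceding the statement. The colored BKL representation lives on $\CH = H^{lf}_2(\widetilde{C}, \BZ)$, viewed as a module over $\Laurent = \BZ[q_1^{\pm 1}, \ldots, q_n^{\pm 1}, t^{\pm 1}]$, while Bigelow's original uncolored BKL representation is built from the intermediate cover $C' \to C$ associated to the composition $\epsilon \circ \phi$, where $\epsilon \colon \BZ^{n+1} \to \BZ^2$ collapses every generator $q_i$ to a single generator $q$. Since $\widetilde{C}$ factors through $C'$, and the pure braid action commutes with all deck transformations, the matrix of $f_{*}$ on a chosen $\Laurent$-basis of $\CH$ specializes, under the ring homomorphism $s \colon \Laurent \to \BZ[q^{\pm 1}, t^{\pm 1}]$ sending $q_i \mapsto q$ and $t \mapsto t$, to the matrix of the corresponding uncolored BKL automorphism.

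Given this compatibility, faithfulness follows immediately. Suppose $\alpha \in \PBn$ acts as the identity on $\CH$; applying $s$ coefficient-wise, the action of $\alpha$ on the uncolored BKL module is also the identity. By the theorems of Bigelow \cite{Big1} and Krammer \cite{Kra}, the uncolored BKL representation is faithful on the full braid group $\Bn$, hence faithful on the subgroup $\PBn \subset \Bn$. Therefore $\alpha = 1$ in $\PBn$, which proves the claim.

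The main (and essentially only) obstacle is to justify the compatibility of the colored and uncolored constructions with the specialization $s$. This is a naturality statement for the homology of a tower of abelian covers $\widetilde{C} \to C' \to C$: the Hurewicz morphism, the invariants $w_i$ and $u$ introduced in Section \ref{constructionandfaithfulness}, and the induced action $f_{*}$ are all compatible with the quotient $\BZ^{n+1} \to \BZ^2$ determined by $\epsilon$, so that the $\Laurent$-module structure and braid action on $\CH$ restrict correctly along $s$. This is an immediate consequence of the definitions given above, and requires no additional input beyond the explicit description of $\phi$.
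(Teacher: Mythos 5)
Your proposal is correct and follows essentially the same route as the paper, which also deduces faithfulness by observing that specializing all $q_i$ to a single variable $q$ recovers the uncolored BKL representation and then invoking Bigelow and Krammer. The only difference is that you spell out the naturality of the specialization along the tower of covers $\widetilde{C}\to C'\to C$, which the paper treats as immediate.
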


To get a representation of the whole braid group, one has to consider $\CH \otimes \Laurent \lbrack \Sk_n \rbrack$, see Section \ref{pureandcolored}. 

\begin{defn}[Colored BKL representation]\label{definitioncoloredBKL}
The {\em colored Bigelow--Krammer--Lawrence} representation of the braid group is:
\[
\Laurent\lbrack \Bn \rbrack \to \Aut _{R}(\CH \otimes \Laurent \lbrack \Sk_n \rbrack).
\]
\end{defn}

\subsection{Pairing between forks and noodles}\label{pairingbetweenforkandnoodles}


We recall definitions of {\em forks} and {\em noodles}, together with a pairing between these objects. Everything is adapted from \cite{Big1}.

\begin{defn}[Fork, $m=2$]\label{forksm=2}
A {\em fork} is  an embedded tree $F\in D_n$ with four vertices $d_1, p_i, p_j$, and $z$ such that $F \cap \partial D_n = \{ d_1 \}$, $F$ intersects the punctures only in $p_i, p_j$, and all three edges have $z$ as a vertex. 
\begin{itemize}
\item The edge containing $d_1$ is called the handle of $F$ and denoted $H(F)$.
\item The union of other two edges is called the {\em tine} of $F$ and denoted $T(F)$.
\item The tine is oriented in such a way that it has the handle lying on its right.
\end{itemize}
\end{defn}

For any fork $F$ we construct an associated surface $\widetilde{\Sigma}$ in $\widetilde{C}$ as follows. First let $F'$ be the parallel fork of $F$ with a parallel tine with same endpoints and parallel handle based on $d_2$.
We define the following surface of $C$:
\[
\Sigma(F) = \left\{ \{x,y\} \text{ s.t. } x  \in T(F)\backslash \{ p_1, \ldots, p_n\} \text{ } ,  y \in T(F') \backslash \{ p_1, \ldots, p_n\}  \right\} .
\]
In order to get a surface of $\widetilde{C}$ we need to choose a lift of $\Sigma(F)$. We use the handle to do so. Let $\tilde{\beta}$ be the lift beginning at $\tilde{c}$ of $\{ \beta_1 ,\beta_2 \}$ where $\beta_1, \beta_2$ are respectively the handle of $F$ and $F'$ starting on $d_1$ and $d_2$. Let $\widetilde{\Sigma}(F)$ be the lift of $\Sigma(F)$ which contains $\tilde{\beta}(1)$. 

\begin{defn}[Noodle]
A {\em Noodle} is an arc embedded in $D_n$ going from $d_1$ to $d_2$.
\end{defn}

We construct a surface associated to $N$ as follows:
\[
\Sigma(N) = \left \lbrace \{x,y \} \in C \text{ s.t. } x,y \in N \right \rbrace ,
\]
and then we choose $\widetilde{\Sigma} (N) $ to be the lift of $\Sigma(N)$ which contains $\tilde{c}$.

Let $F$ be a fork and $N$ a noodle, and let $\widetilde{\Sigma} (F)$ and $\widetilde{\Sigma} (N)$ the associated surfaces of $\widetilde{C}$. Suppose that $T(F)$ and $N$ intersect transversely in some points $z_1 , \ldots, z_l$, and $T(F')$ and $N$ intersect transversely in $z'_1, \ldots, z'_l$ such that $z_i$ and $z'_i$ are joint by a short piece of $N$ not containing any other intersection point. Surfaces $\widetilde{\Sigma}(F)$ and $\widetilde{\Sigma} (N)$ do not intersect necessarily because of the choice of the lift, but there exists a unique monomial $m_{i,j} = \prod_{k\in \{ 1, \ldots, l\}} q_k^{w_k(\xi_{i,j})} t^{u_{i,j}}$ such that $m_{i,j} \widetilde{\Sigma} (N) $ intersects $\widetilde{\Sigma}(F)$ at a point lying over $\{ z_i , z'_j\}$. Let $\epsilon _{i,j}$ be the sign of the intersection. We define the pairing as follows:
\begin{equation}\label{noodleforkpairingCBKL}
\langle N,F \rangle = \sum_{i=1}^{l} \sum_{j=1}^{l} \epsilon_{i,j} m_{i,j} .
\end{equation}

To compute explicitly $m_{i,j}$ we define a path of $\tilde{C}$ using composition of the following arcs:

\begin{itemize}
\item $\alpha _1$ from $d_1$ to the handle of $F$, $\alpha_2$ from $d_2$ to the handle of $F'$,
\item $\beta_1$ from $z$ to $z_i$ along $T(F)$, $\beta_2$ from $z'$ to $z'_j$ along $T(F')$,
\item $\gamma_1$ from $z_i$ to one of the $d_i$'s in such a way that it doesn't cross $z'_j$,
\item $\gamma_2$ from $z'_j$ to one of the $d_i$'s in such a way that it doesn't cross $z_i$.
\end{itemize}

Then we define the loop $\delta_{i,j}$ of $C$:
\[
\delta_{i,j} = \{\alpha_1,\alpha_2 \} \{\beta_1,\beta_2 \} \{\gamma_1,\gamma_2 \}
\]

Let $\widetilde{\delta}_{i,j}$ be the lift of $\delta_{i,j}$ to $\widetilde{C}$ beginning at $\tilde{c}$. This path goes first from $\widetilde{c}$ to $\widetilde{\Sigma}(F)$ then to the lift of $\{z_i, z'_j \}$ lying over $\widetilde{\Sigma} (F) \cap m_{i,j} \widetilde{\Sigma} (N)$, so that it ends in $m_{i,j} \tilde{c}$. It is a path from $\tilde{c}$ to $m_{i,j} \tilde{c}$, so that we have:
\[
m_{i,j} = \phi (\delta_{i,j}).
\]

From \cite[Claim 3.3]{Big1}, we have:

\[
\epsilon_{i,j} = -m_{i,i} m_{j,j} m_{i,j} (q=1,t=1)  .
\]
as the intersection sign is computable in $C$ (does not depend on which covering one lifts the surfaces to), it is the same as for BKL representations, see \cite[Equation~(1)]{Big1}.

Bigelow's proof of the faithfulness involves the following two lemmas.

\begin{lemma}[Basic Lemma, \cite{Big1}]\label{basiclemma}
The noodle-fork pairing is well defined. Furthermore, if $\left[ \sigma \right]$ lies in the kernel of the Bigelow Krammer-Lawrence representation, then
\[
\langle N,F \rangle = \langle N, \sigma(F) \rangle
\]
for any fork $F$ and noodle $N$.
\end{lemma}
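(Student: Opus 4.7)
The plan is to interpret $\langle N,F\rangle$ homologically, as an equivariant intersection number between the lifted surfaces $\widetilde{\Sigma}(N)$ and $\widetilde{\Sigma}(F)$ in $\widetilde{C}$. Once this is done, both well-definedness and the kernel-invariance become consequences of general facts about deck-equivariant intersections on $\widetilde{C}$.

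First I would show that a fork $F$ determines a class $[\widetilde{\Sigma}(F)]\in\CH=H^{lf}_{2}(\widetilde{C},\BZ)$ depending only on the isotopy class of $F$ rel $\partial D_n$ and the punctures, and that a noodle $N$ determines a class in the relative homology $H_{2}(\widetilde{C},p^{-1}(\partial D_n))$ (where $p:\widetilde{C}\to C$ is the covering). Here the handle and boundary conventions of Definition~\ref{forksm=2} are what pin down a canonical lift and hence fix the monomial ambiguity coming from the deck group. I would then rewrite \eqref{noodleforkpairingCBKL} in the form
\[
\langle N,F\rangle \;=\; \sum_{g\in \BZ^{n+1}} \bigl(\widetilde{\Sigma}(N)\cdot g\cdot\widetilde{\Sigma}(F)\bigr)\, g^{-1},
\]
where the inner product is the algebraic intersection number of the two oriented surfaces in $\widetilde{C}$. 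The monomial $m_{i,j}=\phi(\delta_{i,j})$ constructed from the path $\delta_{i,j}$ is exactly the deck transformation $g$ carrying the basepoint lift to the given intersection, and $\epsilon_{i,j}$ is the local intersection sign; this identifies the explicit formula with the equivariant intersection pairing above.

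Well-definedness then reduces to a standard transversality argument. Any isotopy of $F$ or $N$ that creates or destroys a pair of tine--noodle intersections produces two contributions $(z_i,z'_j)$ whose associated paths $\delta_{i,j}$ differ by a null-homotopic loop, so the two monomials $m_{i,j}$ are equal while the local signs $\epsilon_{i,j}$ are opposite; they cancel in the sum. For the second assertion, any self-diffeomorphism $\sigma$ of $D_n$ fixing the punctures pointwise lifts canonically (by Lemma~\ref{phipreserved2}) to an $\Laurent$-equivariant homeomorphism $\widetilde{\sigma}:\widetilde{C}\to\widetilde{C}$ fixing the basepoint lift $\widetilde{c}$ and acting as the identity on the lifted boundary. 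In particular $\widetilde{\Sigma}(\sigma(F))=\widetilde{\sigma}\bigl(\widetilde{\Sigma}(F)\bigr)$, hence $[\widetilde{\Sigma}(\sigma(F))]=\sigma_{*}[\widetilde{\Sigma}(F)]$ in $\CH$, while the noodle class $[\widetilde{\Sigma}(N)]$ is literally fixed since $\sigma$ is the identity on $\partial D_n$. If $[\sigma]$ lies in the kernel of the colored BKL representation then $\sigma_{*}=\operatorname{Id}$ on $\CH$, and the homological interpretation of the pairing gives $\langle N,F\rangle=\langle N,\sigma(F)\rangle$ immediately.

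The delicate step I expect to be the main obstacle is justifying the homological nature of the pairing in a non-compact setting: one must verify that $\widetilde{\Sigma}(N)\cap g\widetilde{\Sigma}(F)$ is finite for each deck transformation $g$ and that only finitely many $g$ contribute, so that the sum lies in $\Laurent$ rather than in a completion. This requires controlling the geometry of both surfaces near the punctures and near $\partial D_n$, and is precisely why Bigelow's handle/tine conventions and the choice of locally finite homology on the fork side (versus ordinary relative homology on the noodle side) are essential; in the colored setting one additionally needs to check that the finer local system $\phi$ of Definition~\ref{localsystemcoloredBKL} does not spoil the local finiteness argument used by Bigelow in the uncolored case.
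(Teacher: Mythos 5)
The paper does not prove this lemma at all: it is quoted verbatim from Bigelow \cite{Big1} and used as a black box (only Lemma \ref{phipreserved2} and the surrounding construction are the paper's own contribution). So the comparison is really between your sketch and Bigelow's original argument, and on that score your outline is essentially the standard one and is sound: reinterpret $\langle N,F\rangle$ as the $\Laurent$-equivariant intersection pairing between the locally finite class $[\widetilde{\Sigma}(F)]$ and the relative class $[\widetilde{\Sigma}(N)]$, check that the explicit monomials $m_{i,j}=\phi(\delta_{i,j})$ and signs $\epsilon_{i,j}$ compute exactly that pairing, and then deduce invariance under kernel elements from $\sigma_*=\operatorname{Id}$ on $\CH$ together with the fact (Lemma \ref{phipreserved2}, which is where the colored/pure subtlety lives) that $\widetilde{\sigma}$ is deck-equivariant, fixes $\tilde c$, and satisfies $\widetilde{\Sigma}(\sigma(F))=\widetilde{\sigma}(\widetilde{\Sigma}(F))$.

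Two small calibration remarks. First, your direct cancellation argument for well-definedness is slightly understated: removing a bigon between $T(F)$ and $N$ kills a pair of points $z_a,z_b$ \emph{and} the corresponding $z'_a,z'_b$, so in the double sum \eqref{noodleforkpairingCBKL} one must check cancellation of all cross terms $(a,j),(b,j),(i,a),(i,b)$ and of the four terms indexed by $\{a,b\}\times\{a,b\}$, not just of one pair; the cleaner route (and Bigelow's) is to invoke the general duality pairing $H^{lf}_2(\widetilde C)\times H_2(\widetilde C,p^{-1}(\partial))\to\Laurent$ once and for all, which is what your displayed formula already encodes. Second, the finiteness issue you flag as the main obstacle is in fact easy here: the tine and the noodle are compact arcs meeting transversely, so $\Sigma(F)\cap\Sigma(N)$ is a finite set of points $\{z_i,z'_j\}$ and only finitely many deck translates contribute; the finer colored local system changes only which monomial each point carries, not the finiteness. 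Neither point is a genuine gap.
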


This will allow us to use this pairing for computing matrices, see next section. The following key lemma is stated for informal reasons.

\begin{lemma}[Key Lemma, \cite{Big1}]\label{keylemma}
Let $N$ be a noodle and let $F$ be a fork. Then $\langle N,F \rangle = 0$ if and only if $N$ and $T(F)$ do not intersect (up to isotopy).
\end{lemma}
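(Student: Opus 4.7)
The plan is to prove each implication separately. For the ``if'' direction, if $N$ and $T(F)$ admit disjoint representatives in their isotopy classes, I would pick such representatives; then $l=0$ in the sum (\ref{noodleforkpairingCBKL}), so the sum is empty and $\langle N,F\rangle=0$. By Lemma~\ref{basiclemma} the pairing is independent of the chosen transverse representatives, so the vanishing persists for any computation of it.

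For the converse my plan is to reduce to Bigelow's original uncolored Key Lemma, proved in \cite{Big1}. Consider the ring augmentation
\[
\aug : \BZ[q_1^{\pm 1}, \ldots, q_n^{\pm 1}, t^{\pm 1}] \longrightarrow \BZ[q^{\pm 1}, t^{\pm 1}], \qquad q_i \mapsto q, \; t \mapsto t.
\]
Topologically this corresponds to factoring the Hurewicz map of Definition~\ref{localsystemcoloredBKL} through the ``total winding'' morphism $\BZ^n\oplus \BZ \to \BZ \oplus \BZ$ that sums the first $n$ coordinates; the resulting intermediate cover of $C$ is precisely the one used by Bigelow in \cite{Big1}. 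Pushing the surfaces $\widetilde{\Sigma}(F)$ and $\widetilde{\Sigma}(N)$ down to this intermediate cover and comparing term by term with the definition (\ref{noodleforkpairingCBKL}) should yield the naturality identity
\[
\aug\bigl(\langle N, F\rangle\bigr) = \langle N, F\rangle_{\text{Big}},
\]
where the right-hand side denotes Bigelow's original noodle--fork pairing. Assuming the colored pairing is zero, its image under $\aug$ vanishes, so Bigelow's pairing vanishes as well, and his Key Lemma delivers exactly the conclusion that $N$ and $T(F)$ can be isotoped to be disjoint.

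The only technical step in the plan is the naturality identity. One needs to verify that the lifted base point $\tilde{c}$ pushes down to the corresponding base point in the intermediate cover, that the monomials $m_{i,j}$ transform correctly under $\aug$ (immediate since winding numbers are additive and the projection $\BZ^n \oplus \BZ \to \BZ \oplus \BZ$ sums them), and that the intersection signs $\epsilon_{i,j}$ are preserved. The last point is free because the $\epsilon_{i,j}$ are computed downstairs in $C$, as explicitly noted after (\ref{noodleforkpairingCBKL}). I expect this verification to be routine but slightly bookkeeping-heavy, and it is the main obstacle to making the argument rigorous. All the genuine combinatorial content of the statement, namely the non-cancellation of monomials arising from essential intersections of $N$ and $T(F)$, is absorbed wholesale into Bigelow's existing argument and need not be re-run in the colored setting.
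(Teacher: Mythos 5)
Your proposal is correct. Note that the paper does not actually prove this lemma: it states it ``for informal reasons'' and simply cites \cite{Big1}, so there is no argument to compare against line by line. Your two-step plan supplies exactly the missing content. The ``if'' direction is immediate from the emptiness of the sum (\ref{noodleforkpairingCBKL}) together with the well-definedness guaranteed by Lemma \ref{basiclemma}. For the converse, the reduction through the augmentation $q_i\mapsto q$ is the natural route and is consistent with how the paper itself relates the colored and uncolored settings (the remark after Definition \ref{localsystemcoloredBKL} describes the uncolored local system as the post-composition of the Hurewicz map with precisely this augmentation, and the faithfulness of the colored representation is deduced from the uncolored one by the same specialization). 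The logical direction is used correctly: vanishing of the colored pairing is the stronger hypothesis, so applying the ring homomorphism $\aug$ kills the uncolored pairing and Bigelow's Key Lemma applies; possible cancellations under $\aug$ are irrelevant for this implication. The naturality identity is indeed routine, since the intersection points and signs $\epsilon_{i,j}$ are computed downstairs in $C$ (as the paper notes explicitly) and the monomials $m_{i,j}=\phi(\delta_{i,j})$ transform by summing winding numbers. The one thing you might make explicit when writing this up is that the intermediate cover you describe, and the base point and surface lifts in it, are the images of $\tilde c$, $\widetilde{\Sigma}(F)$ and $\widetilde{\Sigma}(N)$ under the covering projection from $\widetilde C$, so that the term-by-term comparison is literally a push-forward and not merely an analogy.
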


\subsection{Matrices for colored BKL representations}

Inspired by Part 4 of \cite{Big1} we give explicit matrices for colored BKL representations.

\begin{proposition} \label{actioncBKL}

$\CH$ is a free $\Laurent$-module. It has a basis:
\[
\{ v_{j,k} : 1\le j \le k \le n \}
\]
The group $\Bn$ acts on $\CH \otimes \Laurent \left[ \Sk_n \right]$ by the induced action from $\PBn$. We give the action of the standard generators $\sigma_i$ on $\CH \otimes 1$, let $\tau = (i,i+1)$. 
\[
\sigma_i(v_{j,k}\otimes 1) = \left \lbrace \begin{array}{ll}
v_{j,k} \otimes \tau, & i \not\in \{j-1,j,k,k-1,k\} \\
q_{j-1} v_{i,k} \otimes \tau + (q_{j-1}^2-q_{j-1})v_{i,j} \otimes \tau + (1-q_{j-1})v_{j,k} \otimes \tau , & i=j-1 \\
v_{j+1,k} \otimes \tau , & i=j\neq k-1 \\
q_{k-1} v_{j,i} \otimes \tau +(1-q_{k-1})v_{j,k}  \otimes \tau + (q_{k-1}^2-q_{k-1})tv_{i,k}  \otimes \tau, & i = k-1 \neq j \\
v_{j,k+1} \otimes \tau, & i=k \\
tq_{j}^2 v_{j,k}  \otimes \tau, & i=j=k-1
\end{array} \right. .
\]
One remarks that the only variable appearing above is $q_i$. We wanted to emphasize the links with indexes of vectors, which could be useful to follow permutations of punctures and variables. 
\end{proposition}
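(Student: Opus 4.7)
The plan is to follow Bigelow's strategy from \cite[Section~4]{Big1}, adapting it to the colored setting where windings around distinct punctures contribute distinct variables $q_i$.

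First I would fix a basis of \emph{standard forks} $F_{j,k}$ indexed by pairs $1\le j<k\le n$: each $F_{j,k}$ has handle starting at $d_1$ and tine given by an embedded arc from $p_j$ to $p_k$ lying just above the real axis and touching no other puncture. The associated classes $v_{j,k}\in\CH$ form a basis over $\Laurent$ by the colored analog of the uncolored basis statement in \cite{Big1}; freeness and the fact that $\binom{n}{2}$ classes generate $\CH$ can be checked directly by pairing with the dual noodles $N_{a,b}$ described below.

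Next I would handle the easy cases geometrically. The generator $\sigma_i$ is realised by a half-twist supported in a small disc around $[p_i,p_{i+1}]$, so its effect on $F_{j,k}$ is determined by the position of this support relative to the tine. When $i\notin\{j-1,j,k-1,k\}$ the support is disjoint from a neighbourhood of the tine, hence the fork is fixed and $v_{j,k}\otimes 1\mapsto v_{j,k}\otimes\tau$. When $i=j$ with $j\ne k-1$, or when $i=k$, the half-twist moves one endpoint of the tine by one position without catching a third puncture, so $\sigma_i\cdot F_{j,k}$ is fork-isotopic to $F_{j+1,k}$, respectively $F_{j,k+1}$.

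For the cases $i=j-1$, $i=k-1$, and the diagonal case $i=j=k-1$, $\sigma_i$ produces a non-standard fork: the tine either runs around an adjacent puncture or has both endpoints swapped. I would \emph{resolve} such a fork into a combination of standard forks by pulling the tine across the offending puncture $p_\ell$; each such move replaces the fork by two terms with a relative coefficient $q_\ell$ coming from the winding of the local system around $p_\ell$, and an additional factor of $t$ appears whenever the move forces the two strands $\xi_1,\xi_2$ of $\Sigma(F)$ to rotate around each other. This is exactly the geometric origin of the coefficients $q_{j-1},\,q_{j-1}^2-q_{j-1},\,1-q_{j-1}$ in the case $i=j-1$, of the symmetric coefficients carrying the extra $t$ in the case $i=k-1$, and of the factor $t q_j^2$ in the diagonal case.

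To pin down the coefficients rigorously I would use the noodle-fork pairing of Section \ref{pairingbetweenforkandnoodles}. For each pair $(a,b)$ I would choose a dual noodle $N_{a,b}$ enclosing the punctures $p_a,\ldots,p_b$; by Lemma \ref{keylemma}, $\langle N_{a,b},F_{j,k}\rangle=0$ unless the tine of $F_{j,k}$ crosses $N_{a,b}$, and a direct computation shows the resulting matrix is triangular in a suitable order on the pairs $(j,k)$. By the invariance property of Lemma \ref{basiclemma}, computing $\langle N_{a,b},\sigma_i\cdot F_{j,k}\rangle$ geometrically and inverting the triangular system forces the coefficients expressing $\sigma_i\cdot v_{j,k}$ in the basis. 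The main technical obstacle is the careful bookkeeping of the monomials $m_{i,j}=\prod_\ell q_\ell^{w_\ell(\delta_{i,j})}\,t^{u(\delta_{i,j})}$ in the twisted cases: the winding numbers of the closing loops $\delta_{i,j}$ depend on the relative positions along $N_{a,b}$ of the intersection points $z_i,z'_j$, and one must verify that the combination of these monomials with the signs $\epsilon_{i,j}=-m_{i,i}m_{j,j}m_{i,j}(q=1,t=1)$ assembles exactly into the stated formula.
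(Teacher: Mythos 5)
Your computation of the braid action (easy cases handled geometrically, the cases $i=j-1$ and $i=k-1$ pinned down by pairing against noodles via Lemma \ref{basiclemma}) is essentially the route the paper takes; the paper organizes the hard cases through Claim \ref{forkdecomposition}, which restricts $\sigma_i(F_{j,k})$ a priori to a combination of three specific standard forks, and then solves a small linear system from pairings with $N_i$ and $N_{2,3}$ --- your ``triangular system over the noodles $N_{a,b}$'' is a mild variant of the same idea and would work once you know which forks can appear.

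The genuine gap is in your first step, where you take the standard fork classes as a basis of $\CH$ and propose to certify freeness ``by pairing with the dual noodles.'' This cannot work over $\Laurent$: the noodle--fork pairing matrix is only invertible after passing to the fraction field (its diagonal and off-diagonal entries involve non-unit Laurent polynomials), and in fact the forks generate a \emph{proper} submodule of $\CH$ --- see the discussion in Section \ref{Generalcontext} and \cite[Proposition~7.2]{Jules1}, and Remark \ref{thisremark}, where $F_{j,k}=\lambda v_{j,k}$ for a $\lambda\in\Laurent$ that is not a unit. The paper's proof therefore begins with a different and indispensable computation: it takes Bigelow's Cayley-complex presentation $\langle\CG\,|\,\CR\rangle$ of $\pi_1(C)$ (Proposition \ref{pi1C}), applies Fox free differential calculus to write the boundary map $\partial\colon C_2\to C_1$ explicitly, and exhibits $v_{j,k}$ as an explicit generator of the rank-one kernel of each $3\times 3$ block. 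That is what establishes that $\CH$ is free with basis $\{v_{j,k}\}$; the forks then enter only as convenient (non-unit) multiples of these basis vectors on which the geometric action and the pairing can be computed. Without this step your argument computes the action only on the fork-generated submodule and does not prove the freeness/basis assertion of the proposition.
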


\begin{proof}

First we compute the homology using the Cayley complex defined in \cite[Section~4]{Big1}. For $j= 1, \ldots, n$, let $\zeta_j$ be the loop based at $d_1$ and running once counterclockwise around $p_j$. Let $x_j$ be the loop $\{ \zeta_j, d_2\} $ of $C$. Let $\tau_1$ be an arc from $d_1$ to $d_2$ and $\tau_2$ from $d_2$ to $d_1$ such that $\tau_1 \tau_2$ is a simple closed curve oriented counterclockwise and enclosing no puncture points, and let $y$ be the loop $\{ \tau_1, \tau_2 \}$ of $C$. We define the set $\CG$:
\[
\CG = \{ x_1, \ldots, x_n , y \}.
\]
Now we define some relations, for $j \in \{ 1, \ldots, n\}$:
\[
r_{j,j} = \left[x_j,y x_j y \right],
\]
and for $1\le j < k \le n$:
\[
r_{j,k} = \left[ x_j, y x_k y^{-1} \right]
\]
where the bracket refers to the commutator, and we define the set $\CR = \{ r_{j,k} \text{ for } 1\le j \le k \le n\}$.

\begin{proposition}[{\cite[Section~4]{Big1}}]\label{pi1C}
Let $K$ be the Cayley Complex of $\langle \CG | \CR \rangle $. Then $C$ is homotopically equivalent to $K$. It follows that a presentation of $\pi_1(C)$ is given by: $\langle \CG | \CR \rangle$.
\end{proposition}

%
%

Now we can compute $\CH$ using the Fox derivatives (see Definition \ref{Foxcalculus}). We let $C _1$ and $C _2$ be the free $\Laurent$-modules with basis $\{ e_g : g \in \CG \}$ and $\{ f_r : r \in \CR \}$ respectively. For any word in $\CG$, we define $\left[ w \right] \in C_1$ according to these rules:

\[
\begin{array}{rcl}
\left[ 1  \right] & = & 0 \\
\left[ gw  \right] & = & e_g + \phi(g) \left[w \right]  \\
\left[ g^{-1}w  \right] & = & \phi(g)^{-1} (\left[ w \right] - e_g)
\end{array}
\]
for $g \in \CG$. Then $H_2 (\widetilde{C})$ is the kernel of the map $\partial: C_2 \to C_1$ defined by $\partial f_r = \left[ r \right]$.

The computation gives:

\[
\partial f_r = \left \lbrace \begin{array}{ll}
(q_jt+1)((1-t)[x_j]+(q_j-1)[y]) & \text{ if } r=r_{j,j} \\
(1-q_k)[x_j]+(1-q_k)(q_j-1)[y]+t(q_j-1)[x_k] & \text{ if } r=r_{j,k}
\end{array} \right. .
\]

If we restrict the morphism to the space $\Vect (f_{j,j},f_{j,k},f_{k,k})$, we get the matrix:
\[
\begin{pmatrix}
(1-t)(q_jt+1) & (1-q_k) & 0 \\
(q_j-1)(q_jt+1) & (1-q_k)(q_j-1) & (q_k-1)(q_kt+1) \\
0 & t(q_j-1) & (1-t)(q_kt+1)
\end{pmatrix}
\]
which corresponds to the only non-vanishing block of the application $\partial$. Each block has a rank one kernel generated by the vector:
\[
v_{j,k} = -(1-q_k)(q_kt+1)f_{j,j} + (1-t)(q_kt+1)(q_jt+1)f_{j,k} - t(q_j-1)(q_jt+1)f_{k,k}
\]
so that we get a basis of $H_2(\widetilde{C})$, namely $\{ v_{j,k} : 1 \le j < k \le n \}$.

Now a nice way to compute the matrices for the action of $\sigma_i$, is to find forks $F_{j,k}$ which correspond to the vector $v_{j,k}$, and to use the pairing with some noodles to get the expression of vectors in the fork basis, knowing the basic lemma (Lemma \ref{basiclemma}). In what follows we still abusively use $F$ to designate both the fork and the associated homology class of $\widehat{\Sigma} (F)$.

Let's fix $d_1$ and $d_2$ lying in the lower half plane of the boundary of $D_n$. 

\begin{defn}[Standard fork]\label{standardforkcBKL}
For each $1 \le j < k \le n$, let $F_{j,k}$ be the fork that lies entirely in the lower half of $D_n$ such that the endpoints of $T(F_{j,k})$ are the punctures $p_j$ and $p_k$, we usually call it {\em a standard fork}.
\end{defn}

\begin{rmk}\label{forkissue}
In \cite[Section~7.1]{Jules1} one can find a generalization of forks to higher configuration spaces of points ($m\ge 2$), that can also be found in \cite{Itogarside}. We want to warn the reader that Bigelow's standard forks defined above are {\em not} multiforks according to the definitions from \cite{Jules1,Itogarside}. Indeed in the multiforks the segments only connect consecutive punctures (but maybe different ones) while Bigelow's standard forks can join any two different punctures (but the second configuration point is then taken in the parallel fork, having same punctures as ends). 
\end{rmk}

\begin{rmk}\label{thisremark}
There exists $\lambda \in \Laurent$ such that for all $j,k \in \lbrace 1 , \ldots , n \rbrace$:
\[ F_{j,k} = \lambda v_{j,k} \]
(in terms of homology classes). The proof of this fact is exactly the same as the one for the uncolored version, see \cite{Big1} proof of Theorem 4.1. This is achieved noticing that it is sufficient to consider the homology module restricted to the disk containing $F_{j,k}$, its endpoints, and no other puncture. 
\end{rmk}

By Remark \ref{thisremark}, we compute the braid action over standard forks. There are cases where $\sigma_i(F_{j,k})$ is directly a standard fork, namely:
\begin{itemize}
\item $i \not\in \{ j-1,j,k-1,k \}$
\item $i=j \neq k-1$
\item $i=k$
\end{itemize}
In the case $i=j=k-1$, the fork $\sigma_i(F_{j,k})$ has the tine edges as $F_{j,k}$ with opposite orientations:
\[
\sigma_i(F_{j,k}) = \left(\vcenter{\hbox{ \begin{tikzpicture}[decoration={
    markings,
    mark=at position 0.5 with {\arrow{>}}}
    ]
\coordinate (w0) at (-1,0) {};
\coordinate (w1) at (1,0) {};
\coordinate (x0) at (-1,-1) {};
\coordinate (x1) at (1,-1) {};

\draw[postaction={decorate}] (w1) -- node[above,pos=0.5] (k0) {} (w0);
\draw[postaction={decorate}] (w1) to[bend left=40] node[pos=0.2] (k1) {} (w0);

\draw[red] (k0) arc (180:0:1) node[above] (k0p) {};
\draw[red] (k0p) to (k0p|-x0);
\draw[red] (k1) arc (180:0:0.6) node[above] (k1p) {};
\draw[red] (k1p) to (k1p|-x1);


\node[gray] at (w0)[left=5pt] {$p_k$};
\node[gray] at (w1)[right=5pt] {$p_j$};
\foreach \n in {w0,w1}
  \node at (\n)[gray,circle,fill,inner sep=3pt]{};

\end{tikzpicture} }}\right) 
\]
where in red is represented the handle, and in black the tine. The handle joins the boundary in $d_1$ outside the parenthesis. 
It follows that it represents the same surface in $C$ as $F_{j,k}$ with the same orientation (both interval are reversed). Then the classes in $\widetilde{C}$ differ by a covering transformation. We obtain that $\sigma_i(v_{j,k}) = t q_j^2 v_{j,k} $. A similar computation is made in \cite{Jules1}, in Example 4.2, using the {\em handle rule} introduced in Remark 4.1 and recalled in the present paper in Remark \ref{handleruless}, that deals with a change of handle for forks. The remaining cases are $i=j-1$ and $i=k-1 \neq j$. The following claim restricts the linear combination, and is proved exactly the same way as Claim 4.2 of \cite{Big1}:
\begin{claim}[{\cite[Claim~4.2]{Big1}}]\label{forkdecomposition}
$\sigma_i(v_{j,k})$ is a linear combination of $v_{j',k'}$ with $j',k'\in \{i,i+1,j,k\}$
\end{claim}
In the case $i=j-1$ for instance, this claim implies that there exists $A,B,C \in \Laurent$ s.t. :
\[
\sigma_i(F_{j,k}) = AF^{\tau}_{i,j}+ BF^{\tau}_{j,k}+ CF^{\tau}_{i,k}
\]
for $\tau = (i,i+1) \in \Sk_n$, where we denote $F^{\tau}_{j,k} : = F_{\tau(j), \tau(k)}$. One remarks that punctures are permuted after the application of $\sigma_i$, and for the colored version issue following the permutation of punctures is important. 
To get $A,B,C$ we pair with noodles. As it only depends on homological class of the surface associated to fork, by pairing some appropriate noodles with the studied forks on one hand and with the standard fork involved in its decomposition on the other, we are able to compute the coefficients of the linear combination. In following Example \ref{examplepairing}, we perform this computation in one of the two remaining cases. One can adapt the computation to the last case. 
\end{proof}

\begin{example}\label{examplepairing}
Let $F$ be the fork corresponding to the image of $F_{2,4}$ after applying the homeomorphism corresponding to the generator $\sigma_1$ of $\Bn$. Considering the Claim \ref{forkdecomposition} we can restrict ourselves to $B_4$ and the study of $D_4$ with only four punctures. This example is enough to deduce the general expression of the action of $\sigma_i$ on the vector $v_{j,k}$ in the case $i=j-1$, which is one of the two remaining cases not entirely treated in the proof of Proposition \ref{actioncBKL}.

First we use Claim \ref{forkdecomposition} to deduce that the class in $H_2(\widetilde{C})$ associated with $F$ has a linear decomposition in terms of standard forks $F^{\tau}_{1,2},F^{\tau}_{1,4}$ and $F^{\tau}_{2,4}$, for $\tau = (1,2) \in \Sk_4$. We use the following notations:
\[
F = AF^{\tau}_{1,2} + BF^{\tau}_{1,4} + CF^{\tau}_{2,4}
\]
where $A,B,C \in \Laurent$ are the coefficients we are looking for. We compute $A$, $B$, $C$ using Pairing \ref{noodleforkpairingCBKL}.

\begin{rmk}\label{remarkmovie}
In order to compute invariants of loops $\delta_{i,j}$'s (see Subsection \ref{pairingbetweenforkandnoodles}), it is useful to draw both paths ($\xi_1,\xi_2$) to observe the value of the invariants $w_i$ while for the invariant $u$ the parametrization is crucial, so that we need to think about the movie of the loop. We draw some in Figure \ref{deltas}.
\end{rmk}

Let $N_i$ be the noodle starting at $d_1$ and passing once clockwise around the puncture $p_i$ before coming back to $d_2$. We get the easy following computation of the pairing with standard forks:

\begin{rmk}
\[
\langle N_i , F_{j,k} \rangle = \left \lbrace \begin{array}{ll}
-q_i & \text{if } i=j \\
q_i^{-1}t^{-1} & \text{if } i=k \\
q_i^{-1}t^{-1}-t^{-1}+1+q_i & \text{if } j<i<k \\
0 & \text{otherwise}

\end{array} \right.
\]
\end{rmk}
%

Similarly we compute:
\[
\langle N_1, F \rangle = - q_2 q_1^2
\]
\[
\langle N_4, F \rangle = q_4^{-1} t^{-1} .
\]

We detail the computation of the pairing of $F$ with $N_3$ (one can realize that it involves exactly the same paths as for $\langle N_i , F_{j,k} \rangle$ above with $j<i<k$). The situation is depicted in Figure \ref{FN3}.
\begin{figure}[h]
\begin{center}
\def\svgwidth{0.7\columnwidth}
\def\svgscale{0.3}
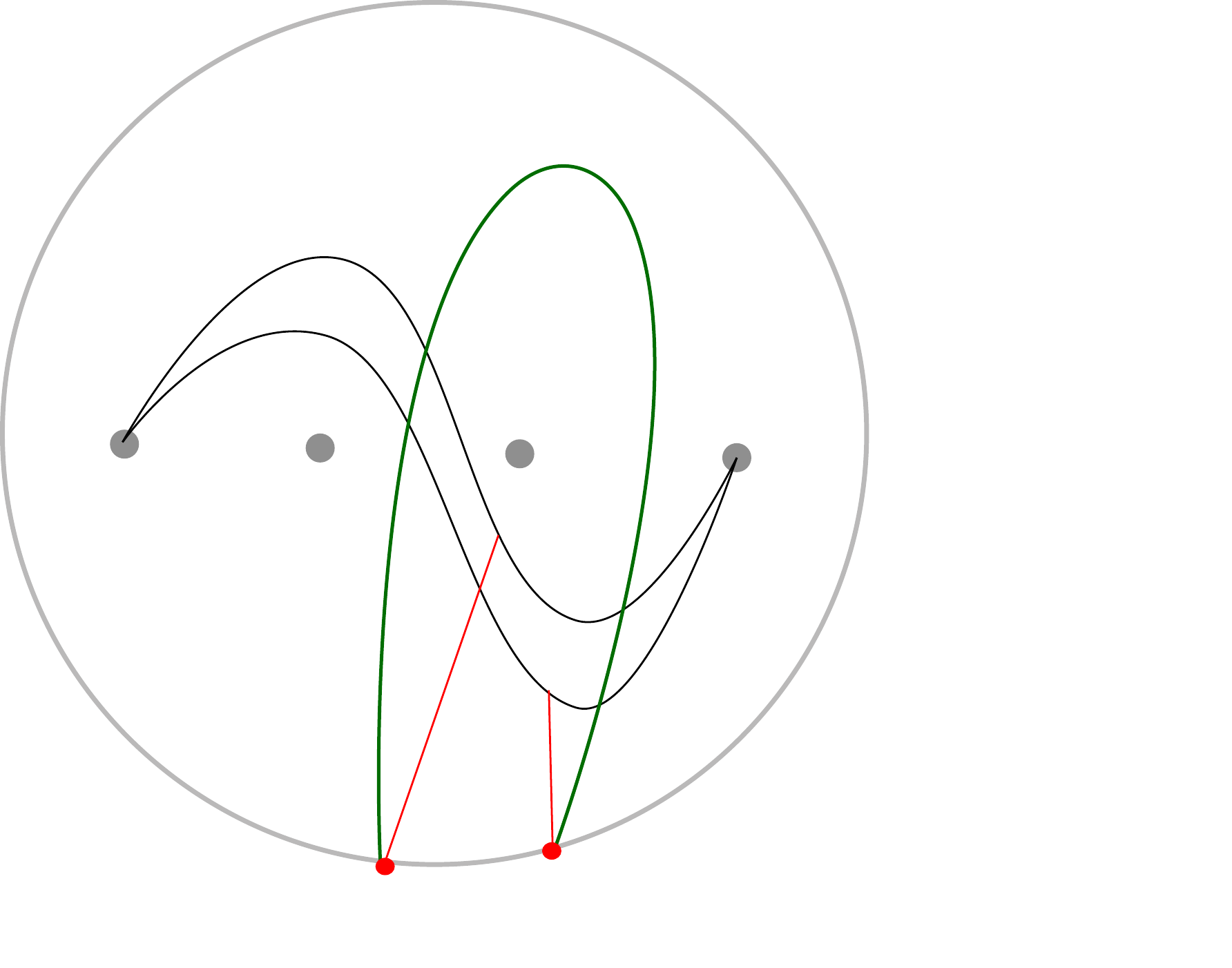
\caption{Intersection of fork $F$ with noodle $N_3$. \label{FN3}}
\end{center}
\end{figure}

$F$ and $N_3$ have two intersection points, the pairing involves four terms:
\begin{itemize}
\item for $\delta_{1,1}$ we get $m_{1,1} = q_3^{-1} t^{-1}$ so that $u_{1,1} = -1$ and that $\epsilon _{1,1} = 1$,
\item for $\delta_{2,2}$ we get $m_{2,2} = q_3$ so that $u_{2,2} = 0$ and that $\epsilon _{1,1} = -1$,
\item for $\delta_{1,2}$ we get $m_{1,2} = 1$ so that $u_{1,2} = 0$ and that $\epsilon _{1,2} = -(-1)^{u_{1,1}+u_{2,2}+u_{1,2}}=1$,
\item for $\delta_{1,2}$ we get $m_{2,1} = t^{-1}$ so that $u_{2,1} = -1$ and that $\epsilon _{2,1} = -(-1)^{u_{1,1}+u_{2,2}+u_{2,1}}=-1$.
\end{itemize}

Beside $\delta_{1,2}$ which is trivial, we draw $\delta_{1,1}$, $\delta_{2,2}$ and $\delta_{2,1}$ in Figure \ref{deltas} from which above computations are immediate. 
Finally:
\[
\langle N_3,F \rangle = q_3^{-1}t^{-1}-t^{-1}+1+q_3 .
\]

\begin{figure}[h]
\begin{center}
\def\svgwidth{0.9\columnwidth}
\def\svgscale{0.3}
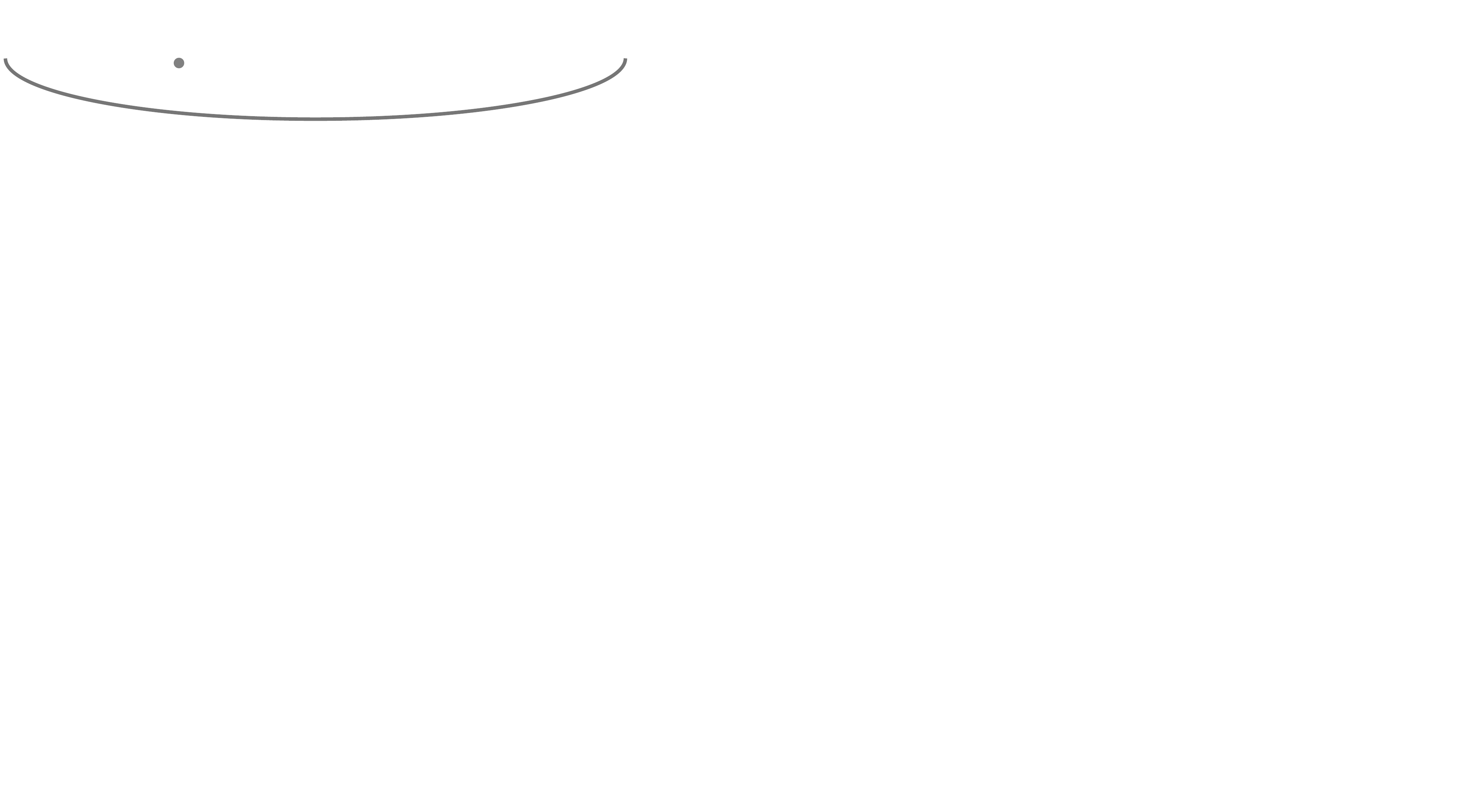
\caption{$\delta_{1,1},\delta_{22}$ and $\delta_{2,2}$ \label{deltas}}
\end{center}
\end{figure}

%

Replacing the computations above in the expression:
\[
\langle N_i, F \rangle = A\langle N_i, F^{\tau}_{1,2} \rangle+B\langle N_i, F^{\tau}_{1,4} \rangle+C\langle N_i, F^{\tau}_{2,4} \rangle
\]
with $i=1$ we get the condition:
\[
A+B = q_2^2
\]
and with $i=3$:
\[
B+C=1 .
\]
We need one more condition. We obtain it by pairing with the noodle $N_{2,3}$ defined as the noodle starting at $d_1$ and running around the punctures $p_2$ and $p_3$ before coming back to $d_2$ (see Figure \ref{N23}, noodle oriented from left to right). 

\begin{figure}[h]
\begin{center}
\def\svgwidth{0.5\columnwidth}
\def\svgscale{0.3}
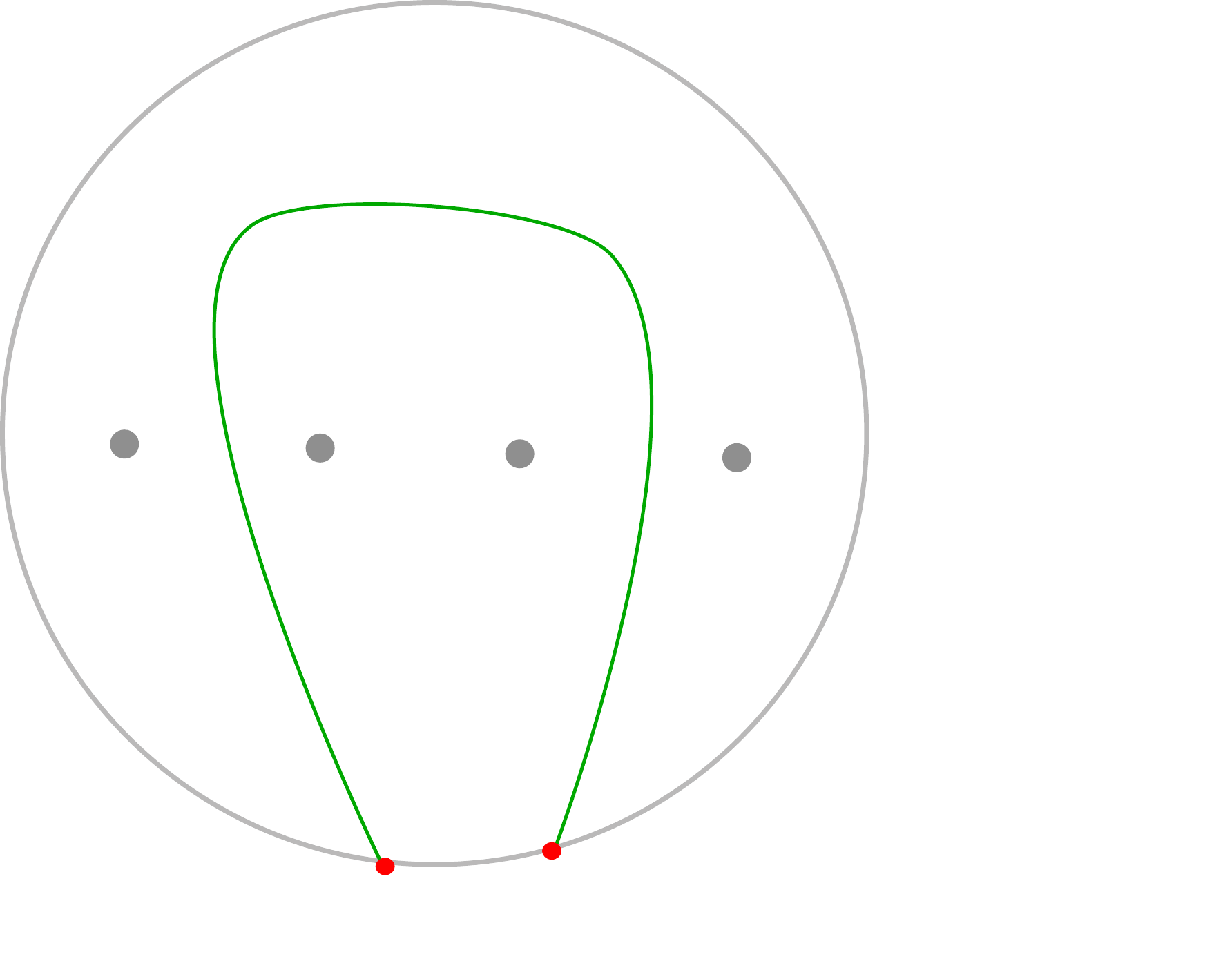
\caption{Noodle $N_{2,3}$. \label{N23}}
\end{center}
\end{figure}

We get the pairings:
\begin{itemize}
\item $\langle N_{2,3},F^{\tau}_{1,2} \rangle = (q_1 q_3)^{-1} t^{-1}$,
\item $\langle N_{2,3},F^{\tau}_{1,4} \rangle = (q_1 q_3)^{-1} t^{-1}-t^{-1}+1-q_1 q_3$,
\item $\langle N_{2,3},F^{\tau}_{2,4} \rangle = -q_1 q_3$,
\item $\langle N_{2,3},F \rangle = q_1 q_3^{-1} t^{-1}-q_1 t^{-1}+q_1-q_1 q_3=q_1 (1-q_3) (q_3^{-1} t^{-1}+1)$ .
\end{itemize}
By identification, we finally obtain:
\[
A= q_1^2-q_1 \text{ , } B= q_1 \text{ , } C= 1-q_21  .
\]
\end{example}

Proposition \ref{actioncBKL} allows computation of matrices. 
In the above proposition we only provide the action on vectors $v \otimes 1$ for $v \in H_2(C)$. To deal with permuted vectors, one can transport punctures and permute variables consistently in the expressions of the proposition. As the only variable involved in the proposition is $q_i$, we end this section by a computational approach to these matrices, that indicates how to transport it. Let $BKL_i(q,t)$ be the matrix representing the action of $\sigma_i$ in the (uncolored) Bigelow-Krammer-Lawrence representation written in the basis $\{ F_{j,k} \}$ using the lexicographic order on $(i,k)$. See \cite{Big1} Section 4 for matrices, or use fonction {\em LKB\_matrix()} from the braid package of SageMath to obtain them. We consider the ones obtained from the above proposition with $q= q_1 = \cdots = q_n$, which corresponds to \cite[Section~4]{Big1}. It has entries in $\BZ \left[ q^{\pm 1} , t^{\pm 1} \right]$.  

Then it's a basic matrix computation that verifies the following remark.

\begin{rmk}\label{relationcBKL}
Let $q_1, \ldots, q_n$ be variables. Then:
\[
BKL_i(q_{i},t)BKL_j(q_{j},t)=BKL_j(q_{j},t)BKL_i(q_{i},t) \text{ for } |i-j| \geq 2
\]
\[
BKL_{i+1}(q_{i},t)BKL_i(q_{i},t)BKL_{i+1}(q_{i+1},t)= BKL_{i}(q_{i+1},t)BKL_{i+1}(q_{i},t)BKL_i(q_{i},t) .
\]
One can check this by a matrix computation. 
\end{rmk}



Now we can define the colored BKL matrix associated to a braid. 
\begin{defn}
Let $\alpha$ be a braid having the following word decomposition in the standard generators: $\alpha = \prod_{m=1}^{k} \sigma_{i_m}^{s_m}$ where $s_m$ are signs. Let $j_m$ be the index of the  `` over " strand at the $m$'th crossing in $\alpha$, braids read from right to left. Let the matrix $cBKL(\alpha)$ associated to the braid $\alpha$ be:
\begin{equation}\label{cBKLmatrix}
cBKL(\alpha) := \prod_{m=1}^{k} BKL_{i_m}(q_{j_m},t)^{s_m} .
\end{equation}
\end{defn}

Remark \ref{relationcBKL} shows that $cBKL$ is a well defined map between the braid group and the matrix group, but it is not multiplicative. 
For pure braids, $cBKL$ becomes a homomorphism and what we get is a representation of $\PBn$:
\[
cBKL: \left\lbrace \begin{array}{ccc}
\PBn & \to & GL_{{n}\choose{2}} \left(\BZ \left[ q_1^{\pm 1},\ldots , q_n^{\pm1},t \right] \right)
 \\
\alpha & \mapsto & cBKL(\alpha).
\end{array} \right.
\]

Remark \ref{relationcBKL} is a computational proof that this is a representation, i.e. that it satisfies braid relations. From Proposition \ref{actioncBKL} we remark that it is the colored BKL representation, corresponding to the initial homological definition (Proposition \ref{actioncBKL}). One remarks that the only variable involved in the action of $\sigma_i$ in Proposition \ref{actioncBKL} corresponds to the over passing strand. Specializing all $q_i$'s to a single variable $q$ recovers the uncolored BKL-representation. 

\begin{rmk}
In Section \ref{Gassnerrep}, we have presented a construction of the Gassner representation as a generalization of the Burau representation. Namely we used the standard Burau block of matrix but one has to use the variable $t_i$ if the strand  $i$ is passing above, i.e. the coloring follows strands. Here the conclusion is the same: the colored BKL representation uses the BKL standard block but with formal variables following the index of the strands (it is clear in Formula (\ref{cBKLmatrix})). 
\end{rmk}

\section{Higher Lawrence representations}\label{lapartiereine}

Section \ref{Generalcontext} is a discussion about bases for computations of Lawrence representations. It emphasizes the importance in the choice of the basis in order to respect the Laurent polynomials structure of coefficients, and it discusses the general relation between Lawrence representations and quantum representations of braid groups. Section \ref{matricesinthebasisofCS} contains Proposition \ref{cLawaction} computing the Lawrence representation in the basis of code sequences previously introduced. Section \ref{Computationwithfourstrands} provides details of the computation in the case of four punctures, which proves Proposition \ref{cLawaction}. Section \ref{Concretecase3strands} gives explicit matrices in the case of $3$ punctures and $m=2$, and how to recover matrices from previous Section \ref{cBKL}, in this precise case. 

\subsection{Panorama: Bases and quantum relations}\label{Generalcontext}

\subsubsection{Bases and ring of coefficients}

In the proof of Proposition \ref{actioncBKL} we used Bigelow's technique to find a basis ($\{ v_{j,k}\}$) of the homology by Fox calculus computations.  In Remark \ref{thisremark} we used Bigelow's argument saying that there exists a diagonal matrix sending this basis of the homology to the forks basis. It turns out that this matrix is not invertible in $\Laurent$ but only generically when variables are specialized to complex values, so that forks are not a basis of the homology as an $\Laurent$-module (we say that it is not an {\em integral basis}), see \cite[Proposition~7.2]{Jules1} or \cite{P-P}. In fact, in \cite{Big1} as in Proposition \ref{actioncBKL} we are computing the action of the braid group on the sub-module generated by forks. 

Bases (said ``integral'') of this Lawrence homological modules are given in \cite{Jules1}, two of them are provided one is the so called {\em code sequences} basis. By expressing the change of bases from forks to code sequences, one obtains the morphism relating the representation on the entire homology to the one restricted to the forks module. This morphism is not invertible in $\Laurent$ but under generic conditions that are explicitly given in \cite[Proposition~7.2]{Jules1}. The fact that the module of forks is a strict sub-module of the entire homology as an $\Laurent$-module was first shown in \cite{P-P}.

In \cite{Jules1}, colored Lawrence representations are defined for all levels of the grading, see \cite[Lemma~6.34]{Jules1}. Homological rules are provided, so that one can compute matrices for these representations using integral bases (so to get the representation on the entire homology, defined as an $\Laurent$-module). In the present paper we used a different approach to compute actions (in Sections \ref{Gassnertoquantum} with Gassner representations and \ref{cBKL} with BKL representations), the purpose was to emphasize the appearance of Fox differential calculus in the computation of these two first levels of Lawrence representations grading. Such Fox calculus technique for computing the homology seems hard to be generalized for higher Lawrence representations ($m>2$), although it would be very interesting to obtain such a generalization.

\subsubsection{Links with quantum representations}
In \cite{JK} the authors prove that the BKL representation is isomorphic to the restriction of the braid representation on $W_{n,2}$ (see \ref{GoodsubrepKJ}) to the space of {\em highest weight vectors} denoted $Y_{n,2}$. In \cite{Jules1} graded Lawrence representations of the braid groups (over modules denoted $\Habs_r$, for $r \in \BN$) are extended to graded relative homology modules (denoted $\Hrelm_r$). These extended representations are shown to be isomorphic to quantum representation of the braid group on $W_{n,r}$, \cite[Theorem~3]{Jules1}. It generalizes Theorem \ref{Gassnerarequantum} and the result from \cite{JK} for BKL representations.

\subsection{Matrices in the basis of code sequences}\label{matricesinthebasisofCS}

In the spirit of what we did in Section \ref{cBKL}, there exists a notion of multiforks to designate elements of $\Habs_m := H_m^{lf} (C_{n,m},L_m)$, see \cite{Ito2} or \cite[Section~7.1]{Jules1} for a colored version (Notation $\Habs_m$ is taken from \cite{Jules1} with index $m$ instead of $r$). In this section we suggest a computation of matrices for higher Lawrence representations ($m \ge 2$), in the colored version but in the basis of code sequences presented in \cite[Section~3.1]{Jules1}. It constitutes a basis of homology modules, see \cite[Lemma~3.1]{Big0} for a proof, or \cite[Proposition~3.1]{Jules1} which is an extended version of the latter. The morphism sending multiforks to code sequences is given in \cite[Proposition~7.2]{Jules1}. We recall the colored version for the local system $L_m$ used in this section, involving a choice of base point for $C_{n,m}$:
\[
L_m: \bfct
\BZ \lbrack \pi_1(C_{n,m},{\bf d}) \rbrack & \to & \Laurent:= \BZ\lbrack s_i^{\pm 1} , t^{\pm 1 } \rbrack_{i=1 , \ldots , n} \\ 
{\pmb \xi} & \mapsto & \prod_i s_i^{w_i(\xi)} t^{u(\xi)}
\efct
\] 
where ${\bf d }  = \lbrace d_1 , \ldots , d_m \rbrace$ is a base point chosen so that coordinates lie on the boundary of the disk, $w_i$ is the invariant computing the total winding number of path ${\pmb \xi}$ around puncture $p_i$, $u$ is the winding number between configuration points, these invariants of loops are defined by analogy as in Section \ref{constructionandfaithfulness} in the case of $m=2$. For a more precise definition of this local system, using generators of the fundamental group, refer to \cite[Definition~2.4]{Jules1}, the correspondence between present variables and ones from \cite{Jules1} is $s_i = q^{-2\alpha_i}$ and $t=t$. In first subsection we present code sequences basis for $\Habs_m$ while in the second one we state propositions providing expressions for matrices of colored Lawrence representations in the basis of code sequences. In next subsection we will prove these propositions by performing a computation in the case of $4$ punctures.

\subsubsection{Code sequences}

\begin{defn}\label{Eznrdef}
We define the set of partitions of $n-1$ in $m$ integers as follows:
\[
\Enm = \lbrace (k_1, \ldots , k_{n-1}) \in \BN^{n-1} \text{ s.t. } \sum k_i=m  \rbrace .
\]
\end{defn}

We now define two families of topological objects indexed by $\Enm$, that will correspond to classes in $\Habs_m$.  

\begin{defn}[Code sequences diagrams]\label{codesequencesdiag}
We draw topological objects inside the punctured disk, the gray color is used to draw the punctured disk. Red arcs are going from a coordinate of the base point ${\bf d}$ of $C_{n,m}$ lying in its boundary to a dashed black arc. Dashed black arcs are oriented from left to right.  Finally the red arcs will end up going like in the following picture inside the dashed box, so that all families of red arcs are attached to the base point $\lbrace d_1, \ldots , d_m \rbrace$ of $C_{n,m}$ (here, $m'=m-k_1$), and that we will sometimes omit their ends in what follows when no confusion is possible.

\begin{equation*}
\begin{tikzpicture}[scale = 0.8]
\node (w1) at (-1,1) {};
\node (w2) at (1,1) {};
\node[gray] at (2.0,1.0) {\ldots};
\node (wn1) at (3,1) {};
\node (wn) at (5,1) {};


\draw[dashed,gray] (-5,0) -- (5,0) -- (5,-3);
\draw[gray] (-5,1.5) -- (-5,-3) -- (6,-3) node[right] {$\partial D_n$};

\node[above,red] at (0,0) {$k_1$};
\node[above,red] at (4,0) {$k_{n-1}$};

\draw[red] (-0.5,0.3) -- (-0.5,0) -- (-4.8,-3) node[below] {\small $d_m$};
\draw[red] (0.5,0.3) -- (0.5,0) -- (-4.3,-3) node[below] {\small $d_{m'}$};
\draw[red] (3.5,0.3) -- (3.5,0)-- (-3.2,-3) node[below] {\small $d_{k_{n-1}}$};
\draw[red] (4.5,0.3) -- (4.5,0)-- (-2.5,-3) node[below] {\small $d_1$};

\node[red] at (-1.2,-0.9) {$\ldots$};
\node[red] at (0.25,-0.9) {$\ldots$};
\node[red] at (2.9,-0.5) {$\ldots$};

\node[gray] at (w1)[above=5pt] {$p_1$};
\node[gray] at (w2)[above=5pt] {$p_2$};
\node[gray] at (wn1)[above=5pt] {$p_{n-1}$};
\node[gray] at (wn)[above=5pt] {$p_n$};
\foreach \n in {w1,w2,wn1,wn}
  \node at (\n)[gray,circle,fill,inner sep=3pt]{};
\end{tikzpicture}
\end{equation*}
Let $ {\bf k} = (k_1 , \ldots , k_{n-1}) \in \Enm$, we define the {\em code sequence} $U_{\bf k}=U(k_1,\ldots,k_{n-1})$ to be the followingg drawing.
\begin{equation*}
\begin{tikzpicture}
\node (w1) at (-3,0) {};
\node (w2) at (-1,0) {};
\node[gray] at (0.0,0.0) {\ldots};
\node (wn1) at (1,0) {};
\node (wn) at (3,0) {};
  
\draw[dashed] (w1) -- (w2) node[midway, above] {$k_1$};
\draw[dashed] (wn1) -- (wn) node[midway, above] {$k_{n-1}$};
%

\node[gray] at (w1)[above=5pt] {$p_1$};
\node[gray] at (w2)[above=5pt] {$p_2$};
\node[gray] at (wn1)[above=5pt] {$p_{n-1}$};
\node[gray] at (wn)[above=5pt] {$p_n$};
\foreach \n in {w1,w2,wn1,wn}
  \node at (\n)[gray,circle,fill,inner sep=3pt]{};

\draw[double,thick,red] (2,-0.1) -- (2,-2);
\draw[double,thick,red] (-2,-0.1) -- (-2,-2);

\draw[dashed,gray] (-5,-2) -- (3,-2);
\draw[dashed,gray] (3,-2) -- (3,-3);

\node[gray,circle,fill,inner sep=0.8pt] at (-4.8,-3) {};
\node[below,gray] at (-4.8,-3) {$d_m$};
\node[below=5pt,gray] at (-4.2,-3) {$\ldots$};
\node[gray,circle,fill,inner sep=0.8pt] at (-3.5,-3) {};
\node[below,gray] at (-3.5,-3) {$d_1$};

\draw[red] (-4.8,-3) -- (-2,-2);
\draw[red] (-3.5,-3) -- (2,-2);
\node[red] at (-2.3,-2.4) {$\ldots$};

\draw[gray] (-5,0) -- (-5,1);
\draw[gray] (-5,0) -- (-5,-3);
\draw[gray] (-5,-3) -- (4,-3) node[right] {$\partial D_n$};
\end{tikzpicture}
\end{equation*}
The indexes $k_i$'s stand to illustrate the fact that $k_i$ configuration points are embedded in the corresponding dashed segment, as we explain in what follows. We have attached to an indexed $k_i$ dashed arc a red arc called a {\em $(k_i)$-handle}. It is represented by a little red tube which is a simpler representation used to represent $k_i$ parallel red arcs that are called {\em handles}, in the spirit of what we did for forks. We let $\CU= \left\lbrace U(k_1, \ldots , k_{n-1}) \right\rbrace_{{\bf k} \in \Enm}$ designate the family of code sequences. The definition of these objects comes from \cite{Big0}. 
\end{defn} 

Now we explain how to assign a class in $\Habs_m$ from this diagram, by analogy of what we did for Forks in Section \ref{pairingbetweenforkandnoodles}. Let ${\bf k} \in \Enm$ and for all $i = 1 , \ldots ,n-1$, let:
\[
\phi_i : I_i \to D_n
\]
be the embedding of the dashed black arc number $i$ of $U(k_1 , \ldots , k_{n-1})$ indexed by $k_{i}$, where $I_i$ is a unit interval.
Let $\Delta^k$ be the standard (open) $k$ simplex:
\[
\Delta^k = \lbrace 0 < t_1 < \cdots < t_k < 1 \rbrace 
\]
for $k \in \BN$. It can also be thought as the configuration of $k$ points inside the unit interval so that, for all $i$, the map $\phi^{k_{i}}$:
\[
\phi^{k_{i}}: \bfct
\Delta^{k_{i}} & \to & C_{n,k_i} \\
(t_1, \ldots , t_{k_{i}} ) & \mapsto & \lbrace \phi_i(t_1) , \ldots, \phi_i(t_{k_{i}}) \rbrace
\efct
\]
is a well defined map. It is a locally finite cycle. These two last facts are detailed in \cite[Section~3.1]{Jules1}. We use handles to get a class in the local system homology as we did for forks. To get a cycle in the local system homology, one has to choose a lift of the chain to the maximal abelian cover of $C_{n,m}$ associated with the morphism $L_m$. The way to do so is using the red handles to which is canonically associated a path:
\[
{\bf h}=\lbrace h_1,\ldots,h_m \rbrace: I \to C_{n,m}
\]
joining the base point ${\bf d}$ and the $m$-chain assigned to dashed arcs. At the cover level there is a unique lift $\widehat{{\bf h}}$ of ${\bf h}$ that starts at $\widehat{{\bf d}}$. The lift of $U(k_1, \ldots , k_{n-1})$ passing by $\widehat{\bf h} (1)$ defines a cycle, so a class in $\Habs_m$, that we still call $U(k_1 , \ldots , k_{n-1})$.

The above construction of class is made in \cite[Section~3.1]{Jules1}, but in some relative homology case, which involves adding one dashed arc going to the boundary. The following is proved in \cite[Lemma~3.1]{Big0}, and rephrased in the relative homology formalism in \cite[Proposition~3.1]{Jules1}. 
\begin{prop}
The group $\Habs_m$ is a free $\Laurent$-module for which the family $\CU$ is a basis. 
\end{prop}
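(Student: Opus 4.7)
The strategy is to exhibit an explicit cellular model for $C_{n,m}$ whose open $m$-cells are in bijection with $\Enm$, and then to check that the code sequences $U(k_1,\dots,k_{n-1})$ are precisely the canonical lifts of these cells to the maximal abelian cover dictated by $L_m$.

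First I would construct a deformation retraction of $D_n$ that pushes each point vertically down towards the real axis while keeping the punctures fixed, inducing a retraction of $C_{n,m}$ onto the configuration space of $m$ unordered points lying on the union of open intervals $(p_i,p_{i+1})$ for $i=1,\dots,n-1$ (appropriately trimmed near $\partial D_n$ so that handles can be anchored). A configuration in this retract is determined by how many points lie on each interval, so the retract decomposes into open cells indexed by $\mathbf{k}=(k_1,\dots,k_{n-1})\in\Enm$; the cell for $\mathbf{k}$ is homeomorphic to $\Delta^{k_1}\times\cdots\times\Delta^{k_{n-1}}$ (exactly the product of simplices described in Definition \ref{codesequencesdiag}) and has dimension $\sum_i k_i=m$. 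Hence the retract is an $m$-dimensional CW complex whose top cells are indexed by $\Enm$.

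Second I would lift everything to the maximal abelian cover $\widetilde{C}$ associated to $L_m$. Since the retract has no cells of dimension greater than $m$, the locally finite cellular chain complex satisfies $C_{m+1}^{lf}=0$, so $\Habs_m=\ker(\partial_m\colon C_m^{lf}\to C_{m-1}^{lf})$ sits inside the free $\Laurent$-module on the lifts of the $m$-cells. The handle portion of each diagram $U(k_1,\dots,k_{n-1})$ is what fixes a distinguished lift: the path $\mathbf{h}$ joining $\mathbf{d}$ to the cell selects a single sheet of the cover, and different code sequences land in different deck-group orbits. This produces a free $\Laurent$-basis of $C_m^{lf}$ in bijection with $\Enm$, and it remains to verify that each $U(\mathbf{k})$ is an actual cycle — geometrically, the codimension-one topological faces correspond to one configuration point colliding with a puncture $p_i$, and such collisions do not occur inside $D_n$, so one expects $\partial_m$ to vanish on each $U(\mathbf{k})$.

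The main obstacle will be the last step: ensuring that $\partial_m=0$ really holds in the \emph{locally finite} complex with the twisted coefficient system $L_m$. The subtlety is that a locally finite $m$-chain may have a non-trivial boundary coming from the ``ends'' of the cell where configuration points approach the deleted punctures, and one has to show that these contributions either lie outside the chain complex of $C_{n,m}$ or cancel in a controlled way against the local-system weights. The cleanest way around this, mirroring the argument of Section \ref{pairingbetweenforkandnoodles}, is to build an explicit family of dual ``noodle-type'' classes $N(\mathbf{k}')$ indexed by $\Enm$ together with a geometric pairing $\langle N(\mathbf{k}'),U(\mathbf{k})\rangle\in\Laurent$ that is upper-triangular (with respect to the lexicographic order on $\Enm$) with monomial units of $\Laurent$ on the diagonal. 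Such a pairing would simultaneously certify that each $U(\mathbf{k})$ is a cycle, that the family $\CU$ is $\Laurent$-linearly independent, and, combined with the free-basis statement of step two, that it generates $\Habs_m$.
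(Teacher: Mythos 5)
The paper does not actually prove this proposition: it is quoted from \cite[Lemma~3.1]{Big0}, rephrased in relative-homology form in \cite[Proposition~3.1]{Jules1}, so your attempt should be measured against those arguments. Measured that way, your first step contains a genuine error. There is no deformation retraction of $C_{n,m}$ onto the space of configurations supported on the intervals $(p_i,p_{i+1})$: that target space is a disjoint union of open cells indexed by $\Enm$, hence disconnected for $n\ge 3$, while $C_{n,m}$ is connected; moreover the vertical projection of $D_n$ is not even defined at points above a puncture, and a deformation retraction of $D_n$ would not induce one of $C_{n,m}$ in any case, because distinct configuration points in the same vertical fibre collide during the homotopy. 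The cells indexed by $\Enm$ are only the \emph{top-dimensional} cells of a genuine CW (or Fox--Neuwirth type) model; the lower-dimensional cells do not disappear, so you cannot conclude that $C^{lf}_{m+1}=0$ and that $C^{lf}_m$ is free exactly on $\Enm$. A further problem is that locally finite homology is not a homotopy invariant, so even a correct homotopy equivalence of $C_{n,m}$ with an $m$-complex would not let you compute $\Habs_m$ from the cellular chain complex of that complex; one has to pass through Poincar\'e--Lefschetz duality on the open $2m$-manifold $C_{n,m}$ (identifying $\Habs_m$ with $H^m(C_{n,m};L_m)$ and showing the twisted cohomology is free and concentrated in degree $m$), or run the Mayer--Vietoris/long-exact-sequence induction of \cite{Jules1}, or Bigelow's ordering argument in \cite{Big0}.

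Your proposed repair via a dual family $N(\mathbf{k}')$ with a triangular pairing is closer to how Bigelow actually argues, but it cannot do all the work you assign to it. A pairing with dual classes is only defined on homology classes, so it cannot ``certify that each $U(\mathbf{k})$ is a cycle''; that each $U(\mathbf{k})$ is a locally finite cycle must be checked directly (it holds because the open simplices $\Delta^{k_i}$ have empty boundary as locally finite chains, their frontier lying in the punctures, which are outside $C_{n,m}$). And a triangular pairing with unit diagonal proves $\Laurent$-linear independence of $\CU$ and that it spans a free direct summand, but not that it generates $\Habs_m$: for surjectivity you still need an independent computation showing that $\Habs_m$ is free of rank $\card(\Enm)$, which is precisely the content of the duality/induction step you have skipped. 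So the overall architecture is salvageable, but as written both the freeness/rank statement and the generation statement are unproved.
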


\subsubsection{Matrices for colored Lawrence representations}

We compute the action of the braid group $\Bn$ on $\Habs_m$ in the basis of code sequences. Actually, the order of punctures $p_i$'s is of importance as they can be permuted by braids. We designate by ${\Habs_m}^{()}$ the space built from $C_{n,m}$ with punctures ordered from $1$ to $n$ ( $()$ refers to the identity permutation). For $\tau \in \Sk_n$, we designate by ${\Habs_m}^{\tau}$ the one obtained from $C_{n,m}^{\tau}$ with punctures permuted by $\tau$. As for previous part, the action of $\Bn$ is over $\bigoplus_{\tau \in \Sk_n} {\Habs_m}^{\tau}$, see the appendix in Section \ref{pureandcolored}. We need new quantum numbers to perform homology computations.

\begin{defn}\label{quantumt}
Let $i$ be a positive integer. We define the following elements of $\BZ \left[ t^{\pm 1} \right] \subset \Laurent$.
\begin{equation*}
(i)_t := (1+t+ \cdots + t^{i-1}) = \frac{1-t^i}{1-t} , \text{  } (k)_t! := \prod_{i=1}^k (i)_t, \text{ and }  {{k}\choose{l}}_t := \frac{(k)_t!}{(k-l)_t! (l)_t!} 
\end{equation*}
We also define quantum trinomials as follows:
\[
{{n}\choose{i,j,k}}_t = \frac{(n)_t}{(i)_t (j)_t (k)_t}. 
\]
\end{defn}

\begin{prop}[Colored Lawrence action]\label{cLawaction}
Let $n \in \BN$, and $(k_1 , \ldots , k_{n-1}) \in \Enm$. The action of standard generators of $\Bn$ on ${\Habs_m}^{()}$ is computed on a standard code sequence as follows:
\[
\sigma_1 \cdot U^{()}(k_1 , \ldots , k_{n-1}) = (-1)^{k_1} t^{-\frac{k_{1}\left(k_{1}-1\right)}{2}} \sum_{l=0}^{k_2} s_1^{k_1+l} {{k_1+l}\choose{k_1}}_{t^{-1}} U^{(1,2)}(k_1+l,k_2-l,k_3,\ldots , k_{n-1}), \]\[
\sigma_{n-1} \cdot U^{()}(k_1,\ldots , k_{n-1}) = (-1)^{k_{n-1}} t^{-\frac{k_{n-1}\left(k_{n-1}-1\right)}{2}} s_{n-1}^{k_{n-1}} \sum_{l=0}^{k_{n-2}} {{k_{n-1}+l}\choose{k_{n-1}}}_{t^{-1}} U^{(n-2,n-1)}(k_1,\ldots,k_{n-2}-l,k_{n-1}+l),
\]\[
\sigma_i \cdot U^{()}(k_1 , \ldots , k_{n-1}) = (-1)^{k_i} t^{-\frac{k_i\left(k_i-1\right)}{2}}  \sum_{l_1=0}^{k_{i-1}} \sum_{l_2=0}^{k_{i+1}}  s_i^{k_i+l_2} {{k_i+l_1+l_2}\choose{k_i,l_1,l_2}}_{t^{-1}}  U^{(i,i+1)}_{i;l_1,l_2} 
 \]
for $i = 2 , \ldots , n-2$, where:
\[
U^{(i,i+1)}_{i;l_1,l_2} = U^{(i,i+1)}(k_1,\ldots, k_{i-1} - l_1 , k_i+ l_1+ l_2, k_{i+1} - l_2 , \ldots , k_{n-1}).
\]
\end{prop}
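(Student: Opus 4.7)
The plan is to reduce the identity to a purely local geometric computation and then to a careful bookkeeping of windings in the local system $L_m$. A code sequence $U(k_1,\ldots,k_{n-1})$ has its configuration points confined to the dashed segments joining consecutive punctures, and the half-twist $\sigma_i$ is supported in a neighborhood of $[p_i,p_{i+1}]$. Consequently $\sigma_i\cdot U(k_1,\ldots,k_{n-1})$ only disturbs the arcs labelled $k_{i-1}$, $k_i$, $k_{i+1}$ (or just $k_1,k_2$ for $\sigma_1$ and $k_{n-2},k_{n-1}$ for $\sigma_{n-1}$). This reduces the proof to the computation performed with four punctures in Section~\ref{Computationwithfourstrands}, which already exhibits all three regimes: the two boundary cases and one interior case.

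Next, I would describe the image of $\sigma_i\cdot U$ geometrically before rewriting it in the basis $\CU$. The $k_i$ configuration points that lie on the twisted segment braid pairwise under a half-twist; counted in $L_m$, this contributes the self-interaction factor $(-1)^{k_i}t^{-k_i(k_i-1)/2}$. The $k_{i-1}$ points to the left of $p_i$ and the $k_{i+1}$ points to the right of $p_{i+1}$ are transported by the isotopy supporting $\sigma_i$; each one of them can either stay on its original arc, or slide onto the new $i$-th arc formed by the twist. Parametrizing the outcome by the number $l_1\in\{0,\ldots,k_{i-1}\}$ of left points and $l_2\in\{0,\ldots,k_{i+1}\}$ of right points that slide onto the new middle arc produces the double sum in the statement; the extreme cases $i=1$ and $i=n-1$ collapse one of the two sums and turn the quantum trinomial into a quantum binomial.

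For each $(l_1,l_2)$ one must evaluate the coefficient in the basis $\CU$. The $k_i+l_1+l_2$ points that end up on the new middle arc admit several orderings along that arc, and each reordering differs from a chosen reference by a product of interaction monomials whose $L_m$-image is a power of $t^{-1}$; summing over orderings yields exactly the quantum trinomial $\binom{k_i+l_1+l_2}{k_i,l_1,l_2}_{t^{-1}}$. The factor $s_i^{k_i+l_2}$ records the winding numbers around the puncture that (after the twist) sits in the $i$-th position: the $k_i$ twisted points each wind once, and so do the $l_2$ points that have been dragged across from the right side, while the $l_1$ points sliding from the left do not cross any puncture and contribute no $s$-factor. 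Homological rules analogous to Bigelow's handle rule used in Remark~\ref{thisremark} justify this accounting inside the maximal abelian cover.

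The main obstacle will be this winding bookkeeping: one must verify that no stray power of $s_{i\pm1}$ appears, that the sign and the $t$-exponent from the self-braiding are correctly segregated from the $t$-exponents produced by the reordering, and that the chosen lifts of the image and the basis elements to the cover are compared along compatible paths. Once this is checked in the four-puncture model with generators $\sigma_1,\sigma_2,\sigma_3$ (Section~\ref{Computationwithfourstrands}), the general formula follows by transporting the local computation to a disk neighborhood of $\{p_i,p_{i+1}\}$ inside the $n$-punctured disk, since all other arcs and their handles are left pointwise fixed.
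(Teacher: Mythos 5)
Your proposal follows essentially the same route as the paper: Proposition \ref{cLawaction} is proved there by exactly the locality argument of your first paragraph, reducing everything to the four-puncture computation of Example \ref{mainexample}, which is then carried out via the handle rule (Remark \ref{handleruless}), the breaking of dashed arcs, and the fusion of dashed arcs producing the quantum trinomials --- the same three ingredients you describe as self-braiding of the $k_i$ points, redistribution over $(l_1,l_2)$, and summation over orderings.

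One detail of your winding bookkeeping is off, and it is precisely the point you yourself flag as the main obstacle. You attribute the factor $s_i^{k_i+l_2}$ to windings around ``the puncture that (after the twist) sits in the $i$-th position''; with the paper's conventions that puncture is $p_{i+1}$, and winding around it would produce $s_{i+1}$. In Example \ref{mainexample} the handles of the $k_2$- and $l_2$-arcs are dragged once around the puncture labelled $p_2$, which $\sigma_2$ has moved to the \emph{third} position, and this is what yields $s_2^{k_2+l_2}$; in general the relevant puncture is $p_i$ itself, transported by the half-twist to position $i+1$. Since the whole content of the colored refinement is which variable $s_j$ appears (the uncolored statement being recovered by equating all the $s_j$), this identification has to be stated the right way around; the rest of your accounting (the sign, the segregation of the $t^{-k_i(k_i-1)/2}$ ribbon twist from the reordering powers of $t^{-1}$, and the collapse to a binomial in the boundary cases) matches the paper's computation.
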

\begin{proof}
As a half Dehn twist is a ``local move'' in the sense that it only involves arcs reaching the swapped two points, the computation of generators' action in all cases of punctures, is a straightforward consequence of the $4$ punctured case. We perform this computation in Example \ref{mainexample} of next section, from which it suffices to replace $2$ by $i$ in the action of $\sigma_2$ to obtain that of $\sigma_i$, and to replace $3$ by $n-1$ in the action of $\sigma_3$ to deduce that of $\sigma_{n-1}$.  
\end{proof}

The above proposition is sufficient to get matrices for colored Lawrence representations by considering the following remark. 

\begin{rmk}
The above proposition compute representation of a braid $\beta \in \Bn$ as an element of $$\Hom_{\Laurent} \left( {\Habs_m}^{()} , {\Habs_m}^{\perm(\beta)} \right).$$
To be able to write matrices for the representation of $\Bn$ on $\bigoplus_{\tau \in \Sk_n} {\Habs_m}^{\tau}$ one has to compute the action of braids on elements $U^{\tau}$ for $\tau \in \Sk_n$ instead of $U^{()}$. To do so, one has to take formulas from Proposition \ref{cLawaction} transporting variables $s_i$'s by $\tau^{-1}$, namely replacing $s_i$ by $s_{\tau^{-1}(i)}$ for all $i = 1 , \ldots , n$. 
\end{rmk}

Matrices for the (uncolored) Lawrence representation of braid groups is an immediate corollary of Proposition \ref{cLawaction}, by equalizing variables $s_i$'s to a single one. 

\begin{coro}[(uncolored) Lawrence action]\label{Lawaction}
Let $n \in \BN$, $s:=s_1=\cdots=s_n$, and $(k_1 , \ldots , k_{n-1}) \in \Enm$. The representation of $\Bn$ on ${\Habs_m}$ is given by the action of its generators on the standard code sequences basis as follows:
\[
\sigma_1 \cdot U(k_1 , \ldots , k_{n-1}) = (-1)^{k_1} t^{-\frac{k_1\left(k_1-1\right)}{2}} \sum_{l=0}^{k_2} s^{k_1+l} {{k_1+l}\choose{k_1}}_{t^{-1}} U(k_1+l,k_2-l,k_3,\ldots , k_{n-1}), \]\[
\sigma_{n-1} \cdot U(k_1,\ldots , k_{n-1}) = (-1)^{k_{n-1}} t^{-\frac{k_{n-1}\left(k_{n-1}-1\right)}{2}} s^{k_{n-1}} \sum_{l=0}^{k_{n-2}} {{k_{n-1}+l}\choose{k_{n-1}}}_{t^{-1}} U(k_1,\ldots,k_{n-2}-l,k_{n-1}+l),
\]\[
\sigma_i \cdot U(k_1 , \ldots , k_{n-1}) = (-1)^{k_i} t^{-\frac{k_i\left(k_i-1 \right)}{2}}  \sum_{l_1=0}^{k_{i-1}} \sum_{l_2=0}^{k_{i+1}}  s^{k_i+l_2} {{k_i+l_1+l_2}\choose{k_i,l_1,l_2}}_{t^{-1}}  U_{i;l_1,l_2} 
 \]
for $i = 2 , \ldots , n-2$, where:
\[
U^{(i,i+1)}_{i;l_1,l_2} = U^{(i,i+1)}(k_1,\ldots, k_{i-1} - l_1 , k_i+ l_1+ l_2, k_{i+1} - l_2 , \ldots , k_{n-1}).
\]
\end{coro}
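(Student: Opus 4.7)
The plan is to derive Corollary \ref{Lawaction} directly from Proposition \ref{cLawaction} by specializing the local system so that all puncture-variables coincide. The heart of the argument is the naturality of the Lawrence construction with respect to such specializations, so no new topological computation is required.

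First I would consider the ring homomorphism $\ev: \Laurent \to \BZ[s^{\pm 1}, t^{\pm 1}]$ sending every $s_i$ to a common variable $s$ and fixing $t$. At the level of local systems, this amounts to composing the colored $L_m$ of Section \ref{matricesinthebasisofCS} with the augmentation $\BZ^{n+1} \to \BZ^2$ that collapses the first $n$ generators to a single one, recovering exactly the uncolored local system of Equation (\ref{Z2}). After specialization, each component ${\Habs_m}^\tau$ becomes canonically identified with $\Habs_m$ independently of $\tau$, since the only dependence on the permutation in the colored setting came from the transportation rule $s_i \mapsto s_{\tau^{-1}(i)}$, which becomes trivial once all $s_i$ coincide. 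Consequently the direct sum $\bigoplus_{\tau \in \Sk_n} {\Habs_m}^\tau$ collapses and one recovers the ordinary action of $\Bn$ on the single module $\Habs_m$.

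Next I would invoke the fact that the braid action provided by Proposition \ref{cLawaction} is $\Laurent$-linear and hence commutes with the specialization $\ev$. Applying $\ev$ term by term to the three formulas of the proposition yields the corollary: the scalar factors $(-1)^{k_i} t^{-k_i(k_i-1)/2}$ and the quantum multinomials ${{k_i+l_1+l_2}\choose{k_i,l_1,l_2}}_{t^{-1}}$ involve only $t$ and are unaffected, while the monomials $s_1^{k_1+l}$, $s_{n-1}^{k_{n-1}}$, and $s_i^{k_i+l_2}$ become $s^{k_1+l}$, $s^{k_{n-1}}$, and $s^{k_i+l_2}$ respectively, and the permutation superscripts $(1,2)$, $(n-2,n-1)$, $(i,i+1)$ on the code sequences on the right-hand side disappear under the identification ${\Habs_m}^\tau \cong \Habs_m$.

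No genuine obstacle arises: the entire content of the corollary is already encoded in Proposition \ref{cLawaction}, and the argument amounts to recording the compatibility of the colored construction with the specialization of the local system, together with the collapse of the induced direct-sum representation to a single copy of $\Habs_m$.
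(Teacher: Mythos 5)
Your proposal is correct and matches the paper's treatment: the paper derives Corollary \ref{Lawaction} from Proposition \ref{cLawaction} precisely ``by equalizing variables $s_i$'s to a single one,'' which is exactly the specialization of the local system and collapse of the $\Sk_n$-indexed direct sum that you describe. You merely spell out the naturality of the construction under the augmentation $\Laurent \to \BZ[s^{\pm 1},t^{\pm 1}]$ more explicitly than the paper does.
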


Then the following remark tells one how to compute matrices for the colored version (as a representation of the pure braid group) out of matrices of Lawrence representation given in the above corollary.

\begin{rmk}[Coloring the Lawrence representation]\label{coloringLaw}
Let $L_i(s)$ be the matrix associated with $\sigma_i \in \Bn$ by the uncolored Lawrence representation over ${\Habs_m}$ written in the code sequence basis (given in the above Corollary \ref{Lawaction}). Let $\beta \in \Bn$ such that:
\[
\beta = \prod_{m=1}^{k} \sigma_{i_m}^{\epsilon_m}
\]
where $\epsilon_m$ are signs ($\pm 1$). Let $j_m$ be the index of the  ``over'' passing strand at the $m$'th crossing of $\beta$, braids read from right to left. 
Then:
\[
cL(\beta) := \prod_{m=1}^{k} L_{i_m}(s_{j_m})^{\epsilon_m}
\]
is a well defined matrix associated with $\beta$. For pure braids, it provides the colored representation of $\PBn$ on ${\Habs_m}^{()}$
\end{rmk}

\begin{rmk}
The matrices provided in this section work also for the case $m=2$, and provide matrices for the BKL representations. In this case there is a change of basis relating them to those from Proposition \ref{actioncBKL}.  This is detailed in Section \ref{Concretecase3strands} below in the case of the three strands braid group, which should allow the reader to understand the general case.
\end{rmk}

\subsection{Computation with four strands}\label{Computationwithfourstrands}

We compute the action of generators in the case of $\CB_4$. We use homological techniques developed in \cite[Section~4]{Jules1}, not reproving them. Still we give subtle examples of {\em handle rules} computation at the end of present section, in Example \ref{handleexamples}.

\subsubsection{Computation}

\begin{example}[Computation of $\CB_4$ representations]\label{mainexample}
Let $n=4$. We compute the action of $\sigma_2 \in \CB_4$ on the code sequence $U:=U(k_1,k_2,k_3)$ such that $\sum k_i = m$. 

\begin{align*}
\sigma_2 \cdot U & = (-1)^{k_2} \left(\vcenter{\hbox{
\begin{tikzpicture}[decoration={
    markings,
    mark=at position 0.5 with {\arrow{>}}}
    ]
\node (w1) at (-3,0) {};
\node (w2) at (-1,0) {};
\coordinate (a) at (-3.2,-2);
\node (wn1) at (1,0) {};
\node (wn) at (3,0) {};  
\draw[dashed] (w1) to[bend right=30] node[midway, below] {$k_1$} node[pos=0.4,above] (k1) {} (wn1);
\draw[dashed] (w2)to  node[pos=0.4] {$k_2$} node[pos=0.7] (k2) {} (wn1);
\draw[dashed] (w2) to[bend left=30] node[midway, above] {$k_3$} node[pos=0.8,above] (k3) {} (wn);
\node[gray] at (w1)[above=5pt] {$p_1$};
\node[gray] at (w2)[above=5pt] {$p_3$};
\node[gray] at (wn)[above=5pt] {$p_4$};
\foreach \n in {w1,w2,wn1,wn}
  \node at (\n)[gray,circle,fill,inner sep=3pt]{};  
\draw[double,red,thick] (k1) -- (k1|-a);
\draw[double,red,thick] (k2) arc(180:0:0.5) node[above] (k2p) {};
\draw[double,red,thick] (k2p) -- (k2p|-a);
\draw[double,red,thick] (k3) -- (k3|-a);
\end{tikzpicture}
}} \right) \\
& = (-1)^{k_2} s_2^{k_2} t^{-\frac{k_2(k_2-1)}{2}} \left(\vcenter{\hbox{
\begin{tikzpicture}[decoration={
    markings,
    mark=at position 0.5 with {\arrow{>}}}
    ]
\node (w1) at (-3,0) {};
\node (w2) at (-1,0) {};
\coordinate (a) at (-3.2,-2);
\node (wn1) at (1,0) {};
\node (wn) at (3,0) {};  
\draw[dashed] (w1) to[bend right=30] node[midway, below] {$k_1$} node[pos=0.4,above] (k1) {} (wn1);
\draw[dashed] (w2)to  node[midway, above] {$k_2$} node[midway,above] (k2) {} (wn1);
\draw[dashed] (w2) to[bend left=30] node[midway, above] {$k_3$} node[pos=0.7,above] (k3) {} (wn);
\node[gray] at (w1)[above=5pt] {$p_1$};
\node[gray] at (w2)[above=5pt] {$p_3$};
\node[gray] at (wn1)[above=5pt] {$p_2$};
\node[gray] at (wn)[above=5pt] {$p_4$};
\foreach \n in {w1,w2,wn1,wn}
  \node at (\n)[gray,circle,fill,inner sep=3pt]{};  
\draw[double,red,thick] (k1) -- (k1|-a);
\draw[double,red,thick] (k2) -- (k2|-a);
\draw[double,red,thick] (k3) -- (k3|-a);
\end{tikzpicture}
}} \right) 
\end{align*}

The coefficient $(-1)^{k_2}$ shows up for reversing the orientation of the indexed $k_2$ dashed arc (that has been reversed by the half Dehn twist $\sigma_2$) so that all dashed arcs are still oriented from left to right. The coefficient $s_2^{k_2}t^{-\frac{k_2(k_2-1)}{2}}$ stands for preserving a straight $(k_2)$-handle: after the application of $\sigma_2$, the $(k_2)$-handle runs once around puncture $p_2$, we use the {\em handle rule} introduced in \cite[Remark~4.2]{Jules1}, and recalled in Remark \ref{handleruless}, that precises the coefficient appearing while modifying a handle. This precise handle rule coefficient is given in below Example \ref{handleexamples} (i).

\begin{align*}
\sigma_2 \cdot U(k_1,k_2,k_3) & = (-1)^{k_2} s_2^{k_2} t^{-\frac{k_2(k_2-1)}{2}} \sum_{l_1=0}^{k_1} \sum_{l_2=0}^{k_3} t^{\frac{l_2(l_2-1)}{2}} \left(\vcenter{\hbox{
\begin{tikzpicture}[decoration={
    markings,
    mark=at position 0.5 with {\arrow{>}}}
    ]
\node (w1) at (-3,0) {};
\node (w2) at (-1,0) {};
\coordinate (a) at (-3.2,-2);
\node (wn1) at (1,0) {};
\node (wn) at (3,0) {};  
\draw[dashed] (w1) to node[midway, above] {$k_1-l_1$} node[midway,above] (k1) {} (w2);
\draw[dashed] (w2) to node[midway, above] {$k_2$} node[midway,above] (k2) {} (wn1);
\draw[dashed] (w2) to[bend right=50] node[pos=0.4, above] {$l_1$} node[pos=0.3,above] (l1) {} (wn1);
\draw[dashed] (w2) to[bend left=50] node[midway, above] {$l_2$} node[pos=0.7] (l2) {} (wn1);
\draw[dashed] (wn1) to node[midway, above] {$k_3-l_2$} node[pos=0.5,above] (k3) {} (wn);
\node[gray] at (w1)[above=5pt] {$p_1$};
\node[gray] at (w2)[above=5pt] {$p_3$};
\node[gray] at (wn1)[above=5pt] {$p_2$};
\node[gray] at (wn)[above=5pt] {$p_4$};
\foreach \n in {w1,w2,wn1,wn}
  \node at (\n)[gray,circle,fill,inner sep=3pt]{};  
\draw[double,red,thick] (k1) -- (k1|-a);
\draw[double,red,thick] (k2) -- (k2|-a);
\draw[double,red,thick] (l2) arc(180:0:0.5) node[above] (int) {};
\draw[double,red,thick] (int) -- (int|-a);
\draw[double,red,thick] (k3) -- (k3|-a);
\draw[double,red,thick] (l1) -- (l1|-a);
\end{tikzpicture} }} \right) \\
& = (-1)^{k_2} t^{-\frac{k_2(k_2-1)}{2}}  \sum_{l_1=0}^{k_1} \sum_{l_2=0}^{k_3} s_2^{k_2+l_2} \left(\vcenter{\hbox{
\begin{tikzpicture}[decoration={
    markings,
    mark=at position 0.5 with {\arrow{>}}}
    ]
\node (w1) at (-3,0) {};
\node (w2) at (-1,0) {};
\coordinate (a) at (-3.2,-2);
\node (wn1) at (1,0) {};
\node (wn) at (3,0) {};  
\draw[dashed] (w1) to node[midway, above] {$k_1-l_1$} node[midway,above] (k1) {} (w2);
\draw[dashed] (w2) to node[midway, above] {$k_2$} node[midway,above] (k2) {} (wn1);
\draw[dashed] (w2) to[bend right=50] node[pos=0.4, above] {$l_1$} node[pos=0.3,above] (l1) {} (wn1);
\draw[dashed] (w2) to[bend left=50] node[midway, above] {$l_2$} node[pos=0.7,above] (l2) {} (wn1);
\draw[dashed] (wn1) to node[midway, above] {$k_3-l_2$} node[pos=0.5,above] (k3) {} (wn);
\node[gray] at (w1)[above=5pt] {$p_1$};
\node[gray] at (w2)[above=5pt] {$p_3$};
\node[gray] at (wn1)[above=5pt] {$p_2$};
\node[gray] at (wn)[above=5pt] {$p_4$};
\foreach \n in {w1,w2,wn1,wn}
  \node at (\n)[gray,circle,fill,inner sep=3pt]{};  
\draw[double,red,thick] (k1) -- (k1|-a);
\draw[double,red,thick] (k2) -- (k2|-a);
\draw[double,red,thick] (k3) -- (k3|-a);
\draw[double,red,thick] (l1) -- (l1|-a);
\draw[double,red,thick] (l2) -- (l2|-a);
\end{tikzpicture} }} \right)
\end{align*}
The first equality comes from successive breaking of dashed arcs, which is a diagram rule presented in \cite[Example~4.6]{Jules1}, of indexed $(k_1)$ and $(k_3)$ dashed arcs respectively, with a detailed utilization of it presented in Example \ref{handleexamples} (ii). A coefficient $t^{\frac{l_2(l_2-1)}{2}}$ appears, see Example \ref{handleexamples} (ii). The second equality brings a coefficient $s_2^{l_2}t^{\frac{-l_2(l_2-1)}{2}}$ from the handle rule of Example \ref{handleexamples} (i). Lastly, we use successively two {\em fusions of dashed arcs}, presented in \cite[Corollary~4.10]{Jules1}, to transform middle theta (dashed) diagram into one dashed arc and to recover a code sequence, with the apparition of quantum trinomials in exchange. 
\begin{equation}\label{actionUn=4}
\sigma_2 \cdot U^{()}(k_1,k_2,k_3) = (-1)^{k_2} t^{-\frac{k_2(k_2-1)}{2}} \sum_{l_1=0}^{k_1} \sum_{l_2=0}^{k_3}  s_2^{k_2+l_2} {{k_2+l_1+l_2}\choose{k_2,l_1,l_2}}_{t^{-1}} U^{(2,3)}(k_1-l_1,k_2+ l_1+ l_2, k_3 - l_2)
\end{equation}
for $(2,3) \in \Sk_4$, where we used notations $U^{()}$ and $U^{\tau}$ to distinguish elements in ${\Habs_m}^{()}$ and ${\Habs_m}^{\tau}$ respectively. By using same part of computation (but simpler as we now deal with leftmost and rightmost generators), one can compute the action of the two other generators to obtain the following formulas.

\begin{align}\label{extremegenUn=4}
\sigma_1 \cdot U^{()}(k_1,k_2,k_3) = (-1)^{k_1} t^{-\frac{k_1(k_1-1)}{2}} \sum_{l=0}^{k_2} s_1^{k_1+l} {{k_1+l}\choose{k_1}}_{t^{-1}} U^{(1,2)}(k_1+l,k_2-l,k_3), \\
\sigma_3 \cdot U^{()}(k_1,k_2,k_3) = (-1)^{k_3} s_3^{k_3} t^{-\frac{k_3(k_3-1)}{2}} \sum_{l=0}^{k_2} {{k_3+l}\choose{k_3}}_{t^{-1}} U^{(3,4)}(k_1,k_2-l,k_3+l).
\end{align}
\end{example}

\subsubsection{Handle rule}

We recall the handle rule, and we give two subtle examples used several times in the previous computation.

\begin{rmk}[Handle rule, {\cite[Remark~4.1]{Jules1}}]\label{handleruless}
Let $B$ be a singular locally finite $r$-cycle of $C_m(C_{n,m},\BZ)$. We've seen a process to choose a lift of $B$ to the homology with local coefficients in $L_m$, using a handle which is a path joining ${\bf d}$ to $x \in B$. Let $\alpha$ and $\beta$ be two different paths joining ${\bf d}$ and $B$. Let $\widehat{B}^\alpha$ and $\widehat{B}^\beta$ be the lifts of $B$ chosen using $\alpha$ and $\beta$ respectively. By the {\em handle rule} we have the following relation in $\Habs_m$:
\begin{equation*}
\widehat{B}^\alpha = L_m(\beta \alpha^{-1}) \widehat{B}^\beta
\end{equation*}
where $L_m$ is the representation of $\pi_1(C_{n,m})$ used for the local system. This expresses how the local system coordinate of a homological class is translated after a change of handle. 
\end{rmk}

\begin{example}[Examples of handle rules]\label{handleexamples}
We provide two examples of handle rules applications in the case of a modification of a $(k)$-handle, standing for $k$ parallel handles.
\begin{itemize}
\item[(i)] A fist application of the handle rule is the following:
\[
\left(\vcenter{\hbox{ \begin{tikzpicture}[decoration={
    markings,
    mark=at position 0.5 with {\arrow{>}}}
    ]
\coordinate (w0) at (-1,0) {};
\coordinate (w1) at (1,0) {};
\coordinate (x0) at (-1,-1) {};
\coordinate (x1) at (1,-1) {};

\draw[dashed] (w0) to node[pos=0.6] (k1) {} node[pos=0.4,below] {$k$} (w1);

\draw[red,double,thick] (k1) arc(180:0:0.75) node[above] (k2) {};
\draw[red,double,thick] (k2) -- (k2|-x1);


\node[gray] at (w0)[left=5pt] {$p_i$};
\node[gray] at (w1)[right=5pt] {$p_j$};
\foreach \n in {w0,w1}
  \node at (\n)[gray,circle,fill,inner sep=3pt]{};

\end{tikzpicture} }}\right)
= s_j^{k} t^{\frac{-k(k-1)}{2}}
\left(\vcenter{\hbox{ \begin{tikzpicture}[decoration={
    markings,
    mark=at position 0.5 with {\arrow{>}}}
    ]
\coordinate (w0) at (-1,0) {};
\coordinate (w1) at (1,0) {};
\coordinate (x0) at (-1,-1) {};
\coordinate (x1) at (1,-1) {};

\draw[dashed] (w0) to node[pos=0.5,above] (k1) {} node[pos=0.5,above] {$k$} (w1);

\draw[red,double,thick] (k1) -- (k1|-x1);


\node[gray] at (w0)[left=5pt] {$p_i$};
\node[gray] at (w1)[right=5pt] {$p_j$};
\foreach \n in {w0,w1}
  \node at (\n)[gray,circle,fill,inner sep=3pt]{};

\end{tikzpicture} }}\right)
\]
where drawings are the same outside parenthesis. This is a generalization of \cite[Example~4.2]{Jules1}. One can deduce the above coefficient from it, by replacing the simple handle $\beta$ by a $(k)$-handle that is a ribbon of $k$ parallel handles. By replacing a simple strand (see \cite[Figure~3]{Jules1}) by a ribbon of $(k)$-strands running once around puncture $p_j$, there is a coefficient $s_j^{k}$ appearing for the total winding number around $p_j$. There is also a coefficient $t^{\frac{-k(k-1)}{2}}$ appearing for the twist of the ribbon necessary for the ribbon to encircle the puncture. Twisting the ribbon is assimilated to a framed Reidemeister I move, involving a ribbon effect.  
\item[(ii)] As in (i), we have:
\[
\left(\vcenter{\hbox{ \begin{tikzpicture}[decoration={
    markings,
    mark=at position 0.5 with {\arrow{>}}}
    ]
\coordinate (w0) at (-1,0) {};
\coordinate (w05) at (0.5,0) {};
\coordinate (w1) at (2,0) {};
\coordinate (x0) at (-1,-1) {};
\coordinate (x1) at (1,-1) {};

\draw[dashed] (w0) to[bend left=40] node[pos=0.7,above] (k1) {} node[pos=0.5,above] {$k$} (w1);

\draw[red,double,thick] (k1) -- (k1|-x1);


\node[gray] at (w0)[below] {$p_{i-1}$};
\node[gray] at (w05)[below] {$p_i$};
\node[gray] at (w1)[below] {$p_{i+1}$};
\foreach \n in {w0,w1,w05}
  \node at (\n)[gray,circle,fill,inner sep=3pt]{};

\end{tikzpicture} }}\right)
= t^{\frac{k(k-1)}{2}}
\left(\vcenter{\hbox{ \begin{tikzpicture}[decoration={
    markings,
    mark=at position 0.5 with {\arrow{>}}}
    ]
\coordinate (w0) at (-1,0) {};
\coordinate (w05) at (0.5,0) {};
\coordinate (w1) at (2,0) {};
\coordinate (x0) at (-1,-1) {};
\coordinate (x1) at (1,-1) {};

\draw[dashed] (w0) to[bend left=40] node[pos=0.5] (k1) {} node[pos=0.3,above] {$k$} (w1);

\draw[red,double,thick] (k1) arc(180:0:0.4) node[above] (k2) {};
\draw[red,double,thick] (k2) -- (k2|-x1);


\node[gray] at (w0)[below] {$p_{i-1}$};
\node[gray] at (w05)[below] {$p_i$};
\node[gray] at (w1)[below] {$p_{i+1}$};
\foreach \n in {w0,w1,w05}
  \node at (\n)[gray,circle,fill,inner sep=3pt]{};

\end{tikzpicture} }}\right)
\]
with the coefficient $t^{\frac{k(k-1)}{2}}$ coming from twisting the ribbon of $k$ parallel handles. So that:
\begin{align*}
\left(\vcenter{\hbox{ \begin{tikzpicture}[decoration={
    markings,
    mark=at position 0.5 with {\arrow{>}}}
    ]
\coordinate (w0) at (-1,0) {};
\coordinate (w05) at (0.5,0) {};
\coordinate (w1) at (2,0) {};
\coordinate (x0) at (-1,-1) {};
\coordinate (x1) at (1,-1) {};
\draw[dashed] (w0) to[bend left=40] node[pos=0.7,above] (k1) {} node[pos=0.5,above] {$k$} (w1);
\draw[red,double,thick] (k1) -- (k1|-x1);
\node[gray] at (w0)[below] {$p_{i-1}$};
\node[gray] at (w05)[below] {$p_i$};
\node[gray] at (w1)[below] {$p_{i+1}$};
\foreach \n in {w0,w1,w05}
  \node at (\n)[gray,circle,fill,inner sep=3pt]{};
\end{tikzpicture} }}\right)
& = \sum_{l=0}^{k} t^{\frac{l(l-1)}{2}}
\left(\vcenter{\hbox{ \begin{tikzpicture}[decoration={
    markings,
    mark=at position 0.5 with {\arrow{>}}}
    ]
\coordinate (w0) at (-1,0) {};
\coordinate (w05) at (0.5,0) {};
\coordinate (w1) at (2.3,0) {};
\coordinate (x0) at (-1,-1) {};
\coordinate (x1) at (1,-1) {};
\draw[dashed] (w0) to node[pos=0.7] (k1) {} node[pos=0.5,above] {$l$} (w05);
\draw[dashed] (w05) to node[pos=0.5,above] (k3) {} node[pos=0.6,above] {$k-l$} (w1);
\draw[red,double,thick] (k1) arc(180:0:0.4) node[above] (k2) {};
\draw[red,double,thick] (k2) -- (k2|-x1);
\draw[red,double,thick] (k3) -- (k3|-x1);
\node[gray] at (w0)[below] {$p_{i-1}$};
\node[gray] at (w05)[below] {$p_i$};
\node[gray] at (w1)[below] {$p_{i+1}$};
\foreach \n in {w0,w1,w05}
  \node at (\n)[gray,circle,fill,inner sep=3pt]{};
\end{tikzpicture} }}\right) \\
& = 
\sum_{l=0}^{k} s_i^{l}
\left(\vcenter{\hbox{ \begin{tikzpicture}[decoration={
    markings,
    mark=at position 0.5 with {\arrow{>}}}
    ]
\coordinate (w0) at (-1,0) {};
\coordinate (w05) at (0.5,0) {};
\coordinate (w1) at (2.3,0) {};
\coordinate (x0) at (-1,-1) {};
\coordinate (x1) at (1,-1) {};
\draw[dashed] (w0) to node[pos=0.5,above] (k1) {} node[pos=0.5,above] {$l$} (w05);
\draw[dashed] (w05) to node[pos=0.5,above] (k3) {} node[pos=0.6,above] {$k-l$} (w1);
\draw[red,double,thick] (k1) -- (k1|-x1);
\draw[red,double,thick] (k3) -- (k3|-x1);
\node[gray] at (w0)[below] {$p_{i-1}$};
\node[gray] at (w05)[below] {$p_i$};
\node[gray] at (w1)[below] {$p_{i+1}$};
\foreach \n in {w0,w1,w05}
  \node at (\n)[gray,circle,fill,inner sep=3pt]{};
\end{tikzpicture} }}\right) .
\end{align*}
The first equality comes from {\em breaking a dashed arc}, see \cite[Example~4.6]{Jules1}. The second one is an application of (i), so that powers of $t$ are simplified.

%
%
%
%
%
%
%
%
%
%
%
%
%
%
%
%

\end{itemize}
\end{example}

\subsection{A concrete case and relation with Section \ref{cBKL}}\label{Concretecase3strands}

We recall that there exists a family of multiforks generating a submodule of $\Habs_m$ for all $m$, and that in the case $m=2$, Bigelow's standard forks provide another basis (of the {\em multiforks submodule}). The definition of multiforks is given in \cite[Section~7.1]{Jules1}. The module generated by (multi)-forks is a strict submodule of $\Habs_m$, see Corollary 7.2 from \cite{Jules1}; this is a consequence of the fact that there exists a diagonal matrix sending the family of code sequences to that of multiforks, but with (non invertible) quantum factorials on the diagonal terms, see \cite[Corollary~7.2]{Jules1} for the precise coefficients. Nevertheless, matrices from Proposition \ref{actioncBKL} can be recovered by those from Proposition \ref{cLawaction}. In this section we study the example with $3$ punctures, we compute matrices from both set-ups and we discuss how they are related. This should help the reader understanding notations and dealing with higher cases.

From now on, $m=2$ so that we study the representation over $\Habs_2$. The braid group $\CB_3$ has two generators $\sigma_1$  and $\sigma_2$. Let $L_1$ and $L_2$ be their representations from Proposition \ref{Lawaction}, the uncolored version for Lawrence representations, matrices names are introduced in Remark \ref{coloringLaw}. Then:
\[
L_1(s,t) = \begin{pmatrix} s^2 t^{-1} & -s^2 (1+ t^{-1}) & s^2 \\ 0 & -s & s \\ 0 & 0 & 1 \end{pmatrix} \text{ and } L_2(s,t) = \begin{pmatrix} 1 & 0 & 0 \\ 1 & -s & 0 \\ 1 & -s(1+t^{-1}) & s^2 t^{-1} \end{pmatrix} .
\]
Let $BKL_1$ and $BKL_2$ the generators' representations from Proposition \ref{actioncBKL}, matrices notations from Remark \ref{relationcBKL}. Then:
\[
BKL_1(q,t)= \begin{pmatrix} q^2 t & 0 & q^2-q \\ 0 & 0 & q \\ 0 & 1 & 1-q  \end{pmatrix} \text{ and } BKL_2(q,t) = \begin{pmatrix} 0 & q & 0 \\ 1 & 1-q & 0 \\ 0 & t(q^2-q) & q^2 t  \end{pmatrix} .
\] 
First, one can check that the two following relations hold, they correspond to Remarks \ref{relationcBKL} and \ref{coloringLaw} adapted to this case.
\[
BKL_1(q_2,t) BKL_2(q_1,t) BKL_1(q_1,t) = BKL_2(q_1,t) BKL_1(q_1,t) BKL_2(q_2,t),
\] 
and
\[
L_1(s_2,t) L_2(s_1,t) L_1(s_1,t) = L_2(s_1,t) L_1(s_1,t) L_2(s_2,t).
\]
This relations should help one with computation of colored version matrices for pure braid groups elements. Then let:
\[
P:= \begin{pmatrix} 1+t & 1+t & 0 \\ 0 & 1+t & 0 \\ 0 & 1+t & 1+t \end{pmatrix},
\]
such that one can check:
\[
P^{-1} L_1(s,t) P = BKL_1 (s,t^{-1}) \text{ while } P^{-1} L_2(s,t) P = BKL_1 (s,t^{-1}).
\]
This emphasizes how to recover matrices from Proposition \ref{actioncBKL} out of those from Proposition \ref{Lawaction}, namely by the change of bases given by $P$ and with the relations between variables $q=s$ and $t=t^{-1}$ (the second one is due to different choices for winding numbers in the literature). One notices that the matrix $P$ is not diagonal as announced in \cite[Corollary~7.2]{Jules1}, for passing from code sequences to forks. This is because forks generators used in Section \ref{cBKL} (taken from \cite{Big1}) do not fit perfectly with multiforks for higher Lawrence representations, see Section~7.1 in \cite{Jules1}, and Remark \ref{forkissue} of the present paper. In the present case, the fork denoted $F_{1,3}$ in Section \ref{cBKL} has the following decomposition:
\[
F_{1,3} = F(2,0) + (1+t) F(1,1) + F(0,2)
\]
where $F(i,j)$ are standard multiforks from \cite[Section~7.1]{Jules1}. This decomposition can be computed from \cite[Example~4.5]{Jules1} and the handle rule, Remark \ref{handleruless}.

\section{Appendix: colored vs pure}\label{pureandcolored}

We have seen in Section \ref{VermaBraiding} as in Definition \ref{coloredBKLrep} that to pass to colored version for representations, one needs to associate one variable per puncture in such a way that if the punctures are permuted variables have to be transported. It seems that the first way to handle this issue is to restrict to a representation of the pure braid group for which punctures are fixed pointwise. Then by means of induced representation one can obtain a representation of the entire braid group for which generators are simpler. This section is devoted to define this induced representation and then to present an object appropriate to this set-up, the {\em colored braid groupoid}. 

\subsection{Induced representation}

Passing from a representation of $\PBn$ to one of $\Bn$ uses the concept of induced representation that we present in this section. 

\begin{defn}[Representation of the braid group]
A {\em representation of $\Bn$} is an algebra morphism:
\[
A \left[ \Bn \right] \to \End_{A}(V)
\]
where $A$ is a ring and $V$ is an $A$-module. 
\end{defn}

\begin{defn}[Induced representation from the pure braid group]\label{inducedpure}
Let $r$ be a representation of $\PBn$:
\[
r: A \left[ \PBn \right] \to \End_{A}(V)
\]
There exists a natural induced representation $Ind(r)$ of $\Bn$ over the space:
\[
Ind(V) = A \left[ \Bn \right] \otimes_{A \left[ \PBn \right]} V
\]
where the action of $\PBn$ is given by product on the left of the tensor product and by $r$ on the right. Its dimension is $n! \times \dim(V)$.

With this notation, the action of $\PBn$ on $Ind(V)$ stabilizes $1 \otimes V$, which recovers initial representation $r$. 
\end{defn}

\begin{example}
Let $\perm: \BC \lbrack \Bn \rbrack \to \BC \lbrack \Sk_n \rbrack$ be the representation that assigns to a braid the permutation it involves on punctures. It is the induced representation from the trivial representation of $\PBn$.
\end{example}

\begin{rmk}\label{CHV}
In the present paper we deal with three families of modules parametrized by the symmetric group.
\begin{itemize}
\item In Section \ref{VermaBraiding} we introduce a family $V^{\tau}$ (for $\tau \in \Sk_n$),
\item By analogy one can define $\CH^{\tau}$ to be $\CH \otimes \tau$ from Definition \ref{definitioncoloredBKL},
\item In Section \ref{lapartiereine} we introduced modules ${\Habs_m}^{\tau}$ for $\tau \in \Sk_n$ and $n \in \BN$. 
\end{itemize}
\end{rmk}

Let $X$ designates the letter $\CH$ or the letter $V$. There exists a reprensentation of the pure braid group on $X^{()}$ such that:
\[
Ind(X) = \bigoplus_{\tau \in \Sk_n} X^{\tau}.
\]

The induced representation is more convenient for computation as generators of $\Bn$ are simpler than those of $\PBn$, although vectors of the modules involves more complicated notations and formulas and the dimension is bigger. We state a remark about the faithfulness of the induced representation.

\begin{rmk}
For a matrix associated with a braid to be the identity on $Ind(X)$, it must stabilize $X^{()}$, hence be a pure braid. 
\end{rmk}

Subsection \ref{matricesforGassner} is probably the easiest to read as it concerns the Gassner representation, and the links between representations of the pure braid groups and induced representations of the braid groups. Next sections apply same protocols in construction of matrices. 

\subsection{Colored braid groupoid}

We present a point of view allowing to see the colored representations not as an induced representation but as a representation of a single object. Namely it involves the generalization of the notion of representation of groups to that of {\em representation of groupoids}. 

\begin{defn}[Groupoid]
A {\em groupoid} $G$ is a category inside which every morphism is invertible.
\end{defn}

\begin{defn}[Representation of a groupoid]
A {\em representation} of a groupoid $G$ is a functor from $G$ to the category $\Vectcat$ of vector spaces.
\end{defn}

\begin{example}\label{fundamgroupoid}
This notion of groupoid is used in topology to generalize the one of fundamental group.
\begin{itemize}
\item[(i)] The fundamental groupoid $\Pi_1(M)$ of a topological space $M$ is the groupoid whose set of objects is $M$ and whose morphisms from $x$ to $y$ are the homotopy-classes $\left[ \gamma \right]$ of continuous maps $\gamma:[0,1] \to M$ with endpoints map to $x$ and $y$ (which the homotopies are required to fix). Composition is by concatenation (and reparametrization) of representative maps.
\item[(ii)] Let $O$ be a subset of a topological space $M$, there exists a sub-groupoid of the fundamental groupoid:
\[
G=\bigcup_{\alpha \in O} G_{\alpha} \subset \Pi_1(M),
\]
it consists in the groupoid of paths having endpoints in $O$.
\item[(iii)] When $O= \lbrace x \rbrace$ is a single point, then the corresponding sub-groupoid is the fundamental group based in $x$. 
\end{itemize}
\end{example}

All the background regarding links between fundamental groupoid and topology can be found in \cite{Brown}, where one can find the correspondence between topological coverings and the fundamental groupoid. 


\begin{defn}[Colored braid groupoid]\label{coloredbraidgroupoid}
Let $N \in \BN^*$. The colored braid groupoid on $N$ strands is the groupoid whose set of objects is $\Sk_N$ and morphisms between $\tau_1$ and $\tau_2 \in \Sk_N$ are braids $\beta$ satisfying:
\[
\tau_1  \perm(\beta) = \tau_2 
\]
where $\perm$ is the morphism that sends a braid to its induced permutation.
%
\end{defn}


\begin{rmk}
The braid group is the fundamental group of a configuration space, for $x$ an arbitrary base point:
\[
\CB_N = \pi_1 (C_{n,0},x) 
\]
using Definition \ref{configspaceofthepunctureddisk0} for configuration spaces. See \cite{Bir}.
\end{rmk}



\begin{rmk}\label{generatingmorphisms}
Let $\sigma_i,i=1\ldots, N$ be the standard generators of $\CB_N$. Then the system:
\[
\sigma_i^{\alpha} : \alpha \to \perm(\sigma_i) \alpha
\] 
of morphisms for $\alpha \in \Sk_N$, provides generating morphisms of $G$.
\end{rmk}


Using the letter $X$ to designate either the letter $\CH$ or $V$ from Remark \ref{CHV}, we define a representation of the colored braid groupoid as follows. Let $\sigma_i^{\alpha}$ be a generating morphism of the colored braid category ($\alpha \in \Sk_n$), we define its representation:
\[
\bfct
X^{\alpha} & \to & X^{(i,i+1)\alpha} \\
v & \mapsto & \sigma_i \cdot v .
\efct
\]

This is a way to deal with colored representations by consideration of the colored braid groupoid.

\end{document}